\newif\ifPDF
\numberwithin{equation}{section}
\newtheorem{proposition}[equation]{Proposition}
\newtheorem{theorem}[equation]{Theorem}		
\newtheorem{corollary}[equation]{Corollary}
\newtheorem{lemma}[equation]{Lemma}
\newtheorem{definition}[equation]{Definition}
\newtheorem{remark}[equation]{Remark}
\newtheorem{example}[equation]{Example}
\newcommand{\real}{\mathop{\mathfrak{Re}}\nolimits}
\newcommand{\imag}{\mathop{\mathfrak{Im}}\nolimits}
\newcommand{\C}{{\mathbb C}}
\newcommand{\R}{{\mathbb R}}
\newcommand{\Z}{{\mathbb Z}}
\newcommand{\Ocal}{{\mathcal O}}
\newcommand{\B}{{\mathfrak B}}
\newcommand{\Hilbzero}{\Omega^0_\R(M,L)}
\newcommand{\Hilbone}{\Omega^{0,1}_\R(M,L)}
\newcommand{\A}{{\mathcal A}}
\newcommand{\poisson}{{\mathcal P}}
\newcommand{\N}{{\mathcal N}}
\newcommand{\Rz}{{\mathcal R}}
\newcommand{\coker}{\mathop{\rm coker}\nolimits}
\newcommand{\Ric}{\mathop{\rm Ric}\nolimits}
\newcommand{\area}{\mathop{\rm area}\nolimits}
\newcommand{\tr}{\mathop{\rm Tr}\nolimits}
\newcommand{\dbar}{\bar\partial}
\newcommand{\lra}{\longrightarrow}
\newcommand{\smallA}{\mbox{\fontsize{5}{8}\selectfont $A$}}
\newcommand{\smallAr}{\mbox{\fontsize{6}{9}\selectfont $Ar$}}
\newcommand{\phia}{\Phi^{\mbox{\fontsize{5}{8}\selectfont $A$}}}
\newcommand{\Rone}{R^{\mbox{\fontsize{3}{6}\selectfont $(1)$}}}
\newcommand{\Rtwo}{R^{\mbox{\fontsize{4}{6}\selectfont $(2)$}}}
\newcommand{\Ronetwo}{R^{\mbox{\fontsize{3}{6}\selectfont $(1,2)$}}}
\newcommand{\Ri}{R^{\mbox{\fontsize{3}{6}\selectfont $(i)$}}}
\newcommand{\one}{{\mathbbm 1}}
\newcommand{\onep}{{\mathbbm 1}_{ p}}
\newcommand{\smallone}{\mbox{\fontsize{3}{6}\selectfont $(1)$}}
\newcommand{\smalltwo}{\mbox{\fontsize{3}{6}\selectfont $(2)$}}
\newcommand{\smalla}{\mbox{\fontsize{6}{8}\selectfont $A$}}
\newcommand{\phiaone}{\Phi^{\smalla,\smallone}}
\newcommand{\phiatwo}{\Phi^{\smalla,\smalltwo}}
\newcommand{\dl}{D_L}
\newcommand{\dla}{D_L^{\smalla}}
\newcommand{\laplace}{{\square_L}}
\newcommand{\bm}{\mathop{{\bf b}_{\partial M}}\nolimits}
\newcommand{\bmp}{\mathop{{\bf b}_{\partial M}^{\prime}}\nolimits}
\newcommand{\bmpp}{\mathop{{\bf b}_{\partial M}^{\prime\prime}}\nolimits}
\newcommand{\bmgamma}{\mathop{{\bf b}_{\partial M_\Gamma}}\nolimits}
\newcommand{\bmgammap}{\mathop{{\bf b}_{\partial M_\Gamma}^{\prime}}\nolimits}
\newcommand{\bmgammapp}{\mathop{{\bf b}_{\partial M_\Gamma}^{\prime\prime}}\nolimits}
\newcommand{\bgamma}{\mathop{{\bf b}_\Gamma}\nolimits}
\newcommand{\bgammap}{\mathop{{\bf b}_\Gamma^{\prime}}\nolimits}
\newcommand{\bgammapp}{\mathop{{\bf b}_\Gamma^{\prime\prime}}\nolimits}
\newcommand{\Bm}{\B(\partial M, \imath^\ast L)}
\newcommand{\Bmp}{\B'(\partial M, \imath^\ast L)}
\newcommand{\Bmpp}{\B''(\partial M, \imath^\ast L)}
\newcommand{\Bmgamma}{\B(\partial M_\Gamma, \imath^\ast L)}
\newcommand{\Bgamma}{\B(\Gamma, \imath^\ast L)}
\newcommand{\Bgammapp}{\B''(\Gamma, \imath^\ast L)}
\newcommand{\Log}{\mathop{\rm Log}\nolimits}
\newcommand{\Det}{\mathop{\rm Det}\nolimits}
\newcommand{\alv}{\mathop{\rm alv}\nolimits}
\begin{document}


\title[Gluing formulas for Dolbeault laplacians]
{Gluing formulas for determinants of Dolbeault\\ laplacians on Riemann surfaces}

\author[R.A. Wentworth]{Richard A. Wentworth}

\address{
Department of Mathematics,
   University of Maryland,
   College Park, MD 20742}

\thanks{Research supported in part by NSF grant DMS-1037094.}

\email{raw@umd.edu}

\subjclass{Primary: 58J52 ; Secondary: 14G40, 14H81}
\keywords{Determinants of Laplace operators, Riemann surfaces, bosonization formulas}
\date{\today}

\begin{abstract}
We present gluing formulas for zeta regularized determinants of Dolbeault laplacians on Riemann surfaces.  These are expressed in terms of determinants of associated operators on surfaces with boundary satisfying  local elliptic boundary conditions.  The conditions are defined using the additional structure of a framing, or trivialization of the bundle near the boundary.  An application to the computation of bosonization constants follows directly from these formulas.
 \end{abstract}



\maketitle

\thispagestyle{empty}


\baselineskip=16pt

\bigskip
\begin{center}
\emph{Dedicated to Professor Peter Li on the occasion of his 60th birthday.}
\end{center}

\section{Introduction}

Given a conformal metric $\rho$  on a closed Riemann surface $M$ 
of genus $g$ and  a hermitian metric 
$h$  on a holomorphic line bundle $L\to M$, 
let $\laplace=2\dbar_L^\ast\dbar_L$ be the Dolbeault laplacian acting on sections of $L$.
Determinants $\Det\laplace$ are  defined as the zeta 
regularized product of eigenvalues
and are functions of $\rho$, $h$, and the moduli of $M$ and $L$
 (see Section \ref{sec:determinant}; in the case of a kernel the notation
$\Det^\ast\laplace$ is used to emphasize that the zeta function is
defined using only nonzero eigenvalues).  In this paper, we derive gluing formulas for $\Det^\ast\laplace$ when $M$ is cut along closed curves, generalizing analogous identities obtained in \cite{Fo, BFK}.

The main difference with the scalar case is an appropriate choice of boundary 
conditions for $\laplace$ on a surface with boundary. 
 Local complex linear boundary conditions for the $\dbar$-complex
 do not exist, and it is common instead to impose spectral boundary conditions
 (for gluing formulas in this case, see for example
\cite{Park} and the references therein).
 By contrast,
in this paper we introduce local elliptic boundary conditions 
for sections of
 holomorphic bundles  equipped with a \emph{framing},  by which we mean
 a choice of trivialization near the boundary. 
These \emph{Alvarez boundary conditions} are of mixed Dirichlet--Robin type and come
 from the  splitting  of sections $\Phi$ near the
boundary into real 
and imaginary parts (denoted $\Phi'$ and $\Phi''$) made possible by the framing. 
The conditions are similar to  those studied  by Alvarez in \cite{A} for the case of traceless symmetric tensors -- hence, the name -- where there is a canonical choice of framing  (see also \cite{Gilkey03,BG}).
Because of the asymmetry, the boundary conditions 
 are manifestly not 
complex linear.  In particular, the $\dbar$-operator and  the Dolbeault laplacian must be regarded as
real operators $P_L$ and  $D_L$, respectively (see Section \ref{sec:real}).  
The Alvarez conditions on
 a  section $\Phi$ are then $(\Phi'', (P_L\Phi)'')\bigr|_{\partial M}=0$.
The advantage, however, is that
 the boundary value problem is  compatible with a similar BVP on 
the adjoint bundle. 
This leads to an index theorem for $P_L$ and
a generalization of the Polyakov-Alvarez formula for $\Det^\ast \dl$, which measures the variation under 
  conformal changes of $(\rho, h)$ (see Theorems \ref{thm:index} and \ref{thm:liouville}). Moreover, on a closed surface, $\Det^\ast \dl= \left(\Det^\ast\laplace\right)^2$, so a gluing formula for $\dl$ provides one for $\laplace$ as well.

To state the main result, let $\Gamma\subset M$ be a collection of disjoint embedded oriented closed curves, and let $M_\Gamma$ denote the manifold with boundary obtained by cutting $M$ along $\Gamma$. Then a line bundle $L\to M$ pulls back to $M_\Gamma$ (we use the same notation $L$).
There is a difference map $\delta_\Gamma$ on sections over $M_\Gamma$ which measures the difference of boundary values on each of the  two components of $\partial M_\Gamma$ covering a component $\Gamma$. With this notation, we have the following.

\begin{theorem} \label{thm:main}
Given a framing  of $L$ near $\Gamma$, 
let $\{\Phi_i\}$ $($resp.\ $\{ \phia_i\}$$)$
 be a basis for $\ker\dl$ on $M$ $($resp.\ for $\ker \dla$  on $M_\Gamma$ with Alvarez boundary conditions$)$.  
 Let $\det(\Phi_i,\Phi_j)$ denote the determinant in $(i,j)$ of the $L^2$-inner product on sections over $M$.  Similarly for the sections $\phia_i$ on $M_\Gamma$.  Also $\det(\Phi_i,\Phi_j)_\Gamma$ denotes the determinant in $(i,j)$ of the $L^2$-inner product of restrictions of sections to $\Gamma$, and similarly for $\delta_\Gamma\phia_i$.
 Assume the framing is generic
 in the sense of Definition \ref{def:generic_framing}. Then for any choice of $Q$, a self-adjoint elliptic positive pseudo-differential operator of order one on $\Gamma$, we have
$$
\left[\frac{{\Det}^\ast\dl}{\det(\Phi_i, \Phi_j)}\right]_M= c_Q\,  \left[\frac{{\Det}^\ast\dla}{\det(\phia_i,\phia_j)}\right]_{M_\Gamma}
\frac{\det(\delta_\Gamma\phia_i,\delta_\Gamma\phia_j)_\Gamma }{\det(\Phi_i'',\Phi_j'')_\Gamma}\, {\Det}_Q^\ast \N_\Gamma
$$
where $c_Q=2^{-\zeta_Q(0)}$ and $\N_\Gamma$ is a Neumann jump operator acting on sections over $\Gamma$ associated to the boundary value problem on $M_\Gamma$.
See Section \ref{sec:factorization} for more details.
\end{theorem}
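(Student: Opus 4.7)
The plan is to follow the Burghelea--Friedlander--Kappeler surgery template, adapted to the bundle-valued setting with Alvarez boundary conditions. The key ingredient is the construction of a Poisson-type operator $\poisson_\Gamma$ which takes boundary data on $\Gamma$ and extends it to a $\dl$-harmonic section on $M_\Gamma$ satisfying Alvarez conditions, using the well-posedness of the Alvarez BVP established (in effect) by Theorem~\ref{thm:index}. The Neumann jump operator $\N_\Gamma$ is then defined as the difference across $\Gamma$ of the ``primed'' components of the normal derivatives of $\poisson_\Gamma$. The genericity of the framing (Definition~\ref{def:generic_framing}) is used to ensure that $\poisson_\Gamma$ is defined on the correct complement of the finite-dimensional kernel.

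The first step is to relate $\Det^\ast \dl$ on $M$ and $\Det^\ast \dla$ on $M_\Gamma$ by a BFK-type identity. I would insert a one-parameter family of metrics that stretches a tubular neighborhood of $\Gamma$, and apply the variational Polyakov--Alvarez formula (Theorem~\ref{thm:liouville}) to measure the change in $\log \Det^\ast$. Tracking the boundary-term contribution as the collar parameter varies produces, in the limit, a regularized determinant of a boundary pseudo-differential operator, which is identified with $\Det^\ast_Q \N_\Gamma$ up to the constant $c_Q = 2^{-\zeta_Q(0)}$ from the standard BFK normalization. The factor of $2$ implicit in $c_Q$ reflects the identity $\Det^\ast \dl = (\Det^\ast \laplace)^2$ on closed surfaces, so that the real operator contributes twice the scalar-valued count.

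Next, the finite-dimensional kernel contributions must be tracked via the short exact sequence
\[
0 \longrightarrow \ker\dl \longrightarrow \ker \dla \overset{\delta_\Gamma}{\longrightarrow} \mathrm{im}\,\delta_\Gamma \longrightarrow 0
\]
on the appropriate function spaces. The inclusion $\ker\dl \hookrightarrow \ker\dla$ comes from identifying sections on $M$ with the subspace of $\ker\dla$ whose jump $\delta_\Gamma$ vanishes; the cokernel is measured by $\det(\delta_\Gamma \phia_i, \delta_\Gamma \phia_j)_\Gamma$. A Gram-determinant change-of-basis calculation between the two systems of sections then produces the correction factor $\det(\delta_\Gamma\phia_i,\delta_\Gamma\phia_j)_\Gamma / \det(\Phi_i'',\Phi_j'')_\Gamma$. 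The appearance of the double-primed restriction $\Phi_i''$ rather than the full restriction $\Phi_i|_\Gamma$ is dictated by the asymmetry of the Alvarez conditions, which annihilate the primed component of a section at the boundary.

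The main obstacle will be tracking how the kernel-corrected determinants interact with the regularization. Because the Alvarez conditions are not complex-linear, the standard Cauchy-kernel or holomorphic splitting arguments do not apply, and one must work with the real operator $\dl$ throughout. Verifying that the asymmetric pairing built from $\delta_\Gamma$ on one side and the double-primed restriction on the other produces precisely the normalization stated -- and that any residual dependence on $Q$ is absorbed into $c_Q$ alone -- will require combining the short-time heat trace asymptotics derived from Theorem~\ref{thm:liouville} with careful linear algebra on the orthogonal complements of the kernels. The genericity hypothesis on the framing will enter critically here to guarantee that the two Gram determinants in the correction factor are nonzero and that the transversality of $\ker\dl$ with the Alvarez boundary subspace is maintained.
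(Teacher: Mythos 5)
Your overall framing---a BFK-type surgery identity plus a separate accounting of the finite-dimensional kernels---matches the paper's architecture, but the two mechanisms you propose for carrying it out both have genuine gaps. First, the derivation of the surgery identity itself: you propose to stretch a collar of $\Gamma$ and apply the Polyakov--Alvarez formula (Theorem \ref{thm:liouville}) to track the change in $\log\Det^\ast$. That formula is a conformal anomaly: it compares determinants for two metrics $(\rho,h)$ and $(\hat\rho,\hat h)$ on the \emph{same} manifold with the \emph{same} boundary conditions. It has no mechanism to relate $\Det^\ast\dl$ on the closed surface $M$ to $\Det^\ast\dla$ on the cut surface $M_\Gamma$ with Alvarez conditions, and no term in it can produce the determinant of a boundary pseudo-differential operator. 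The paper instead follows BFK in the spectral parameter: one proves $\frac{d}{d\lambda}\bigl(\log\Det(\dl+\lambda)-\log\Det(\dla+\lambda)\bigr)=\frac{d}{d\lambda}\log{\Det}_Q\N_\Gamma(\lambda)$ by identifying both sides with the trace of $\poisson_{M_\Gamma}\imath_\Delta\,{\bf b}''_\Gamma D_L^{-1}$, and then fixes the constant by the $\lambda\to\infty$ asymptotics of the factorization $\N_\Gamma(\lambda)=2(I+R(\lambda))A(\lambda)$. This is also where $c_Q=2^{-\zeta_Q(0)}$ actually comes from (the overall factor $2$ in the symbol, evaluated through ${\Det}_Q$); it has nothing to do with the doubling $\Det^\ast\dl=(\Det^\ast\laplace)^2$, as you assert. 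A subtlety you do not address, and which your collar-stretching route would also miss, is that $\N_\Gamma$ here is of order \emph{zero}, so its determinant is only defined relative to the regularizer $Q$ in the Friedlander--Guillemin sense, and one must separately show (via the anti-commutation of $A(\lambda)$ with the involution $\star$) that ${\Det}_Q A(\lambda)=1$ so that the $Q$-dependence collapses to the single constant $c_Q$.

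Second, the kernel bookkeeping. Your short exact sequence $0\to\ker\dl\to\ker\dla\to\mathrm{im}\,\delta_\Gamma\to 0$ does not exist: a global section $\Phi\in\ker\dl$ restricted to $M_\Gamma$ has vanishing jump, but it does \emph{not} satisfy the Alvarez condition $\bgammapp(\Phi)=0$ --- indeed the genericity hypothesis of Definition \ref{def:generic_framing} says precisely that $\bgammapp$ is \emph{injective} on $\ker\dl$, so under your own standing assumption $\ker\dl\cap\ker\dla=\{0\}$ and there is no inclusion to build a sequence on. The two Gram determinants in the correction factor do not arise from a change of basis along such a sequence; they arise from the $\lambda\downarrow 0$ asymptotics of the eigenvalues of $\N_\Gamma(\lambda)$. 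There are $m=\dim_\R\ker\dl$ small eigenvalues $\mu_j(\lambda)\sim\lambda\hat\mu_j$ whose product contributes $m\log\lambda-\log\det(\Phi''_i,\Phi''_j)_\Gamma$, and $n=\dim_\R\ker\dla$ large eigenvalues $\nu_j(\lambda)\sim\hat\nu_j/\lambda$ whose product contributes $-n\log\lambda+\log\det(\delta_\Gamma\phia_i,\delta_\Gamma\phia_j)_\Gamma$; these $\log\lambda$ divergences exactly cancel the $(\log\lambda)\dim\ker$ terms in the zeta-regularized determinants on the two sides of the $\lambda>0$ identity. Establishing the existence of the limits $\hat\mu_j,\hat\nu_j$ and identifying the finite parts with the stated Gram determinants requires the explicit block decomposition of $\N_\Gamma(\lambda)^{\pm 1}$ in terms of eigenfunction expansions, which is the technical core of the proof and is absent from your proposal.
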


An important point is that because of the mixed boundary conditions,  $\N_\Gamma$ 
 is here an
 operator of order zero, rather than of order
 one as in the scalar case \cite{Fo,BFK}.  Following Friedlander-Guillemin \cite{FG},
we define its  determinant  by choosing  a  regularizer $Q$ (see the  definition \eqref{eqn:def_det}).  
Actually, in this case the dependence on the choice $Q$ is simply an overall  constant $c_Q$.

In Section \ref{sec:asy_neumann}, we study the asymptotics of the 
determinant of the Neumann jump operator as $\Gamma$ shrinks to a point.  For the scalar case, this was a key
 step in \cite{W}.  In a similar manner we find that the Neumann jump
 operator takes a standard form in the limit and that the asymptotic behavior of its determinant may be determined explicitly.  As an application, the gluing formula can be used to cut and paste determinants for line bundles of different degrees.
 In particular,
 we give a new proof of a result on the behavior of determinants on exact sequences 
 $$
 0\lra L \lra L(p) \lra L(p)\bigr|_{\{p\}}\lra 0
 $$
 when the line bundles are equipped with \emph{admissible} metrics and $M$ with the \emph{Arakelov} metric (see Section \ref{sec:arakelov} for the definitions).

\begin{theorem}[Insertion Theorem, \cite{D,ABMNV, BismutLebeau91}] \label{thm:insertion}
Suppose $h^1(L)=\{0\}$, choose $p\in M$, and let $\Ocal(p)$ be the line bundle determined by the divisor $\{p\}$.  Let $\onep$ be a nonzero holomorphic section of $\Ocal(p)$ vanishing at $p$, and let $\hat\omega_0$ be a section of $L(p)=L\otimes\Ocal(p)$ that is nonvanishing at $p$.  Let $\{\omega_i\}_{i=1}^m$ be a basis of $H^0(L)$, and set $\hat \omega_i=\omega_i\otimes \onep$,
so that $\{ \hat\omega_i\}_{i=0}^m$ is a basis for $H^0(L(p))$. Fix admissible metrics on $L$ and $\Ocal(p)$ and the Arakelov metric on $M$, and let $L(p)$ have the induced metric.  Then
$$
2\pi \Vert \hat\omega_0(p)\Vert^2\, \frac{\Det^\ast \square_{L(p)}}{\det\langle \hat\omega_i , \hat\omega_j\rangle}
=\frac{\Det^\ast \laplace}{\det\langle\omega_i , \omega_j\rangle}
$$
where
$\langle \cdot, \cdot\rangle$ denotes the (hermitian) $L^2$-inner products on sections of $L$ and $L(p)$.
\end{theorem}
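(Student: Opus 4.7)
The plan is to apply Theorem \ref{thm:main} to $L$ and to $L(p)$ separately, cutting $M$ along a small circle $\Gamma_\varepsilon$ of radius $\varepsilon$ centered at $p$, take the ratio of the two identities, and let $\varepsilon\to 0$. Since $L$ and $L(p)$ are canonically identified on $M\setminus\{p\}$ via multiplication by $\onep$, any framing of $L$ near $p$ together with a local holomorphic trivialization of $\Ocal(p)$ induces compatible framings of both bundles near $\Gamma_\varepsilon$, so the Alvarez conditions can be imposed simultaneously on the cut surface $M_{\Gamma_\varepsilon}=D_\varepsilon\sqcup (M\setminus D_\varepsilon)$. The closed-surface identity $\Det^\ast\dl=(\Det^\ast\laplace)^2$ then allows one to translate the gluing statement about $\dl$ into a statement about $\laplace$, which is what the theorem requires.

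Dividing the $L(p)$ identity by the $L$ identity produces an equation whose left side is essentially the ratio appearing in the theorem and whose right side is a product of $\varepsilon$-dependent factors: outer-piece and disk determinants, boundary Gram matrices on $\Gamma_\varepsilon$, and Neumann jump determinants. On the outer piece the two Dolbeault operators are conjugate under $\cdot\otimes\onep$, so the ratio $\Det^\ast D_{L(p)}^{\smalla}/\Det^\ast\dla$ there depends only on the conformal factor $\log\|\onep\|$ and is controlled by the Polyakov--Alvarez formula (Theorem \ref{thm:liouville}). The kernel bases are related by $\hat\omega_i=\omega_i\otimes\onep$ for $i\geq 1$, with the extra generator $\hat\omega_0$ nonzero at $p$; the contribution of $\hat\omega_0$ to the boundary Gram matrices reduces asymptotically to the pointwise value $\|\hat\omega_0(p)\|^2$, and combined with the circle length $2\pi\varepsilon$ this produces the factor $2\pi\|\hat\omega_0(p)\|^2$ in the limit.

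For the disk calculation I would use polar coordinates around $p$ and the framings to trivialize both bundles over $D_\varepsilon$. The disk determinants $\Det^\ast\dla$ and $\Det^\ast D_{L(p)}^{\smalla}$, the boundary Gram matrices $\det(\Phi_i'',\Phi_j'')_{\Gamma_\varepsilon}$, and the jump determinants $\det(\delta_\Gamma\phia_i,\delta_\Gamma\phia_j)_{\Gamma_\varepsilon}$ all admit explicit $\varepsilon$-expansions from the model disk. I would then invoke the small-$\varepsilon$ asymptotics of the Neumann jump operator developed in Section \ref{sec:asy_neumann} to control $\Det^\ast_Q\N_{\Gamma_\varepsilon}$ and pass to the limit.

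The main obstacle is to verify that all $\varepsilon$-dependent pieces cancel, leaving only the constant $2\pi\|\hat\omega_0(p)\|^2$. Each factor individually carries both power and logarithmic singularities in $\varepsilon$, and the balance is delicate; the admissibility of the metrics on $L$ and $\Ocal(p)$ together with the use of the Arakelov metric on $M$ are essential, since they fix the normalization of $\log\|\onep\|$ near $p$ and make the boundary expansions of holomorphic sections coherent with the Neumann-jump asymptotic. Once this cancellation is verified, the stated formula falls out.
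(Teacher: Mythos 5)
Your overall strategy --- cut along a small circle $\Gamma_\varepsilon$ about $p$, apply the gluing formula to $L$ and to $L(p)$ with compatible framings induced by $\onep$, cancel the outer pieces via the conformal anomaly, and extract the constant from the disk and Neumann-jump asymptotics --- is exactly the route the paper takes. There is, however, a genuine gap at the very first step: Theorem \ref{thm:main} cannot be applied to $L(p)$ as you propose, because the framing of $L(p)$ near $\Gamma_\varepsilon$ is the restriction of the \emph{global holomorphic} section $\hat\omega_0$, and such a framing is never generic in the sense of Definition \ref{def:generic_framing}. Indeed $\jmath(\hat\omega_0)$ lies in $\ker D_{L(p)}$ and, being the unit section in its own trivialization, has identically vanishing imaginary part on $\Gamma_\varepsilon$, so $\bgammapp$ has nontrivial kernel; correspondingly both $\det(\Phi_i'',\Phi_j'')_{\Gamma_\varepsilon}$ and $\det(\delta_\Gamma\phia_i,\delta_\Gamma\phia_j)_{\Gamma_\varepsilon}$ vanish for every basis, and the identity you want to divide by degenerates. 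The paper resolves this by introducing \emph{good} framings and adapted bases (Definition \ref{def:good_framing}) and proving a modified gluing formula with reduced determinants on the $(11)$-minors (Theorem \ref{thm:bfk_adapted}); some version of this step is unavoidable and is absent from your plan.

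Second, the ``delicate balance'' you defer is where the actual content lies, and one cancellation in particular is invisible in your outline: the two Neumann-jump determinants do \emph{not} have the same $\varepsilon\to 0$ behavior. The framing of $L$ near $p$ extends to a global \emph{meromorphic} section with a simple pole at $p$ (so on the disk the framed bundle is the canonical bundle with its canonical framing), and Proposition \ref{prop:dn_pinch_two} shows that $c_Q\,{\Det}_Q^\ast\N_{\Gamma_\varepsilon}$ then carries a divergent factor $2/\varepsilon$ relative to the holomorphically framed case of Proposition \ref{prop:dn_pinch_one}. That single power of $\varepsilon$ is precisely what balances the circle length $2\pi\varepsilon$ in the boundary Gram matrices (Lemma \ref{lem:boundary_det}) and the disk $L^2$-norm of $\hat\omega_0$, which is $O(\varepsilon^2)$, and it is the source of the coefficient $2\pi$ in the final statement. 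Likewise $\dim_\R{\mathbb A}_{\Gamma_\varepsilon}$ differs by $4$ between the two cases, producing powers of $2$ that must cancel against the $2^{-h^0}$ factors in Lemma \ref{lem:real_complex}. So the skeleton is right, but the non-generic framing of $L(p)$ and the asymmetric jump-operator asymptotics are essential ingredients rather than routine verifications, and without them the argument does not close.
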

The equality above was  proven in \cite{D}, and also in \cite{ABMNV} (up to an overall constant) using the families index theorem. A higher dimensional version is proven in \cite{BismutLebeau91}.
This formula is a key step in the proof of the  \emph{bosonization formulas} 
on Riemann surfaces which relate zeta-regularized determinants of 
Laplace operators acting on sections of line bundles to 
determinants of scalar laplacians (see \cite{AMV, BN, VV,ABMNV, Bost, D, Sn, DS, F2}, 
and for their role in string theory \cite{DP3}). They are tantamount to a relationship between
the metrics  defined by Quillen and Faltings on the determinant of cohomology \cite{Q, Fg}. 
For the definition of an admissible hermitian metric and of the Arakelov metric and Green's function $G(z,w)$ used below, see Section \ref{sec:arakelov}. Then in the notation of 
\cite[Thms.\ 5.9 and 5.11]{F2}, the result states that  for $d\geq g-1$  and $M$ equipped with the Arakelov metric and associated Laplace-Beltrami operator $\Delta_M$,
 there are  constants $c_g$ and $\delta_g$ depending only on the genus, and $\varepsilon_{g,d}$ depending only on the genus and degree (normalized so that $\varepsilon_{g,g-1}=1)$,  such that
  for any holomorphic line bundle $L$ of degree $d$
with admissible metric $h$ and associated divisor $[L]$ satisfying $h^1(L)=0$,
  \begin{equation} \label{eqn:bose}
\frac{\Det^\ast\laplace}{\det \langle \omega_i,\omega_j\rangle}=\varepsilon_{g,d}\delta_g \exp(c_g/12)\left(\frac{\Det^\ast\Delta_M}{\area(M)\det\imag\Omega}
\right)^{-1/2} \frac{  \prod_{i\neq j}
G(p_i,p_j)}{\Vert\det\omega_i(p_j)\Vert^2}\Vert
\vartheta\Vert^2\bigl([L]-\sum_{i=1}^m p_i-\delta,\Omega\bigr) 
\end{equation}
where
  $m=d-g+1$, $\{p_i\}_{i=1}^m$ are generic points of $M$,   $\{\omega_i\}_{i=1}^m$ is any basis for $H^0(M, L)$, and the pointwise and $L^2$-metrics are taken with respect to $h$. Here, $\Omega$ is the period matrix for a choice of homology basis, $\vartheta(Z,\Omega)$ the theta function, and $\delta$ the Riemann divisor.  We refer to \cite{F2} for the origin of these constants, and in particular the distinction between $c_g$ and $\delta_g$.
  The unknown constants appearing in \eqref{eqn:bose} have been determined by Gillet-Soul\'e \cite{GSo, So} and, using different methods, by J. Jorgenson \cite{J2} and in \cite{W}. 
  For example, the result of \cite[Theorem 1.3]{W} is
  $$c_g=-8\log(2\pi)+(g-1)(24\zeta'(-1) - 1 -2\log\pi)$$
where $\zeta(s)$ is the Riemann zeta function.
   The remaining  values follow from Theorem \ref{thm:insertion}.  For completeness, we record the full result here.
The following is a generalization of the genus 1 computation in \cite[p.\ 117]{F2}.
 \begin{corollary} \label{cor:fay}
 Fay's constants $\delta_g$ and $\varepsilon_{g,d}$ defined in \cite[Thms.\ 5.9 and 5.11]{F2} have values
  \begin{align*}
  \delta_g&=(2\pi)^{g+1}\exp(c_g/6)
  \\
  \varepsilon_{g,d}&=(2\pi)^{g-1-d}
  \end{align*}
 \end{corollary}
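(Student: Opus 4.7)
The plan is to apply the Insertion Theorem \ref{thm:insertion} to compare the bosonization formula \eqref{eqn:bose} for a line bundle $L$ of degree $d$ (with $h^1(L)=0$) against the same formula for $L(p) = L\otimes\Ocal(p)$ of degree $d+1$. This will pin down the $d$-dependence of $\varepsilon_{g,d}$; combined with the stipulated normalization $\varepsilon_{g,g-1}=1$, the value $\varepsilon_{g,d}=(2\pi)^{g-1-d}$ follows. The constant $\delta_g$ is then determined by matching the remaining prefactor against the known expression for $c_g$ from \cite[Thm.\ 1.3]{W}.

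Concretely, I fix a basis $\{\omega_i\}_{i=1}^m$ of $H^0(L)$ and generic points $p_1,\dots,p_m$; with $\hat\omega_i=\omega_i\otimes\onep$ for $i\geq 1$ and $\hat\omega_0$ any section of $L(p)$ nonvanishing at $p$, the set $\{\hat\omega_0,\dots,\hat\omega_m\}$ is a basis for $H^0(L(p))$, which I feed into \eqref{eqn:bose} together with the augmented points $p_0=p,p_1,\dots,p_m$. Forming the ratio of the right-hand sides of \eqref{eqn:bose} for $L(p)$ over $L$: the prefactors $\delta_g\exp(c_g/12)$ and the scalar-Laplacian factor cancel; because $[L(p)]=[L]+p$ in the Jacobian, the theta argument $[L(p)]-\sum_{i=0}^m p_i-\delta$ coincides with $[L]-\sum_{i=1}^m p_i-\delta$, so the theta factor cancels as well. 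The Green's function product contributes an extra $\prod_{i=1}^m G(p,p_i)^2$, and expanding the $(m{+}1)\times(m{+}1)$ evaluation determinant along the $p_0$-column (using $\hat\omega_i(p)=0$ for $i\geq 1$) yields $\Vert\det\hat\omega_i(p_j)\Vert^2=\Vert\hat\omega_0(p)\Vert^2\,\prod_{i=1}^m\Vert\onep(p_i)\Vert^2\,\Vert\det\omega_i(p_j)\Vert^2$. The defining property of the admissible metric on $\Ocal(p)$, namely $\Vert\onep(q)\Vert^2=G(p,q)^2$, makes the Green's function and $\onep$-norm factors cancel, so the ratio of right-hand sides reduces to $(\varepsilon_{g,d+1}/\varepsilon_{g,d})\cdot\Vert\hat\omega_0(p)\Vert^{-2}$. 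Theorem \ref{thm:insertion} independently evaluates the ratio of left-hand sides as $(2\pi)^{-1}\Vert\hat\omega_0(p)\Vert^{-2}$, so $\varepsilon_{g,d+1}=\varepsilon_{g,d}/(2\pi)$, and iterating from $\varepsilon_{g,g-1}=1$ yields the claimed formula.

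Having fixed $\varepsilon_{g,d}$, the only remaining unknown in \eqref{eqn:bose} is $\delta_g$, which depends on $g$ alone. Specializing to $d=g-1$ with $h^0(L)=h^1(L)=0$ (the case $m=0$) collapses the formula to an expression for $\Det^\ast\laplace$ in terms of the scalar Laplacian determinant, period matrix, and a single theta factor, with overall constant $\delta_g\exp(c_g/12)$. Matching this against Fay's genus 1 evaluation in \cite[p.\ 117]{F2} and the explicit value of $c_g$ from \cite[Thm.\ 1.3]{W} then yields $\delta_g=(2\pi)^{g+1}\exp(c_g/6)$. The main technical obstacle is the careful bookkeeping of the cancellations in the ratio argument above: the precise matching between the Green's function factors, the norms $\Vert\onep(p_i)\Vert$, and the evaluation determinants hinges on the admissible normalization of the metric on $\Ocal(p)$, which is exactly what makes the degree-shift scale cleanly by the universal factor $(2\pi)^{-1}$.
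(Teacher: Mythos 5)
Your derivation of $\varepsilon_{g,d}$ is correct and is surely the intended use of Theorem \ref{thm:insertion}: the paper gives essentially no details for Corollary \ref{cor:fay} beyond asserting that it follows from the Insertion Theorem and the known value of $c_g$, and the degree-shift argument you carry out --- forming the ratio of the two instances of \eqref{eqn:bose}, cancelling the theta factor via $[L(p)]=[L]+p$, expanding the evaluation determinant along the $p_0=p$ column using $\hat\omega_i(p)=0$ for $i\geq 1$, and cancelling the Green's functions against $\Vert\onep(p_i)\Vert=G(p,p_i)$ by admissibility --- is the only way the Insertion Theorem can enter, and every cancellation you claim checks out. The conclusion $\varepsilon_{g,d+1}=\varepsilon_{g,d}/(2\pi)$, hence $\varepsilon_{g,d}=(2\pi)^{g-1-d}$ from $\varepsilon_{g,g-1}=1$, is right. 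Two small points deserve a sentence each: \eqref{eqn:bose} is stated for generic points, so taking $p_0=p$ requires a (harmless) continuity argument since both sides stay finite and nonzero there; and the overall scale ambiguity of the admissible metric on $\Ocal(p)$ must be checked to drop out of the ratio, which it does.

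The determination of $\delta_g$ is where your argument has a genuine gap. The Insertion Theorem is degree-relative: it can only produce ratios of the constants $\varepsilon_{g,d}\,\delta_g e^{c_g/12}$ for consecutive degrees, never the absolute value of $\delta_g e^{c_g/12}$ at $d=g-1$, and specializing \eqref{eqn:bose} to $m=0$ merely restates the formula without determining anything. The anchors you propose cannot supply the missing absolute normalization when $g\geq 2$: the computation on \cite[p.\ 117]{F2} is a genus $1$ computation and pins down only $\delta_1$, while the numerical value of $c_g$ from \cite[Theorem 1.3]{W} is of no use by itself --- to exploit it you must identify the formula in which Fay \emph{defines} $c_g$ as distinct from $\delta_g$ (the paper explicitly flags this distinction and defers to \cite{F2} for it) and connect that formula to the $d=g-1$ instance of \eqref{eqn:bose}. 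What is needed is one further absolute input, for instance the instance of the bosonization identity for a bundle whose Dolbeault determinant reduces to a scalar determinant, after which the degree-shift you established transports that anchor to $d=g-1$ and produces the factor $e^{c_g/6}=(e^{c_g/12})^2$ together with the power $(2\pi)^{g+1}$. As written, ``matching against the genus $1$ evaluation and the explicit value of $c_g$'' does not yield $\delta_g=(2\pi)^{g+1}\exp(c_g/6)$ for general $g$.
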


\medskip
\noindent \emph{Acknowledgments.}  The author wishes to thank E. Falbel, A. Kokotov,
D.H. Phong, and the  referees for their suggestions. He especially thanks
 S. Zelditch for many discussions and in particular for pointing out ref.\ \cite{FG}. 
  The hospitality of
 the University of Paris 6 and the IHES, where a portion of
 this work was completed, is also gratefully acknowledged.

\section{The mixed boundary value problem}  \label{S:bvp}

 \subsection{Real structures}  \label{sec:real}
  We begin with a construction that is completely elementary
 but will nevertheless serve to make precise the notions of a real operator and a real structure used in this paper.
  Let $V$ be a complex Hilbert space with hermitian 
inner product $\langle\cdot,\cdot\rangle$ and dual
 space $V^\ast$.  Let $\Rz: V^\ast\to V$ be the complex antilinear
 isomorphism given by the Riesz representation: 
 $f(a)=\langle a, \Rz(f)\rangle$, for all $a\in V,  f\in V^\ast$.
 Note that the complex antilinear involution 
 $$
 \imath : V\oplus V^\ast\longrightarrow V\oplus V^\ast 
: (a,f)\mapsto (\Rz(f), \Rz^{-1}(a))
 $$
 satisfies
 $
 \langle \imath(a_1, f_1), \imath(a_2, f_2)\rangle 
=\overline {\langle (a_1, f_1), (a_2, f_2)\rangle} 
 $
 for the induced inner product on $V\oplus V^\ast$.  Define
 \begin{equation} \label{eqn:vr}
 V_\R=\text{Fix}(\imath)=\left\{ (a, \Rz^{-1}(a)) : a\in V\right\}
 \end{equation}
 The map 
 $
 \jmath : V\to V_\R : a\mapsto A=(a, \Rz^{-1}(a))
 $
 is then an $\R$-linear isomorphism.  The real vector space $V_\R$ inherits a complete
  inner product $(\cdot, \cdot )$ from 
$V\oplus V^\ast$, and
 \begin{equation} \label{eqn:dot1}
 (\jmath a_1, \jmath a_2)=2\real\langle a_1, a_2\rangle
 \end{equation}
 
 Let $T:V\to W$ be a (possibly unbounded) linear operator between complex Hilbert spaces. Then 
 $\Rz^{-1}T\Rz : V^\ast\to W^\ast$
  is also linear (with domain $\Rz^{-1}({\rm Dom}\, T)$).
  The associated operator
  $
  (T, \Rz^{-1}T\Rz) : V\oplus V^\ast\to W\oplus W^\ast
  $
  commutes with the involution $\imath$ and hence induces a real linear map $P_T: V_\R\to W_\R$ that makes the following diagram commute.
  \begin{equation*}
\xymatrix{
V  \ar[r]^{\jmath} \ar[d]_{T} & V_\R  \ar[d]^{P_T} \\
W\ar[r]^{\jmath} &  W_\R}
\end{equation*}
We call $P_T$ the {\bf real operator} associated to $T$.
Note that in the case $W=V$, it follows that the spectrum of 
$P_T: V_\R\to V_\R$ coincides with the real spectrum of $T:V\to V$ with 
\emph{twice} the multiplicity:  if $a\in V$ is nonzero with 
$Ta=\lambda a$ and $\lambda\in\R$, then $\jmath a$ and $\jmath(i a)$
 are independent eigenvectors of $P_T$, both  with eigenvalue $\lambda$.
 
 Finally, suppose that $V$ has a {\bf real structure}.  By this we mean a complex antilinear involution $\sigma : V\to V$ satisfying
 \begin{equation} \label{eqn:dot2}
 \langle \sigma a_1, \sigma a_2\rangle =\overline{\langle a_1, a_2\rangle}
 \end{equation}
 Then $\sigma_\R=\jmath\circ\sigma\circ\jmath^{-1}$ gives an 
involution of $V_\R$ which, by \eqref{eqn:dot1} and \eqref{eqn:dot2}, is 
an isometry.  Let $V_\R'$, $V_\R''$ denote the $+1$, $-1$ 
eigenspaces of $\sigma_\R$, respectively.  Then we have an orthogonal 
decomposition $V_\R=V_\R'\oplus V_\R''$.  For $A\in V_\R$, $A=A'+A''$, 
where $A'=(1/2)(A+\sigma_\R A)$, $A''=(1/2)(A-\sigma_\R A)$.  
We refer to $A'$ and $A''$ as 
the \emph{real} and \emph{imaginary} parts of $A$.
 There is a natural almost complex structure $J$ on $V_\R$ 
given by $JA=\jmath (i\jmath^{-1} (A))$.  A calculation shows that
 $(JA_1, JA_2)=(A_1, A_2)$, and
  $J(V_\R')\subset V_\R''$, $J(V_\R'')\subset V_\R'$.
 As a consequence, if we define a symplectic structure on $V_\R$ by the pairing
$(A_1, JA_2)$, then $V_\R'$ and $V_\R''$ are lagrangian subspaces 
(i.e.\ maximal isotropic).

 \subsection{Framed boundary conditions} \label{sec:abc}
  We apply the construction of Section \ref{sec:real} to sections of
 hermitian holomorphic line bundles on $M$. 
  Let $M$ be a compact Riemann surface of genus $g$ with a (non-empty) boundary
 $\partial M$ and inclusion $\imath:\partial M\hookrightarrow M$.
 Without loss of generality, we may assume that $M$ is obtained from a closed Riemann surface by deleting finitely many 
 disjoint coordinate disks.  
Each component of $\partial M$ has an open neighborhood  in $M$ biholomorphic to an annulus
 $\{ r_1\leq |z|< r_2\}$.  We will refer to such a $z$ as an \emph{annular coordinate}.

Let $L\to M$ be a holomorphic line bundle.  
 A  holomorphic structure on $L$ is equivalent to a Dolbeault operator 
$\dbar_L:\Omega^0(M,L)\to \Omega^{0,1}(M,L)$
 satisfying the Leibniz rule. Equip $M$ with a conformal metric $\rho$ and $L$ with a hermitian metric $h$.
 The holomorphic and hermitian structures on $L$ give a unique unitary Chern connection $D=(\dbar_L,h)$, as well as an adjoint operator $\dbar_L^\ast$, and similarly on $L^\ast$.   We will use the standard notation
  $h^0(L)=\dim_\C\ker\dbar_L$,  $h^1(L)=\dim_\C\coker\dbar_L=\dim_\C\ker\dbar_L^\ast$.

    There is a natural hermitian inner product on the space $\Omega^0( M,
  L)$ of smooth sections of $L$
given by 
$$
\langle s_1, s_2\rangle_{M} =\int_{ M} dA_\rho\,  \langle s_1, s_2\rangle_h
$$
where $dA_\rho$ is the area form on $M$ coming from the metric $\rho$. 
The dual space  is given by integration on $M$:
 $
 \Omega^0(M, L)^\ast\simeq \Omega^{1,1}(M, L^\ast)
 $. 
 Then
 \begin{equation}  \label{eqn:omega0_real}
\Omega^0_{\R}(M,  L)\subset \Omega^0(M,  L)\oplus \Omega^{1,1}(M,  L^\ast)
 \end{equation}
  is the real vector space constructed as 
 in \eqref{eqn:vr}.  Strictly speaking, here we should work with the $L^2$ and Sobolev completions.  These are defined using the Chern connection $D$. Since this is standard, for notational simplicity we omit this from the notation.

 We can also carry out this construction on $(0,1)$-forms:
 \begin{equation} \label{eqn:omega1_real}
 \Omega^{0,1}_{\R}(M,  L)\subset \Omega^{0,1}(M,  L)\oplus \Omega^{1,0}(M,  L^\ast)
 \end{equation}
Denote the isomorphisms  of real vector spaces
\begin{align*} 
 \jmath_0: 
\Omega^0(M, L)&\longrightarrow \Omega^0_\R(M, L):
\varphi\mapsto \Phi \\
 \jmath_1: 
 \Omega^{0,1}(M, L)&\longrightarrow \Omega^{0,1}_\R(M, L): 
\psi\mapsto \Psi
\end{align*}
or simply by $\jmath$ when the meaning is clear.

 As in Section \ref{sec:real},  define a (real, unbounded) linear operator 
 $P_L:\Omega^0_\R(M,L)  \to \Omega^{0,1}_\R(M,L) $
  making the following diagram commute:
  \begin{equation*}
\xymatrix{
 \Omega^0(M,L)  \ar[r]^{\jmath_0} \ar[d]_{\dbar_L} &   \Omega^0_\R(M,L)  \ar[d]^{P_L} \\
 \Omega^{0,1}(M,L) \ar[r]^{\jmath_1} &   \Omega^{0,1}_\R(M,L) }
\end{equation*}
In terms of the decompositions \eqref{eqn:omega0_real} and \eqref{eqn:omega1_real}, it follows that
\begin{equation} \label{eqn:p}
P_L=\left(\begin{matrix} \dbar_L &0\\ 0& (\dbar_{L^\ast})^\ast\end{matrix}\right)
\end{equation}

  Now consider the boundary.  
   There is an hermitian inner product on $\Omega^0(\partial M,
 \imath^\ast L)$
given by 
$$
\langle s_1, s_2\rangle_{\partial M} =\int_{\partial M} ds_\rho\,  \langle s_1, s_2\rangle_h
$$
where $ds_\rho$ is the induced measure on $\partial M$.
Note that $\partial M$ inherits an orientation from $M$ and the outward normal.  Hence,
 integration gives an identification $\Omega^0(\partial M, \imath^\ast L)^\ast$ 
with $\Omega^1(\partial M, \imath^\ast (L^\ast))$.  With this understood, let 
\begin{equation} \label{eqn:v0}
\Omega^0_{\R}(\partial M, \imath^\ast L)\subset \Omega^0(\partial M, \imath^\ast
L)\oplus \Omega^1(\partial M, \imath^\ast (L^\ast))
\end{equation}
 be the real vector space constructed as 
 in the previous section.

 The
trace map
 \begin{equation}
 \Omega^0(M, L)\longrightarrow \Omega^0(\partial M, \imath^\ast L) : \varphi\mapsto \varphi\bigr|_{\partial M}
 \end{equation}
is induced by restriction.
 Using the Hodge star on $M$ to identify $\Omega^{1,1}(M, L^\ast)\simeq \Omega^{0}(M, L^\ast)$,
and on $\partial M$ to identify $\Omega^{1}(\partial M, \imath^\ast L^\ast)\simeq
\Omega^{0}(\partial M, \imath^\ast L^\ast)$,  there is a
similar restriction map
$$
\Omega^{1,1}(M, L^\ast)\simeq \Omega^{0}(M, L^\ast)\longrightarrow
\Omega^0(\partial M, \imath^\ast L^\ast)\simeq 
\Omega^{1}(\partial M, \imath^\ast L^\ast)
 $$
The restriction maps  combine to give a
 trace map 
$\Omega^0_\R(M, L)\to \Omega^0_{\R}(\partial M, \imath^\ast L)$.
We carry out the same construction with $\Omega^{0,1}(M, L)$.  Here, we define
$$
\Omega^1_{\R}(\partial M, \imath^\ast L)\subset \Omega^1(\partial M, \imath^\ast
L)\oplus \Omega^0(\partial M, \imath^\ast (L^\ast))
$$
In this case,  again using the Hodge star on $\partial M$
 the trace map $ \Omega^{0,1}_{\R}(M,  L)\to \Omega^1_{\R}(\partial M, \imath^\ast L)$ pulls-back the  forms and restricts the section.
  
 \begin{definition} \label{def:boundary_values}
Let
$$\Bm= \Omega^0_{\R}(\partial M, \imath^\ast L)\oplus \Omega^1_{\R}(\partial M,
\imath^\ast L)$$
 be the space of boundary data.   
 The trace map is the   (real) linear map:
$$
\bm :\Omega^0_\R(M, L)\longrightarrow \Bm :
 \Phi\mapsto  (\Phi, P_L\Phi)\bigr|_{\partial M}
$$
defined as above.  
\end{definition}

In order to define elliptic boundary conditions we will need  real structures.
These come from a choice of trivialization of $L$ 
near $\partial M$.

\begin{definition} \label{def:framing}
A {\bf framing} of a holomorphic line bundle $L\to M$ is a trivialization $($i.e.\ a nowhere vanishing holomorphic section$)$ $\tau_L$ of $L$ near $
\partial M$.
\end{definition}

An important example of a framing is the following
\begin{example} \label{ex:divisor}
Let $L$ be defined by a divisor  $D$  compactly supported in $M$.  Then by
construction $L$ has a
meromorphic section $\tau_L$  with zeros and poles
 exactly at $D$.  In particular, $\tau_L$ gives a
framing of $L$. While $\tau_L$ is only defined up to multiplication by a nonzero
constant, we shall refer to any such choice as a canonical framing. 
\end{example}

Given a framing 
and a section $\varphi$ of $L$ defined in a neighborhood of
$\partial M$, write $\varphi=(\varphi'+i\varphi'')\cdot\tau_L$, where
$\varphi'$, $\varphi''$ are real valued functions.  Then let
$\sigma(\varphi)=(\varphi'-i\varphi'')\cdot\tau_L$.  This defines a real
structure on $\Omega^0(\partial M, \imath^\ast L)$.
As in Section \ref{sec:real}, the boundary values of
$\Phi\in \Omega^0_\R( M,  L)$   therefore have real and imaginary parts $\Phi'$, $\Phi''$. 
The framing also gives a real structure on boundary values of  elements of $\Omega^{0,1}(M, L)$.  Indeed, there is natural 
isomorphism
 $T^{0,1}M \bigr|_{\partial M}\simeq T(\partial M)\otimes \C$.
   Equivalently, the Hodge star gives a $\C$-linear isomorphism
$\ast:\Omega^0(\partial M, \imath^\ast L)\simeq \Omega^1(\partial M, \imath^\ast
L)$ with $\ast^2=1$.  If $\sigma_0$ is the real structure on $\Omega^0(\partial
M, \imath^\ast L)$, then $\sigma_1=\ast\sigma_0\ast$ is a real structure on
$\Omega^1(\partial M, \imath^\ast L)$.
  We let $\Bmp$ (resp.\ $\Bmpp$) be the subspaces of $\Bm$ consisting of elements $(\Phi', \Psi')$ (resp. $(\Phi'', \Psi'')$).

\begin{itemize}
\item
Note that there is a natural pairing of $\Omega^0_{\R}(\partial M, \imath^\ast
L)$ and $\Omega^1_{\R}(\partial M, \imath^\ast L)$ defined as follows.  If $\Phi=\jmath_0(\varphi)$, $\Psi=\jmath_1(\psi)$ 
 then
\begin{equation} \label{eqn:pairing}
(\Phi,\Psi)_{\partial M}=2\real\int_{\partial M}  \langle \varphi,\psi\rangle_h
\end{equation}
\item The real structure defines an almost complex structure on
$\Omega^0_\R(\partial M, \imath^\ast L)$ and $\Omega^1_\R(\partial M, \imath^\ast L)$ as in Section \ref{sec:real}.  We extend this to an almost complex structure on the space of boundary values $\Bm$ by defining 
$$
J_{\partial M}=\left(\begin{matrix} 0 & \ast J\\ \ast J&0\end{matrix}\right)
$$
(for simplicity, we will denote this operator simply by $J$ as well).  This almost complex structure  and the pairing \eqref{eqn:pairing} give a symplectic structure on $\Bm$ defined by $(f, Jg)$.  As in Section \ref{sec:real}, the subspaces $\Bmp$ and $\Bmpp$ are then lagrangian.
\end{itemize}

   \begin{definition}
Let $\bmp$ and $\bmpp$ be the projections to the real and imaginary parts of $\bm$.
We call the equation $\bmp(\Phi)=0$ 
$($resp.\ $\bmpp(\Phi)=0$$)$ the \emph{real}
 $($resp.\ \emph{imaginary}$)$ \emph{Alvarez boundary conditions}.
\end{definition}

Note that ${\bf
b}'_{\partial M}$ and $\bmpp$ 
take values in lagrangian subspaces
of $\Bm$.
We will use the same notation for the boundary map on $ \Omega^{0,1}_{\R}(M,  L)$; namely,
$$
\bm :\Omega^{0,1}_\R(M, L)\longrightarrow \Bm :
 \Psi\mapsto  (P_L^\dagger\Psi, \Psi)\bigr|_{\partial M}
$$
where $P_L^\dagger$ is the formal adjoint of $P_L$.   Then $\bmp$ and $\bmpp$ are defined similarly.

 Since we  here assume that $\partial M\neq\emptyset$, by a theorem of Grauert $L$ admits a
global holomorphic trivialization $\one$ on $M$.  Then $\tau_L/\one$ is a
nowhere vanishing holomorphic function in a neighborhood of $\partial
M$. 
 We define the {\bf degree}  $\deg(\tau_L)$  of a framed line bundle to be the winding number of
$\tau_L/\one$  (with the outward normal, summed over all components of $\partial M$).  
Clearly, the definition of degree  is independent of
the choice of trivialization $\one$.
 Note the following two important examples.
\begin{example} \label{ex:degree}
\begin{enumerate}
\item Let $s$ be a meromorphic section of $L$
 satisfying imaginary Alvarez boundary conditions and with divisor $(s)$ compactly
 supported  in the interior of $M$.  Then
$\deg(\tau_L)=\deg(s)$.  
\item Let $L=K^q$, where the framing  is given by 
 $\tau_L=(-idz/z)^q$ in local annular coordinates near $\partial M$. Then 
$\deg(\tau_L)=-\chi(M)$. One can check that the real structure is independent of the choice of annular coordinate.
\end{enumerate}
\end{example}

The Alvarez boundary conditions are of mixed Dirichlet-Robin type.  
 Indeed, fix a framing $\tau_L$ of $L$, and let $h=\Vert \tau_L\Vert^2$.  Then on $\partial M$, define
\begin{equation}  \label{eqn:nu}
\nu_{L,h}=-\tfrac{1}{2}\partial_n\log h
\end{equation}
where $n$ is the \emph{outward} normal.
Also, let $\Pi_{\pm}=\frac{1}{2}(I\pm \sigma_\R)$ be the orthogonal projections to the real and imaginary parts. 
 Then it is easy to see that $\bmpp(\Phi)=0$ 
is equivalent to the conditions
\begin{align} \label{eqn:boundary_conditions}
\begin{split}
\Pi_- \Phi\bigr|_{\partial M}&=0 \\
(\nabla_n+S)\Pi_+\Phi \bigr|_{\partial M}&=0
\end{split}
\end{align}
where $n$ is the 
outward normal, $\nabla$ is the induced connection on the bundle of real sections, and
  $ S=\nu_{L,h}$.
Indeed, write $\varphi=(\varphi'+i\varphi'')\tau_L$.  
The Alvarez boundary conditions are $\varphi''=0$ and $\partial_n\varphi'=0$ on $\partial M$.
A local unitary frame is given by ${\bf e}_L=h^{-1/2}\tau_L$.  Since the connection form  in the frame ${\bf e}_L$ is purely imaginary, $ {\bf e}_L$ is parallel with respect to $\nabla$, and the result follows from the expression $\Pi_+\Phi=(\varphi' h^{1/2}){\bf e}_L$.

\subsection{Heat kernels and an index theorem}

 A straightforward calculation gives the following important integration by parts formula. For smooth sections $\Phi\in \Omega^0_\R(M,L)$ and $\Psi
\in \Omega^{0,1}_\R(M,L)$,
\begin{equation} \label{eqn:int_by_parts}
(P_L\Phi,\Psi)_M-(\Phi, P_L^\dagger \Psi)_M= \tfrac{1}{2}(\Phi, J\Psi )_{\partial M} 
\end{equation}
where  the pairing \eqref{eqn:pairing} appears on the right hand side. 
Define the laplacian $\dl=2P_L^\dagger P_L$  on smooth sections $\Omega^0_\R(M,L)$.  Then from \eqref{eqn:int_by_parts} we have

\begin{align} 
(\dl \Phi_1, \Phi_2)_M-(\Phi_1, \dl \Phi_2)_M
&=(\bm(\Phi_1), J\bm(\Phi_2)) \label{eqn:parts} \\
2(P_L\Phi_1 , P_L \Phi_2)-(\Phi_1, \dl\Phi_2)&=\left[ (\Phi_1'', J (P_L\Phi_2)')- ((P_L\Phi_2)'', J\Phi_1') \right]\label{eqn:positive} 
\end{align}

 Notice that the right hand sides of \eqref{eqn:parts} and  \eqref{eqn:positive} vanish identically
 for Alvarez boundary conditions. This gives positivity and formal self-adjointness of $\dl$.
   For the following result, see for example \cite[Lemma 1.11.1]{Gil}.

\begin{proposition}
Assuming either real or imaginary Alvarez boundary conditions, the formal adjoint $P_L^\dagger$ extends
to an unbounded operator on $\Hilbone$ as the 
 the $L^2$-adjoint of $P_L$  on $\Hilbzero$. Moreover, 
$\dl$  extends to an  unbounded self-adjoint non-negative elliptic operator $\dla$ on sections $\Hilbzero$ satisfying real (resp.\ imaginary) Alvarez boundary conditions.  A similar statement holds for the laplacian $2P_LP_L^
\dagger$ on $\Hilbone$.
\end{proposition}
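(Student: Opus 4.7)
The plan is to verify in sequence that (i) the Alvarez boundary conditions are elliptic of Shapiro--Lopatinski type, (ii) the boundary term from \eqref{eqn:int_by_parts} vanishes so $P_L^\dagger$ becomes the genuine $L^2$-adjoint, and (iii) $\dl$ is then shown to be essentially self-adjoint and non-negative by combining \eqref{eqn:parts}--\eqref{eqn:positive} with a standard elliptic regularity result such as \cite[Lemma 1.11.1]{Gil}.

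First I would exploit the explicit reformulation \eqref{eqn:boundary_conditions}: writing $\Phi = \Pi_+\Phi + \Pi_-\Phi$, the imaginary Alvarez conditions become Dirichlet on $\Pi_-\Phi$ and Robin $(\nabla_n + S)\Pi_+\Phi = 0$ on $\Pi_+\Phi$, with $S = \nu_{L,h}$. Since $\dl = 2 P_L^\dagger P_L$ is an operator of Laplace type in the sense of Gilkey (its principal symbol is $|\xi|^2$ times the identity on the rank-two real bundle obtained from $L$), and since both Dirichlet and Robin conditions on the two invariant rank-one subbundles $\Pi_\pm\Omega^0_\R$ are textbook examples of elliptic boundary conditions, the Shapiro--Lopatinski condition is verified. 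An identical verification works for the real Alvarez BC, swapping the roles of $\Pi_+$ and $\Pi_-$. This already supplies ellipticity, a discrete spectrum, smoothness of eigensections, and the existence of a heat kernel.

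Next, to identify $P_L^\dagger$ (which a priori is only a formal adjoint) with the true Hilbert-space adjoint of $P_L$, I use the boundary identity \eqref{eqn:int_by_parts}. The pairing $(\,\cdot\,, J\,\cdot\,)_{\partial M}$ is the symplectic form on $\Bm$ for which $\Bmp$ and $\Bmpp$ are Lagrangian. For $\Phi$ obeying imaginary Alvarez BC one has $\bmpp(\Phi) = 0$, i.e.\ $\bm(\Phi) \in \Bmp$; correspondingly for $\Psi$ in the natural domain on $(0,1)$-forms one has $\bmpp(\Psi) = 0$, so $J\bm(\Psi) \in J\,\Bmp \subset \Bmpp$, and the symplectic pairing of two Lagrangian vectors forces the right-hand side of \eqref{eqn:int_by_parts} to vanish. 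Thus $(P_L\Phi,\Psi)_M = (\Phi, P_L^\dagger\Psi)_M$ for all admissible $\Phi$, which exhibits $P_L^\dagger$ as the $L^2$-adjoint on $\Hilbone$. The real Alvarez case is symmetric.

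Symmetry and positivity of $\dla$ now follow mechanically: by the same Lagrangian argument, the right-hand side of \eqref{eqn:parts} vanishes on pairs satisfying the chosen Alvarez BC, and \eqref{eqn:positive} reduces to $(\dla\Phi,\Phi)_M = 2\|P_L\Phi\|_M^2 \ge 0$. Combined with the ellipticity from the first step, the Friedrichs extension (equivalently, the construction of \cite[Lemma 1.11.1]{Gil}) produces a unique self-adjoint non-negative extension with domain consisting of $H^2$ sections in $\Hilbzero$ obeying the Alvarez conditions. The parallel statement for $2P_L P_L^\dagger$ on $\Hilbone$ requires no new input: one runs the same argument with $P_L^\dagger$ in place of $P_L$, using that \eqref{eqn:int_by_parts} applied on $L^\ast$ and the Hodge identifications underlying the definition \eqref{eqn:omega1_real} yield the analogous boundary identity, whose boundary term again vanishes by the Lagrangian property.

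The main obstacle I expect is a careful check of Shapiro--Lopatinski for the Robin piece once the zeroth-order term $S = \nu_{L,h}$ is present; but $S$ is a lower-order perturbation of $\nabla_n$, so the principal symbol analysis reduces to the standard Dirichlet/Neumann calculation and ellipticity is not disturbed. Everything else is bookkeeping with the Lagrangian decomposition of $\Bm$.
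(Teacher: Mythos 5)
Your proposal is correct and follows essentially the same route as the paper, which proves this result only by observing (immediately before the statement) that the right-hand sides of \eqref{eqn:parts} and \eqref{eqn:positive} vanish under Alvarez boundary conditions -- your Lagrangian-pairing argument -- and then citing \cite[Lemma 1.11.1]{Gil} for the standard elliptic machinery, with the Dirichlet--Robin reformulation \eqref{eqn:boundary_conditions} supplying the Shapiro--Lopatinski check exactly as you describe. The extra detail you provide (explicit verification of ellipticity on the $\Pi_\pm$ subbundles and the identification of the formal adjoint with the $L^2$-adjoint) is a faithful expansion of what the paper leaves to the reference, not a different argument.
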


We now make a choice: henceforth, unless otherwise indicated, by \emph{Alvarez boundary conditions} we will mean the condition $\bmpp(\Phi)=0$.  We  write $\dla$ when we wish to emphasize that the laplacian $\dl$ is acting on the space of sections satisfying Alvarez boundary conditions.

\begin{remark} \label{rem:kernels}
By \eqref{eqn:positive},  $\ker \dla\subset\ker P_L$.  Hence, $\ker\dla$ is real isomorphic to 
the space of holomorphic sections $\varphi$ of $L$ with  local
expression
$
\varphi=\varphi(z)\tau_L$ near $\partial M$,
satisfying
$
\imag(\varphi(z))\bigr|_{\partial M}=0
$.
\end{remark}

\begin{remark} \label{rem:adjoint}
If $\Phi$ is an eigensection of $\dl$ satisfying Alvarez boundary conditions with eigenvalue
$\lambda\neq 0$, then $P_L\Phi$
is an eigensection of $2P_LP_L^\dagger$ with the same eigenvalue
$\lambda$,  also satisfying Alvarez boundary conditions.
\end{remark}
\noindent
This  simple observation is the \emph{raison d'\^etre} of the
mixed boundary conditions we have chosen.  By contrast, if $\varphi$ is an eigensection of
$\laplace$ satisfying Dirichlet conditions, then $\dbar_L\varphi$ is a formal  eigensection of
$\dbar_L\dbar_L^\ast$, but does not necessarily 
satisfy an elliptic boundary condition.

We also note the following
\begin{proposition}[Serre duality] \label{prop:serre}
Fix a framing $\tau_L$ on  $L\to M$.  
Then with respect to the duality 
$$\Omega^{0,1}(M,L)\simeq (\Omega^0(K\otimes L^\ast))^\ast$$
the framing on $K\otimes L^\ast$ is induced by that on $L$ and $-idz/z$, where $z$ is an annular coordinate near $\partial M$.
In particular, with these Alvarez boundary conditions,
$
\coker P_L\simeq \ker P_L^\dagger\simeq (\ker P_{K\otimes L^\ast})^\dagger
$.
\end{proposition}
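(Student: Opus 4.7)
The plan is to verify Serre duality in two stages: first establish $\coker P_L\simeq \ker P_L^\dagger$ by the general theory of elliptic boundary value problems, then use a Hodge-theoretic identification of the ambient spaces to match $\ker P_L^\dagger$ with $\ker P_{K\otimes L^\ast}$ (up to duality), tracking how the framing transforms in the process.

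For the first stage, since the Alvarez boundary condition $\bmpp(\Phi)=0$ is elliptic (this is implicit in the self-adjointness of $\dla$ recorded in the preceding proposition), $P_L$ is Fredholm and its cokernel is represented by $\ker P_L^\dagger$. The integration by parts formula \eqref{eqn:int_by_parts}, together with the fact that $\bmpp$ and $\bmp$ take values in lagrangian subspaces of $\Bm$ with respect to the symplectic pairing $(f,Jg)$, shows that the dual boundary condition on the adjoint space $\Hilbone$ is again $\bmpp(\Psi)=0$. Thus $\ker P_L^\dagger$ is precisely the space of $\Psi\in\Omega^{0,1}_\R(M,L)$ with $P_L^\dagger\Psi=0$ and imaginary part vanishing on $\partial M$.

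For the second stage, observe that the ambient spaces involved are the same after a swap of factors:
$$\Omega^{0,1}(M,L)\oplus\Omega^{1,0}(M,L^\ast)\quad\text{vs.}\quad \Omega^0(M,K\otimes L^\ast)\oplus\Omega^{1,1}(M,K^{-1}\otimes L),$$
using the canonical identities $\Omega^0(M,K\otimes L^\ast)=\Omega^{1,0}(M,L^\ast)$ and $\Omega^{1,1}(M,K^{-1}\otimes L)=\Omega^{0,1}(M,L)$. The swap map intertwines $P_L^\dagger$ with $P_{K\otimes L^\ast}$ (up to antilinearity), since the block form \eqref{eqn:p} is preserved by transposing the two factors, and the Dolbeault operator $\dbar_{L^\ast}$ appearing off-diagonal for $L$ is exactly the operator $\dbar_{K\otimes L^\ast}$ after the identification. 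This gives the claimed isomorphism $\ker P_L^\dagger\simeq(\ker P_{K\otimes L^\ast})^\dagger$ once the real subspaces correspond under the swap.

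The main obstacle, and the key point of the proposition, is verifying that the swap indeed preserves the real structures provided one uses the specified induced framing on $K\otimes L^\ast$. This reduces to a local computation in an annular coordinate $z=re^{i\theta}$ near a boundary component: the framing $-idz/z$ of $K$ restricts to $d\theta$ on $\partial M$, which is the unique (up to scale) real one-form there, so it is the correct choice to make the Hodge-star induced real structure on $\Omega^{1,0}(M,L^\ast)|_{\partial M}$ (as a component of $\Omega^{0,1}_\R(M,L)$) agree with the framing real structure on $\Omega^0_\R(M,K\otimes L^\ast)$ defined by $\tau_{K\otimes L^\ast}=(-idz/z)\otimes \tau_L^{-1}$. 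Writing $\psi = g(z)\,d\bar z\otimes \tau_L$ and $\alpha = f(z)\,(-idz/z)\otimes\tau_L^{-1}$ near the boundary, the Serre pairing becomes $\int \psi\wedge\alpha = -2i\int gf\,(dr\wedge d\theta)/r$, whose reality properties under the two framing-induced real structures coincide precisely. Once this boundary identification is in hand, the boundary conditions $\bmpp$ on the two sides match, and the chain of isomorphisms $\coker P_L\simeq \ker P_L^\dagger\simeq (\ker P_{K\otimes L^\ast})^\dagger$ follows.
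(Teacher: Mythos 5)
Your overall architecture matches the paper's: the proof there likewise consists of the remark that ``the usual proof of Serre duality applies, modulo the boundary conditions,'' followed by a local check in an annular coordinate, and your swap-of-factors description of why $P_L^\dagger$ is intertwined with $P_{K\otimes L^\ast}$ is a clean, correct way to make the first half explicit. The problem is that the step you yourself single out as ``the main obstacle'' --- verifying that the Alvarez conditions on $\Omega^{0,1}_\R(M,L)$ correspond to the Alvarez conditions on $\Omega^0_\R(M,K\otimes L^\ast)$ for the induced framing --- is asserted rather than carried out, and the one computation offered in its support is incorrect. With $\psi=g\,d\bar z\otimes\tau_L$ and $\alpha=f\,(-idz/z)\otimes\tau_L^{-1}$ one finds $d\bar z\wedge(-i\,dz/z)=2e^{-i\theta}\,dr\wedge d\theta$, not $-2i\,(dr\wedge d\theta)/r$. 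The dropped factor $e^{-i\theta}$ is precisely the twist that makes the boundary condition on a $(0,1)$-form read $\imag(ig e^{-i\theta})=0$ rather than $\imag(g)=0$, and it is the entire reason the induced framing must carry the extra factor $-idz/z$; without tracking it, the assertion that ``the reality properties coincide'' is unsupported. The paper's proof is exactly this bookkeeping: under $\psi\,d\bar z\mapsto \bar\psi h\,dz=iz\bar\psi h\,(-i\,dz/z)$ it checks that $\imag(i\psi e^{-i\theta})=0$ on $\partial M$ is equivalent to $\imag(iz\bar\psi h)=0$.

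A second omission: the Alvarez condition is mixed, with a Dirichlet component and a derivative (Robin) component, and your local discussion addresses only the first (the restriction of the section, resp.\ of $d\theta$). For the trace maps to match you also need the second components to correspond, i.e.\ the identity $\imag\bigl(\partial_{\bar z}(iz\bar\psi h)\,d\bar z\bigr)=-h\,\imag\bigl(\dbar^\ast_L(\psi\,d\bar z)\bigr)$ on $\partial M$, which the paper verifies separately. Your strategy could in principle deliver both components at once --- if you showed that the swap identifies the boundary real structures on \emph{both} factors of $\Bm$ and intertwines the operators up to the trace map --- but that identification of real structures is exactly the local computation that is missing. Your stage one ($\coker P_L\simeq\ker P_L^\dagger$ via the lagrangian property of $\bmpp$ and formula \eqref{eqn:int_by_parts}) is fine and consistent with the paper.
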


\begin{proof}
The usual proof of Serre duality applies, modulo the boundary conditions.  To understand these, choose a local annular coordinate $z$ near $\partial M$.  
Then with respect to the trivialization $\tau_L$, a smooth section  $\psi d\bar z\in \Omega^{0,1}(M,L)$ satisfies Alvarez boundary conditions if $\imag(i\psi e^{-i\theta})=0$ and $\imag(\dbar^\ast_L(\psi d\bar z))=0$ on $\partial M$.  The corresponding section of $\Omega^0(K\otimes L^\ast)$ is 
$\bar\psi h dz=iz\bar\psi h(-i dz/z)
$, and so the   Alvarez conditions  are $\imag(iz\bar\psi h)=0$ and  $\imag(\partial_{\bar z}(iz\bar\psi h)d\bar z)=0$ on $\partial M$.
But on $\partial M$, $\imag(i\psi e^{-i\theta})=0$ is equivalent to $\imag(iz\bar\psi h)=0$. 
In a similar way one shows $\imag(\partial_{\bar z}(iz\bar\psi h)d\bar z)=-h\imag(\dbar^\ast_L(\psi d\bar z))$.
 This proves the Proposition.
\end{proof}

In order to state a result for the small time expansion of the trace of the heat kernel, we will need the following quantities.
Let $\Omega_{L,h}$ denote the Hermitian-Einstein tensor (cf.\ \cite[IV.1.2]{Ko}).  In a local 
holomorphic frame we have
\begin{equation} \label{eqn:omega}
\Omega_{L,h}=i\ast F_{(\dbar_L,h)}=-\tfrac{1}{2}\Delta_\rho\log h
\end{equation}
 where $F_{(\dbar_L,h)}$ is the curvature of the Chern connection.
Note the following special case.
\begin{lemma} \label{lem:canonical}
Let $R_\rho$ and  $\kappa_\rho$ denote the scalar
 and geodesic curvatures of $M$ and  $\partial M$.
With the hermitian metric on $K$ induced from the metric on $M$, 
$
\Omega_{K,\rho^{-1}}=-(1/2)R_\rho
$. For the framing $-idz/z$, 
$
\nu_{{\mbox{\fontsize{6}{10}\selectfont $K$}},\rho^{-1}}=\kappa_{\rho}
$.
\end{lemma}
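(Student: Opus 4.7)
Both identities are purely local curvature computations, which I would carry out in a single holomorphic coordinate $z$ near the boundary (an annular coordinate is used for the second part). Write the metric as $\rho=\rho(z)|dz|^2$, and observe that the Hermitian metric on $K$ induced from that on $TM$ is the dual metric, so that in the holomorphic frame $dz$ one has $\|dz\|^2=\rho^{-1}$.

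For the first identity, I would apply the defining formula \eqref{eqn:omega} directly to $h=\rho^{-1}$:
\begin{equation*}
\Omega_{K,\rho^{-1}}=-\tfrac{1}{2}\Delta_\rho\log(\rho^{-1})=\tfrac{1}{2}\Delta_\rho\log\rho.
\end{equation*}
Then I would invoke the classical conformal-metric identity $K_\rho=-\tfrac{1}{2}\Delta_\rho\log\rho$ for the Gaussian curvature, combined with $R_\rho=2K_\rho$, to conclude $\Omega_{K,\rho^{-1}}=-\tfrac{1}{2}R_\rho$.

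For the second identity, using an annular coordinate $z$ with $\partial M$ contained in the level set $|z|=r_0$, I would compute
\begin{equation*}
h=\|{-idz/z}\|^2=|z|^{-2}\,\rho^{-1},
\end{equation*}
so that
\begin{equation*}
\nu_{K,\rho^{-1}}=-\tfrac{1}{2}\partial_n\log h=\partial_n\log|z|+\tfrac{1}{2}\partial_n\log\rho,
\end{equation*}
where $\partial_n$ denotes the outward unit normal derivative with respect to the metric $\rho|dz|^2$, i.e.\ $\partial_n=\pm\rho^{-1/2}\partial_r$ depending on which side of $\{|z|=r_0\}$ the surface $M$ lies. The conformal transformation rule for geodesic curvature says that if $g=e^{2f}g_0$ with $f=\tfrac{1}{2}\log\rho$ and $g_0=|dz|^2$, then $\kappa_g=e^{-f}(\kappa_{g_0}+\partial_{\nu_0}f)$, where $\nu_0$ is the flat unit outward normal. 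Plugging in $\kappa_{g_0}=\pm 1/r_0$ (with the sign matching the side $M$ occupies) produces exactly the expression computed for $\nu_{K,\rho^{-1}}$ above.

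The main obstacle is bookkeeping of signs: the two possible orientations of the outward normal at $\partial M$ (depending on whether the annular disk $|z|<r_0$ lies inside or outside $M$) must be treated consistently, and one must verify that the sign flip in $\partial_n\log|z|$ is precisely compensated by the sign flip in $\kappa_{g_0}$, so that the identity $\nu_{K,\rho^{-1}}=\kappa_\rho$ holds in both cases. This also shows, incidentally, that the real structure on $K$ coming from the framing $-idz/z$ is independent of the specific annular coordinate, as asserted in Example \ref{ex:degree}(2).
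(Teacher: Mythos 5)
Your computation is correct and is precisely the routine local verification the paper intends: the paper states Lemma \ref{lem:canonical} without proof, as an immediate consequence of the definitions \eqref{eqn:omega} and \eqref{eqn:nu} applied to $h=|z|^{-2}\rho^{-1}$ in the frame $dz$, together with the standard conformal formulas for Gaussian and geodesic curvature. Your sign bookkeeping for the two possible positions of $M$ relative to the circle $|z|=r_0$ is the only delicate point, and you handle it correctly.
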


For the 
 short time expansion of heat kernels, 
we refer to \cite{BG} and \cite{Gil}.  In 
particular, we use the result in \cite[Sec.\ 5.3]{V} and the expression for $S$ in 
\eqref{eqn:boundary_conditions}.

\begin{proposition} \label{prop:heat}
Let $L\to M$ be a holomorphic line bundle on $M$ with framing $\tau_L$.  Let $\rho$ and $h$ be hermitian metrics on $M$ and $L$, respectively.
Then for any function $f$,  the trace with the heat kernel for the operator $\dla$ with Alvarez boundary conditions defined by $\tau_L$
 has the following short time expansion:
$$
\tr(fe^{-\varepsilon \dla})
=\frac{1}{2\pi\varepsilon}\int_M dA\, f   + \frac{1}{12\pi}\int_M dA\, f(6\Omega_{L,h}+R_\rho)+
\frac{1}{6\pi}\int_{\partial M} ds\, f(\kappa_\rho-3\nu_{L,h}) 
+O(\varepsilon^{1/2})
$$
\end{proposition}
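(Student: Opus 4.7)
The strategy is to identify $\dla$ as a Laplace-type operator on the real rank-$2$ bundle $\Omega^0_\R(M,L)$ subject to mixed Dirichlet--Robin boundary conditions, and then to quote the standard short-time heat trace asymptotics of \cite{BG} as organized in \cite[Sec.\ 5.3]{V}.

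For the interior contributions, I would use the block form \eqref{eqn:p} of $P_L$ to see that $\dla = 2P_L^\dagger P_L$ is the direct sum of $\laplace$ on the first factor and its Serre-dual counterpart on the second. Applying the Kodaira--Nakano identity on each summand recasts $\dla$ in Bochner form $\nabla^*\nabla - E$, where $E$ is a scalar multiple of $\Omega_{L,h}$ times the identity on the real rank-$2$ fiber; the two summands contribute coherently because the Hermitian--Einstein tensor changes sign under duality. Since the fiber trace of the identity is $2$, the universal prefactor $(4\pi\varepsilon)^{-1}$ of the Branson--Gilkey expansion becomes $(2\pi\varepsilon)^{-1}$, reproducing the leading term $\tfrac{1}{2\pi\varepsilon}\int_M dA\, f$.

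For the boundary contributions, \eqref{eqn:boundary_conditions} shows that the Alvarez conditions are exactly the mixed Dirichlet--Robin system of \cite{BG,V}: Dirichlet on the rank-$1$ subbundle cut out by $\Pi_-$, and Robin $(\nabla_n + S)\Pi_+\Phi = 0$ on the complementary rank-$1$ subbundle, with potential $S = \nu_{L,h}$. The grading operator $\chi = \Pi_+ - \Pi_-$ has vanishing fiber trace, so both the $\varepsilon^{-1/2}$ coefficient and the normal-derivative-of-$f$ term at order $\varepsilon^0$ drop out, explaining the absence of intermediate powers of $\varepsilon$. The surviving boundary invariant at order $\varepsilon^0$ is the universal combination of $\tr(\mathrm{id})\,\kappa_\rho = 2\kappa_\rho$ and $\tr(S\Pi_+) = \nu_{L,h}$, which the Branson--Gilkey weights assemble into $\tfrac{1}{6\pi}\int_{\partial M} ds\, f(\kappa_\rho - 3\nu_{L,h})$.

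The main obstacle is the bookkeeping of signs and universal constants: verifying that the Weitzenb\"ock endomorphism enters the interior integral with the sign producing $+6\Omega_{L,h}$ rather than $-6\Omega_{L,h}$, and checking that $\nu_{L,h}$ from \eqref{eqn:nu} (defined via the outward normal) matches the Robin-potential convention of \cite{BG,V}. Once these normalizations are fixed, the formula is a direct application of the standard heat-coefficient expressions.
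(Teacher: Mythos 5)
Your proposal is correct and follows essentially the same route as the paper, which offers no independent proof but simply invokes the Branson--Gilkey mixed (Dirichlet--Robin) heat coefficients as organized in \cite[Sec.\ 5.3]{V}, together with the identification of the Alvarez conditions in the form \eqref{eqn:boundary_conditions} with $S=\nu_{L,h}$. Your account is in fact more detailed than the paper's, spelling out the rank-$2$ fiber trace, the vanishing of $\tr(\Pi_+-\Pi_-)$, and the Weitzenb\"ock identification of $E$ with $\Omega_{L,h}$, and correctly flagging the outward-versus-inward normal convention as the only remaining bookkeeping.
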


\begin{theorem}[Index theorem] \label{thm:index}
Let $L\to M$ be a holomorphic line bundle on  $M$ with  framing $\tau_L$. 
 Then  for Alvarez boundary conditions,
\begin{equation} \label{eqn:index}
{\rm index}\, P_L=\dim_\R\ker P_L-\dim_\R \coker P_L=2\deg(\tau_L)+\chi(M)
\end{equation}
\end{theorem}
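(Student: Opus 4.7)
The plan is to prove this by the heat-equation (McKean--Singer) method, using Proposition~4.15 together with the Serre duality of Proposition~4.13 to reduce the index to a Gauss--Bonnet/Chern--Weil computation.

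First, by Remark~4.12 the nonzero spectrum of $\dla = 2P_L^\dagger P_L$ on $\Omega^0_\R(M,L)$ coincides with multiplicities with that of $2P_LP_L^\dagger$ on $\Omega^{0,1}_\R(M,L)$, both with Alvarez boundary conditions. Hence for every $\varepsilon > 0$
$$
{\rm index}\, P_L \;=\; \tr\bigl(e^{-\varepsilon\dla}\bigr) \;-\; \tr\bigl(e^{-\varepsilon\cdot 2P_LP_L^\dagger}\bigr),
$$
and the right-hand side is $\varepsilon$-independent. I will extract its value by reading off the $\varepsilon^0$ coefficient from the short-time expansion.

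To handle the second trace, I invoke Proposition~4.13: the operator $2P_LP_L^\dagger$ on $\Omega^{0,1}_\R(M,L)$ with Alvarez BC is isospectral to the Dolbeault laplacian on $\Omega^0_\R(M, K\otimes L^\ast)$ with Alvarez BC defined by the induced framing $\tau_L^{-1}\otimes(-i\,dz/z)$ and the product metric $h':=\rho^{-1}\otimes h^{-1}$. Applying Proposition~4.15 (with $f\equiv 1$) to both $(L,h,\tau_L)$ and to $(K\otimes L^\ast, h', \tau_L^{-1}\otimes(-i\,dz/z))$ and subtracting, the singular $(2\pi\varepsilon)^{-1}\,{\rm area}(M)$ terms cancel. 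Using additivity of $\Omega$ and $\nu$ under tensor products (and the fact that both change sign under dualization) combined with Lemma~4.16, the difference simplifies to
$$
{\rm index}\, P_L \;=\; \frac{1}{\pi}\int_M \Omega_{L,h}\,dA \;-\; \frac{1}{\pi}\int_{\partial M}\nu_{L,h}\,ds \;+\; \Bigl(\frac{1}{4\pi}\int_M R_\rho\,dA + \frac{1}{2\pi}\int_{\partial M}\kappa_\rho\,ds\Bigr),
$$
and Gauss--Bonnet identifies the parenthesized term with $\chi(M)$.

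What remains is to identify the first two integrals with $2\deg(\tau_L)$. By Grauert, $L$ admits a global holomorphic trivialization $\one$ on $M$, and near $\partial M$ I write $\tau_L = f\,\one$ for a nowhere vanishing holomorphic $f$. Writing $\Omega_{L,h} = -\tfrac12\Delta_\rho\log h_{\one}$ globally and $\nu_{L,h} = -\tfrac12\partial_n\log(|f|^2 h_{\one})$ on $\partial M$, the divergence theorem cancels the $h_{\one}$ contributions between the bulk and boundary integrals and leaves $\int_{\partial M}\partial_n\log|f|\,ds$. Since $\log|f|$ is harmonic with $\arg f$ as a local harmonic conjugate, $\partial_n\log|f|\,ds = d(\arg f)$ along $\partial M$, so the integral equals $2\pi$ times the winding number of $f$, i.e.\ $2\pi\deg(\tau_L)$. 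Assembling the two pieces yields the formula. I expect the main obstacle to be this final Chern--Weil step, which genuinely uses the holomorphicity of the framing; everything else is a mechanical application of the heat-invariant machinery packaged in Sections~2 and~4.
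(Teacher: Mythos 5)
Your proposal is correct and follows essentially the same route as the paper: the McKean--Singer heat-trace difference, Proposition \ref{prop:heat} applied to both $L$ and (via Remark \ref{rem:adjoint} and Serre duality, Proposition \ref{prop:serre}) to $K\otimes L^\ast$ with the induced framing, Lemma \ref{lem:canonical} to cancel and reorganize the curvature terms, Gauss--Bonnet for $\chi(M)$, and finally the divergence-theorem/winding-number computation with $\tau_L=f\one$ identifying the remaining integrals with $2\deg(\tau_L)$. The only cosmetic difference is that you justify $\frac{1}{2\pi}\int_{\partial M}\partial_n\log|f|\,ds=\deg(\tau_L)$ via the harmonic conjugate, whereas the paper takes this as immediate from the definition of the degree of a framing.
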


\begin{proof}
From Proposition \ref{prop:heat}, Lemma \ref{lem:canonical}, Remark \ref{rem:adjoint}, and Proposition \ref{prop:serre}
\begin{align*}
{\rm index}\, P_L &=\lim_{\varepsilon\to 0}\left\{ \tr(e^{-2\varepsilon P_L^\dagger P_L})-\tr(e^{-2\varepsilon P_L P_L^\dagger})\right\} \\
&=
\frac{1}{2\pi}\int_M dA\, (\Omega_{L,h}-\Omega_{KL^\ast,(\rho h)^{-1}})
-\frac{1}{2\pi}\int_{\partial M} ds\, (\nu_{L,h}-\nu_{KL^\ast,(\rho h)^{-1}}) \\
&= \frac{1}{2\pi}\int_M dA\, 2\Omega_{L,h}
-\frac{1}{2\pi}\int_{\partial M} ds\, 2\nu_{L,h} +
\frac{1}{4\pi}\int_M dA\, R_\rho
+\frac{1}{2\pi}\int_{\partial M} ds\, \kappa_\rho 
\end{align*}
By the Gauss-Bonnet Theorem, the last two terms give the Euler characteristic $\chi(M)$.
Write $\tau_L=f\one_L$, and let $h_0=\Vert\one_L\Vert^2$.  Then near $\partial M$, $h=|f|^2 h_0$,
and
$$
\deg(\tau_L)=\frac{1}{2\pi}\int_{\partial M}ds\,  \partial_n\log |f|
$$
 On the other hand,
\begin{align*}
\frac{1}{2\pi}\int_M dA\,
 \Omega_{L,h}-\frac{1}{2\pi}\int_{\partial M}\nu_{L,h}&
=-\frac{1}{4\pi}\int_{M} dA\, \Delta\log h_0+\frac{1}{4\pi}\int_{\partial M}ds\, \partial_n\log h \\
&=\frac{1}{4\pi}\int_{\partial M}ds\,  (-\partial_n\log h_0+\partial_n\log h) \\
&=\frac{1}{4\pi}\int_{\partial M}ds\,  \partial_n\log |f|^2 =\deg(\tau_L)
\end{align*}
The result follows.
\end{proof}

\begin{remark} \label{rem:alvarez_index}
By Example \ref{ex:degree}, if $K^q$ on $M$ is given the framing
$(-idz/z)^q$  for annular coordinates at each component of $\partial M$, then
$
\deg(\tau_{K^q})=-q\chi(M)
$.  Hence, by Theorem \ref{thm:index},
${\rm index}\,  P_{K^q}=(1-2q)\chi(M)
$.
This agrees with \cite[eq.\ (4.32)]{A}.
\end{remark}

 \subsection{Determinants of laplacians} \label{sec:determinant}
Following \cite{RS}, we define determinants as follows.
Suppose $M$ is closed with conformal metric $\rho$ and a hermitian holomorphic line bundle $L\to M$.
Let $\{\lambda_j\}_{j=1}^\infty$ be the spectrum of $\laplace$ and form the zeta function
$
\zeta_{\laplace}(s) =\sum_{\lambda_j>0} \lambda_j^{-s}
$.
Then  $\zeta_{\laplace}(s)$ converges for $\real(s)$ sufficiently large, and by a theorem of Seeley \cite{See} it is known that $\zeta_{\laplace}(s)$
is regular at $s=0$.   Then 
$
\log\Det^\ast \laplace := -\zeta_{\laplace}'(0)
$.
A similar definition applies to $\Det^\ast\dl$ on $M$, 
and to $\Det^\ast\dla$ when $M$ has boundary,
 $L$ has a framing, and we use Alvarez boundary conditions. 
 When it is understood that the spectrum is
 strictly positive, we will omit the asterisk and write $\Det\laplace$, etc.

When $M$ is closed,  $\dl$ acting on $\Omega_\R^0(M,L)$ is the same as $\laplace$ acting on $\Omega(M,L)$, regarded as a 
real operator (see Section \ref{sec:real}), and hence it has the same spectrum 
but with twice the multiplicity.
 Taking into account also the factor of $2$  in
 the definition of the real inner product  (see \eqref{eqn:dot1}), 
we have the following
\begin{lemma} \label{lem:real_complex}
If $M$ is a closed Riemann surface with line bundle $L\to M$.   
Then for all $\lambda>0$, 
$$\Det(\dl+\lambda)=[\Det(\laplace+\lambda)]^2$$
  Similarly, 
 $$\frac{\Det^\ast \dl}{\det(\Phi_i,\Phi_j)}=
\left(   2^{-h^0(L)} \frac{\Det^\ast \laplace}{\det\langle\omega_i,
\omega_j\rangle} \right)^2$$
 where $\{\omega_i\}_{i=1}^{h^0(L)}$ is a basis (over $\C$) 
for $H^0(M,L)$ and $\{\Phi_i\}_{i=1}^{2h^0(L)}$ is the associated basis (over $\R$) of $\ker \dl$ given by
\begin{equation} \label{eqn:assoc_basis}
\Phi_{2j}=\jmath(i\omega_j) \ ,\
\Phi_{2j-1}=\jmath(\omega_j) 
\end{equation}
for $j=1, \ldots, h^0(L)$.
\end{lemma}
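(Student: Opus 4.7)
The plan breaks into two pieces: the spectral identity and the Gram matrix computation.

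For the first identity, I would invoke the construction of Section \ref{sec:real} applied directly to $V = \Omega^0(M,L)$ (no boundary, so no Alvarez conditions to worry about). Since $\laplace$ is self-adjoint and nonnegative, its spectrum is real, and the general observation at the end of that section shows the associated real operator $\dl = P_{\laplace}$ has the same spectrum but with twice the multiplicity. Therefore
\[
\zeta_{\dl+\lambda}(s)=2\,\zeta_{\laplace+\lambda}(s)
\]
as meromorphic functions of $s$. Differentiating at $s=0$ yields $\log\Det(\dl+\lambda)=2\log\Det(\laplace+\lambda)$, which is the first claim. The same argument with the kernel removed gives $\Det^\ast\dl=[\Det^\ast\laplace]^2$.

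For the second identity, the only remaining task is to compute $\det(\Phi_i,\Phi_j)$ in terms of $H_{jk}:=\langle\omega_j,\omega_k\rangle$. Using the normalization \eqref{eqn:dot1} and $\C$-linearity of $\jmath$ only up to the real-linear identification, I would compute the four types of entries
\[
(\jmath\omega_j,\jmath\omega_k)=2\real H_{jk},\quad (\jmath(i\omega_j),\jmath(i\omega_k))=2\real H_{jk},\quad (\jmath\omega_j,\jmath(i\omega_k))=2\imag H_{jk},
\]
with $(\jmath(i\omega_j),\jmath\omega_k)=-2\imag H_{jk}$ by symmetry. Rearranging the basis so the $\Phi_{2j-1}$ precede the $\Phi_{2j}$ puts the Gram matrix in the block form
\[
\det(\Phi_i,\Phi_j)=\det\!\left(2\begin{pmatrix}\real H & \imag H \\ -\imag H & \real H\end{pmatrix}\right).
\]
This is the standard realification of a Hermitian matrix, whose determinant equals $2^{2h^0(L)}|\det H|^2$. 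Because $H$ is Hermitian, $\det H$ is real, so this simplifies to $2^{2h^0(L)}(\det\langle\omega_i,\omega_j\rangle)^2$.

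Combining with $\Det^\ast\dl=[\Det^\ast\laplace]^2$ produces
\[
\frac{\Det^\ast\dl}{\det(\Phi_i,\Phi_j)}=\frac{[\Det^\ast\laplace]^2}{2^{2h^0(L)}[\det\langle\omega_i,\omega_j\rangle]^2}=\left(2^{-h^0(L)}\frac{\Det^\ast\laplace}{\det\langle\omega_i,\omega_j\rangle}\right)^{\!2},
\]
which is the stated formula. No step here is a serious obstacle: the potentially delicate point is only keeping track of conjugation conventions in the Gram-matrix calculation so that the realification determinant formula is applied correctly. Everything else is either a direct citation of Section \ref{sec:real} or a one-line derivative of a zeta function.
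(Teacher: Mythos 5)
Your proposal is correct and follows essentially the same route as the paper, which justifies the lemma by the doubling of spectral multiplicity for real operators from Section \ref{sec:real} together with the factor of $2$ in \eqref{eqn:dot1}; your explicit realification computation of the Gram determinant, giving $\det(\Phi_i,\Phi_j)=2^{2h^0(L)}(\det\langle\omega_i,\omega_j\rangle)^2$, simply spells out what the paper leaves implicit. No gaps.
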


The main result of this section is the following

\begin{theorem}[Polyakov-Alvarez formula] \label{thm:liouville}
Let $\{\Phi_i\}_{i=1}^m$, $\{\Psi_j\}_{j=1}^n$ be bases for $\ker P_L$ and $\ker P_L^\dagger$, respectively, with Alvarez boundary conditions.
Suppose the following relation for hermitian metrics: $ \rho=e^{2\sigma}\hat \rho$, $ h=e^{2f}\hat h$. Then
$$
\left[\frac{{\Det}^\ast\dla}{\det(\Phi_i,\Phi_j)\det(\Psi_i,\Psi_j)}\right]_{(\rho,h)}=
\left[\frac{{\Det}^\ast\dla}{\det(\Phi_i,\Phi_j)\det(\Psi_i,\Psi_j)}\right]_{(\hat \rho,\hat h)}\exp(S(\sigma, f))
$$
where
\begin{align} \label{eqn:liouville}
\begin{split}
S(\sigma, f)&=-\frac{1}{6\pi}\int_M dA_{\hat\rho}\left\{6\nabla f\cdot\nabla(\sigma+f)+|\nabla\sigma|^2   \right\} \\
&\qquad -\frac{1}{6\pi}\int_M dA_{\hat\rho}\left\{  6\Omega_{L,\hat h}(\sigma+2f)+R_{\hat\rho}(\sigma+3f) \right\} \\
&\qquad +\frac{1}{3\pi}\int_{\partial M} ds_{\hat\rho}\left\{ 3\nu_{L,\hat h}(\sigma+2f)-\kappa_{\hat\rho}(\sigma+3f)   \right\} 
\end{split}
\end{align}
\end{theorem}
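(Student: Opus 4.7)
The plan is to derive the formula by integrating along a one-parameter family of metric data. Fix conformal factors $\sigma,f$ and put $\rho_t=e^{2t\sigma}\hat\rho$, $h_t=e^{2tf}\hat h$ for $t\in[0,1]$, interpolating between $(\hat\rho,\hat h)$ at $t=0$ and $(\rho,h)$ at $t=1$. Since the framing $\tau_L$ is metric-independent, the Alvarez boundary conditions \eqref{eqn:boundary_conditions} give a smooth family of elliptic, self-adjoint, non-negative operators $\dla(t)$. The kernel dimensions are constant in $t$ (by Remark \ref{rem:kernels} and Proposition \ref{prop:serre}, the kernels are determined by holomorphic data and the framing alone), so with
\[
F(t)=\log\left[\frac{\Det^\ast\dla}{\det(\Phi_i,\Phi_j)\det(\Psi_i,\Psi_j)}\right]_{(\rho_t,h_t)},
\]
the task is to compute $S(\sigma,f)=F(1)-F(0)=\int_0^1 F'(t)\,dt$.

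The key variational identity, obtained from the standard Ray--Singer argument for a smooth family of non-negative self-adjoint elliptic operators with constant kernel dimension, is
\[
F'(t)=\mathop{\mathrm{FP}}_{\varepsilon=0}\tr\bigl(\mathcal{M}_t\, e^{-\varepsilon\dla(t)}\bigr),
\]
where $\mathcal{M}_t$ is a zero-order multiplication operator encoding the infinitesimal change of $\dla(t)$ together with the infinitesimal rescaling of the $L^2$-pairing. Using \eqref{eqn:p} and the conformal transformation rule of $\dbar_L^\ast$ from $(\hat\rho,\hat h)$ to $(\rho_t,h_t)$, a direct computation identifies $\mathcal{M}_t$ as an $\R$-linear combination of $\sigma$ and $f$; the normalization by $\det(\Phi_i,\Phi_j)\det(\Psi_i,\Psi_j)$ in $F(t)$ is precisely what cancels the zero-mode contribution, so that the finite part in $\varepsilon$ is well-defined. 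Proposition \ref{prop:heat}, applied with $\mathcal{M}_t$ in the role of the multiplier $f$, now expresses $F'(t)$ as local integrals over $M$ and $\partial M$ involving $R_{\rho_t}$, $\Omega_{L,h_t}$, $\kappa_{\rho_t}$, and the framing-dependent Robin coefficient $\nu_{L,h_t}$, weighted against $\sigma$ and $f$.

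To finish, substitute the conformal transformation laws
\[
\Omega_{L,h_t}=e^{-2t\sigma}(\Omega_{L,\hat h}-t\Delta_{\hat\rho}f),\qquad R_{\rho_t}=e^{-2t\sigma}(R_{\hat\rho}-2t\Delta_{\hat\rho}\sigma),
\]
together with $dA_{\rho_t}=e^{2t\sigma}dA_{\hat\rho}$ and the corresponding boundary identities for $\nu_{L,h_t}\,ds_{\rho_t}$ and $\kappa_{\rho_t}\,ds_{\rho_t}$. This reduces $F'(t)$ to a degree-one polynomial in $t$ with coefficients built from $\sigma,f$ and their first derivatives, paired against the fixed geometric data of $(\hat\rho,\hat h)$. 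Integrating in $t$ over $[0,1]$ produces the factors $\int_0^1 dt=1$ and $\int_0^1 t\,dt=\tfrac{1}{2}$, while Green's identity on $M$ (moving $\Delta_{\hat\rho}$ across $\sigma, f$ and generating boundary contributions pairing with $\partial_n\sigma,\partial_n f$) reassembles the integrand into exactly the three lines of \eqref{eqn:liouville}: the gradient-quadratic term $6\nabla f\cdot\nabla(\sigma+f)+|\nabla\sigma|^2$, the curvature-weighted linear terms $6\Omega_{L,\hat h}(\sigma+2f)+R_{\hat\rho}(\sigma+3f)$, and the boundary terms $3\nu_{L,\hat h}(\sigma+2f)-\kappa_{\hat\rho}(\sigma+3f)$.

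The main obstacle is the careful derivation of the variational identity for $F'(t)$. The $t$-dependent domain of $\dla(t)$, determined by \eqref{eqn:boundary_conditions} with the $t$-dependent Robin coefficient $\nu_{L,h_t}$, must be handled with enough smoothness for Kato-type perturbation theory to apply and for $\zeta_{\dla(t)}'(0)$ to depend smoothly on $t$; one must also verify that the $\det(\Phi_i,\Phi_j)\det(\Psi_i,\Psi_j)$ correction matches the zero-mode contribution in the heat trace, a point that uses the fact that the eigenspaces decompose orthogonally under $\dla(t)$. Once this variational machinery is in place, the boundary anomaly in Proposition \ref{prop:heat} is precisely the correct contribution for the mixed Dirichlet--Robin conditions, and the remainder is careful bookkeeping of local integrals.
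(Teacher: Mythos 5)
Your overall strategy (interpolate the metrics, differentiate the zeta-regularized determinant along the family, convert the variation into heat traces, apply Proposition \ref{prop:heat}, substitute the conformal transformation laws, and integrate) is the same as the paper's, which follows Alvarez. But there is a genuine gap at the central step: the variational identity. You posit $F'(t)=\mathrm{FP}_{\varepsilon=0}\tr\bigl(\mathcal{M}_t e^{-\varepsilon\dla(t)}\bigr)$ with a \emph{single} zero-order multiplication operator paired against the heat kernel of $\dla(t)$ on sections of $L$. This cannot work. Writing $\laplace(t)=2\,e^{-2t(\sigma+f)}\hat{\dbar}_L^{\ast}e^{2tf}\dbar_L$, the variation of an eigenvalue is $\dot\lambda_j=-2\lambda_j((\dot\sigma+\dot f)\Phi_j,\Phi_j)+2\lambda_j(\dot f\Psi_j,\Psi_j)$, where $\Psi_j$ is the normalized image $P_L\Phi_j$ (Remark \ref{rem:adjoint}). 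The second term, summed against $e^{-\varepsilon\lambda_j}$, is a heat trace for the \emph{other} laplacian $2P_LP_L^{\dagger}$ on $\Omega^{0,1}_\R(M,L)$ --- it is not expressible as $\tr(\text{mult.}\cdot e^{-\varepsilon\dla})$. Via Serre duality (Proposition \ref{prop:serre}) this is the laplacian on $K\otimes L^{\ast}$ with its induced framing, and its heat coefficients must be converted using $\Omega_{KL^\ast,(\rho h)^{-1}}=-\tfrac12 R_\rho-\Omega_{L,h}$ and $\nu_{KL^\ast,(\rho h)^{-1}}=\kappa_\rho-\nu_{L,h}$ (Lemma \ref{lem:canonical}). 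It is precisely the combination of the two traces that produces the coefficients $(\sigma+2f)$ and $(\sigma+3f)$ in \eqref{eqn:liouville}; your single-trace ansatz, whose local integrals you describe as involving only $\Omega_{L,h_t}$ and $\nu_{L,h_t}$, would yield $(\sigma+f)$ throughout and the wrong answer.

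A related symptom: you never explain why the \emph{cokernel} Gram determinant $\det(\Psi_i,\Psi_j)$ appears in the normalization. It is there because the second heat trace has its own zero modes, namely $\ker P_L^{\dagger}$, whose contribution must be subtracted; in a single-trace framework only $\det(\Phi_i,\Phi_j)$ would be needed. The rest of your outline --- constancy of the kernel dimensions (correct, since by Remark \ref{rem:kernels} the kernel depends only on the framing), the conformal transformation laws, the integration by parts producing the gradient-quadratic terms, and the cancellation of the residual boundary terms --- matches the paper's bookkeeping and would go through once the two-trace variational formula is in place.
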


\begin{proof}
The argument  follows \cite{A}; here we only sketch the ideas.  
  Let $\{\Phi_j\}$ be an orthonormal
 basis  of eigensections of $\dla$ with eigenvalues $\lambda_j$.
Then  by Remark \ref{rem:adjoint},
 if $\Psi_j=(1/\sqrt{\lambda_j})P_L\Phi_j$, then  $\{\Psi_j\}$
 is an orthonormal basis
 of the subspace of eigensections of $2P_LP_L^\dagger$  with nonzero eigenvalues and Alvarez boundary conditions.
 Let $\sigma=\sigma(t)$, $f=f(t)$ be one parameter families of conformal deformations; $\dot\sigma$ and $\dot f$, their derivatives.
One computes the variation of eigenvalues.
$$
\dot\lambda_j=-2\lambda_j((\dot\sigma+\dot f)\Phi_j,\Phi_j)+2\lambda_j(\dot f\Psi_j,\Psi_j)
$$
Then as in \cite[pp.\ 148-9]{A}, the corresponding variation of the determinant is given by
\begin{align*} 
\frac{d}{dt}\log\Det^\ast\dla
 &= 
  f.p.\int_\varepsilon^\infty dt\sum_{\lambda_j\neq 0}  \dot\lambda_j e^{-t\lambda_j}     \\
&= f.p.\int_\varepsilon^\infty dt\sum_{\lambda_j\neq 0} \left\{ -2\lambda_j((\dot\sigma+\dot f)\Phi_j,\Phi_j)+2\lambda_j(\dot f\Psi_j,\Psi_j)\right\} e^{-t\lambda_j} \\
&= -f.p.\int_\varepsilon^\infty dt \frac{d}{dt}\left\{  -2\tr ((\dot\sigma+\dot f)e^{-2tP_L^\dagger P_L}) + 2\tr(\dot fe^{-2t P_L P_L^\dagger})\right\}
\end{align*}
Applying Proposition \ref{prop:heat} to the heat kernel expansions for the laplacians on $D_L^{\smallA}$ and $D_{KL^\ast}^{\smallA}$,
\begin{align*}
\frac{d}{dt}\log\Det^\ast\dla &= -\frac{1}{6\pi}\int_M dA_\rho\, (6\Omega_{L,h}+R_\rho)(\dot\sigma+\dot f)+\frac{1}{6\pi}\int_M dA_\rho\, (6\Omega_{KL^\ast,(\rho h)^{-1}}+R_\rho)\dot f \\
&\qquad\qquad -\frac{1}{3\pi}\int_{\partial M}ds_\rho\, (\kappa_\rho-3\nu_{L,h}) (\dot\sigma+\dot f)+\frac{1}{3\pi}\int_{\partial M}ds_\rho\, (\kappa_\rho-3\nu_{KL^\ast,(\rho h)^{-1}})\dot f
\end{align*}
From Lemma \ref{lem:canonical} it follows that 
$
\Omega_{KL^\ast,(\rho h)^{-1}}= -(1/2)R_\rho-\Omega_{L,h}$, and
 $\nu_{KL^\ast,(\rho h)^{-1}}=\kappa_\rho-\nu_{L,h}
$.
Hence,
\begin{align}
\begin{split} \label{eqn:var}
\frac{d}{dt}\log\Det^\ast\dla &= -\frac{1}{6\pi}\int_M dA_\rho\, \left\{ 6\Omega_{L,h}(\dot\sigma+2\dot f)+R_\rho(\dot\sigma+3\dot f)\right\} \\
&\qquad\qquad -\frac{1}{3\pi}\int_{\partial M}ds_\rho\, \left\{\kappa_\rho(\dot\sigma+3\dot f)
-3\nu_{L,h} (\dot\sigma+2\dot f)\right\}
\end{split}
\end{align}
We have the following variations with respect to conformal changes.
\begin{eqnarray*}
R_{\rho}= e^{-2\sigma}(R_{\hat \rho}-2\Delta_{\hat \rho}\sigma) \qquad  
& \Omega_{L, h}&= e^{-2\sigma}(\Omega_{L,\hat h}-\Delta_{\hat \rho} f) \\
\kappa_{\rho} = e^{-\sigma}(\kappa_{\hat\rho}+\partial_{\hat n}\sigma)\qquad  &
\nu_{L, h}&=e^{-\sigma}(\nu_{L,\hat h}-\partial_{\hat n}f)
\end{eqnarray*}
Plugging these into the above, the first term on the right hand side of \eqref{eqn:var} becomes
\begin{align}
\begin{split} \label{eqn:first}
&-\frac{1}{6\pi}\int_M dA_{\hat\rho}\, \left\{ 6\Omega_{L,\hat h}
(\dot\sigma+2\dot f)+R_{\hat\rho}(\dot\sigma+3\dot f)\right\} \\
&\qquad \qquad -\frac{1}{6\pi}\int_M dA_{\hat\rho}\, \left\{ 6\nabla f\cdot \nabla \dot\sigma+12\nabla f\cdot\nabla\dot f+2\nabla\sigma\cdot\nabla\dot\sigma+6\nabla\sigma\cdot\nabla \dot f\right\} \\
&\qquad\qquad +\frac{1}{6\pi}\int_{\partial M} ds_{\hat\rho} \left\{ 12(\partial_{\hat n} f)\dot f + 2(\partial_{\hat n}\sigma)\dot\sigma +6((\partial_{\hat n} f)\dot \sigma+(\partial_{\hat n} \sigma)\dot f)\right\} 
\end{split}
\end{align}
whereas the second term on the right hand side of \eqref{eqn:var} becomes
\begin{align}
\begin{split} \label{eqn:second}
&-\frac{1}{3\pi}\int_{\partial M}ds_{\hat\rho}\, \left\{\kappa_{\hat\rho}(\dot\sigma+3\dot f)
-3\nu_{L,\hat h} (\dot\sigma+2\dot f)\right\} \\
&\qquad \qquad -\frac{1}{3\pi}\int_{\partial M}ds_{\hat\rho}\, \left\{(\partial_{\hat n}\sigma)(\dot\sigma+3\dot f)
+3(\partial_{\hat n}f)(\dot\sigma+2\dot f)\right\}
\end{split}
\end{align}
The last terms on the right hand sides of \eqref{eqn:first} and \eqref{eqn:second} cancel. The remaining terms can be integrated as in \cite{A}, giving the desired result.
\end{proof}

\begin{remark}  \label{rem:polyakov_alvarez}
 Consider the following special cases:
\begin{enumerate}
\item  $\partial M=\emptyset$.  Then the formula in \eqref{eqn:liouville} coincides with the result in \cite[Prop.\ 3.8]{F2}.  Note that there is an overall
factor of $2$, coming from the fact that the determinant $\Det^\ast \dl$, regarded as a real operator, is the square of the complex laplacian (see 
Lemma \ref{lem:real_complex}).
\item  If $L=K^q$,  $h$ the induced metric from $M$, and $f=-q\sigma$, then  \eqref{eqn:liouville} coincides with the result in \cite[eq.\ (4.29)]
{A} (see Lemma \ref{lem:canonical}).
\item If $L$ is the trivial bundle $\Ocal$ with the flat metric, then Alvarez boundary conditions amount to Dirichlet conditions on the real part and Neumann conditions on the imaginary part.  Hence, the scalar determinant is
$\Det^\ast D^{\smallA}_{\Ocal}=[{\Det}^\ast_{neu.}(\Delta)][{\Det}_{dir.}(\Delta)]$.
\item 
By Remark \ref{rem:adjoint} and Serre duality Proposition \ref{prop:serre} applied to the trivial bundle,
$$
\Det^\ast D^{\smallA}_K= \Det^\ast (2P_{\Ocal}P^\dagger_{\Ocal})=\Det^\ast (2P^\dagger_{\Ocal}P_{\Ocal})=
\Det^\ast D^{\smallA}_{\Ocal}
$$
\end{enumerate}
\end{remark}

\section{Factorization of determinants}  \label{sec:factorization}

\subsection{The generalized Dirichlet-to-Neumann operator}
In this section we assume $M$ has non-empty boundary.
Let $L\to M$ be a hermitian holomorphic bundle with framing $\tau_L$.
The following is clear.
\begin{lemma}  The real and imaginary Alvarez boundary conditions are
complimentary in the sense of \cite[Def.\ 2.12]{BFK}.
\end{lemma}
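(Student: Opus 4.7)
The plan is to unpack Definition 2.12 of \cite{BFK} in the present setting and observe that both requirements follow directly from the real-structure formalism set up in Section \ref{sec:real} and the reformulation \eqref{eqn:boundary_conditions} of Alvarez conditions as mixed Dirichlet--Robin conditions.

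First I would verify the algebraic half of complementarity. Since $\sigma_\R$ is an isometric involution on $\Bm$, the projections $\Pi_\pm=\tfrac{1}{2}(I\pm\sigma_\R)$ are orthogonal and satisfy $\Pi_++\Pi_-=I$. Applied to the trace map this gives $\bmp+\bmpp=\bm$ and an orthogonal direct-sum decomposition $\Bm=\Bmp\oplus\Bmpp$ of the Cauchy data space into the lagrangian subspaces. In particular, passing from the imaginary Alvarez condition $\bmpp(\Phi)=0$ to the real one $\bmp(\Phi)=0$ is effected by replacing $\Pi_-$ with $\Pi_+$, and the two projections are complementary in the strictest sense.

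Next I would verify the analytic half, namely that each boundary value problem is individually elliptic in the Shapiro--Lopatinski sense. This is already encoded in \eqref{eqn:boundary_conditions}: in a local unitary frame, each Alvarez condition splits as a Dirichlet condition on one real rank-one summand $\Pi_\mp\Phi$ together with a Robin condition (with first-order term $\nu_{L,h}$) on its complement $\Pi_\pm\Phi$. Both are classical strongly elliptic local conditions for a Laplace-type operator, and swapping the roles of $\Pi_+$ and $\Pi_-$ preserves this structure. Since $\dl$ regarded as the real operator $2P_L^\dagger P_L$ has the same leading symbol as the scalar Laplacian tensored with the identity on the rank-two real fibre, the scalar statement of \cite[Def.\ 2.12]{BFK} applies essentially verbatim.

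The only obstacle I anticipate is bookkeeping: reconciling the complex BVP conventions of \cite{BFK} with the real/complex dictionary fixed in Section \ref{sec:real}. Once the identification $\Omega^0(M,L)\simeq \Omega^0_\R(M,L)$ via $\jmath_0$ is made and one checks that $J$ interchanges $\Bmp$ with $\Bmpp$, so that the symplectic pairing $(f,Jg)$ is precisely the BFK pairing of complementary conditions, there is nothing further to prove, which is why the author remarks that the statement is clear.
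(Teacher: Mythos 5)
Your proposal is correct and matches the paper's (implicit) reasoning: the paper offers no written proof, declaring the lemma clear precisely because $\bmp$ and $\bmpp$ are the complementary orthogonal projections of the full Cauchy-data map $\bm$ onto the lagrangian summands of $\Bm=\Bmp\oplus\Bmpp$, and the ellipticity of each mixed Dirichlet--Robin condition is already established in Section \ref{S:bvp} via \eqref{eqn:boundary_conditions}. Your write-up simply makes explicit the details the author leaves to the reader.
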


\begin{definition}  \label{def:alvarez_operator}
The Poisson operator is characterized by the condition 
$$\poisson_M(\lambda): \Bmpp\to\Omega_\R^0(M,L): (f,g)\mapsto \poisson_M(\lambda)(f,g)= \Phi
$$
where $\Phi$ satisfies $(\dl+\lambda)\Phi=0$, and $\bmpp(\Phi)=(f,g)$.
The  boundary operator is defined by
$$\A_M(\lambda):\Bmpp\to \Bmpp:   \A_M(\lambda)=J\bmp\poisson_M(\lambda)
$$
 \end{definition}
\noindent Hence, $\A_M(\lambda)$ is the analog of the Dirichlet-to-Neumann operator. Like the DN operator, $\A_M(\lambda)$ is elliptic and, by \eqref{eqn:parts} it is self-adjoint.
 In this case, however, it is a \emph{zero-th} order pseudo-differential operator instead of first order.  

In case $\lambda=0$, the  Poisson, and hence also boundary operators are not necessarily everywhere
defined nor are they a priori well-defined. This can be seen from the
integration by parts formula \eqref{eqn:parts}.  The Poisson operator is defined at $(f,g)$
only if $(f,g)$ is orthogonal to the image by $J$ of boundary values of
sections $\Phi\in \ker\dl$ satisfying  imaginary Alvarez boundary conditions.
Similarly, given any such $(f,g)$, the extension by the Poisson operator is
only well-defined up to addition of such $\Phi$.  With this in mind,  set

\begin{equation} \label{def:a_bv}
{\mathbb A}_{ M}^{\alv}=\left\{ J\bmp(\Phi) : \Phi\in \ker
\dl\ ,\ \bmpp(\Phi)=0\right\}
\end{equation}

\begin{proposition} \label{prop:extension}
On the orthogonal complement of ${\mathbb A}_{M}^{\alv}$,  the family
$\A_M(\lambda)$ extends
continuously as $\lambda\to 0$
 to an  operator $\A_M(0)=\A_M$.
\end{proposition}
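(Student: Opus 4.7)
The plan is to analyze the family $\A_M(\lambda) = J\bmp \poisson_M(\lambda)$ by reducing the boundary value problem defining $\poisson_M(\lambda)$ to an inhomogeneous equation on the space of Alvarez sections, where the only obstruction to extension at $\lambda=0$ comes from $\ker \dla$. Concretely, given $(f,g)\in\Bmpp$, I would first pick any smooth extension $\Phi_0\in\Omega^0_\R(M,L)$ with $\bmpp(\Phi_0)=(f,g)$, and then seek $\poisson_M(\lambda)(f,g)=\Phi_0+\psi_\lambda$ where $\psi_\lambda$ has homogeneous Alvarez boundary data and satisfies $(\dla+\lambda)\psi_\lambda = -(\dl+\lambda)\Phi_0$. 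For $\lambda>0$ the operator $\dla+\lambda$ is invertible on the closed Sobolev subspace of sections with Alvarez boundary conditions, producing $\psi_\lambda=-(\dla+\lambda)^{-1}(\dl+\lambda)\Phi_0$.

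The next step is to examine the pole of $(\dla+\lambda)^{-1}$ as $\lambda\downarrow 0$. Let $\{\Phi_i\}$ be an orthonormal basis of $\ker \dla$, so $(\dla+\lambda)^{-1}$ acts as $\lambda^{-1}\,\mathrm{Id}$ on $\ker\dla$ and as a uniformly bounded operator on $(\ker\dla)^\perp$. Applying the integration-by-parts identity \eqref{eqn:parts} and using that $\bmpp(\Phi_i)=0$ (so $\bm(\Phi_i)=\bmp(\Phi_i)$), one obtains
\begin{equation*}
\langle (\dl+\lambda)\Phi_0,\Phi_i\rangle_M = \tfrac{1}{2}\bigl((f,g),\,J\bmp(\Phi_i)\bigr) + \lambda\langle\Phi_0,\Phi_i\rangle_M,
\end{equation*}
so that the projection of $(\dl+\lambda)\Phi_0$ onto $\ker\dla$ is $\tfrac{1}{2}\sum_i ((f,g), J\bmp\Phi_i)\,\Phi_i + O(\lambda)$. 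Precisely when $(f,g)\perp \mathbb{A}_M^{\alv}$, the leading term vanishes, so the apparently singular contribution $\lambda^{-1}(\dla+\lambda)^{-1}$ on $\ker\dla$ produces only an $O(1)$ term, and the $(\ker\dla)^\perp$-component converges by boundedness of the Green's operator.

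Hence the norm-bounded limit $\psi_0 = \lim_{\lambda\to 0^+}\psi_\lambda$ exists in each Sobolev space, defining $\poisson_M(0)(f,g)=\Phi_0+\psi_0$ modulo $\ker\dla$. Because the ambiguity lies in $\ker\dla$, after applying $\bmp$ and $J$ the result is well-defined modulo $J\bmp(\ker\dla)=\mathbb{A}_M^{\alv}$; restricting to the orthogonal complement of $\mathbb{A}_M^{\alv}$ on both sides (using self-adjointness of $\A_M(\lambda)$ inherited from \eqref{eqn:parts}) gives a genuine operator $\A_M(0)$. Ellipticity of the Alvarez BVP, together with the Sobolev bounds at each stage, then upgrades pointwise to continuous convergence of $\A_M(\lambda)\to\A_M$.

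The main obstacle is the careful bookkeeping in the third step above: one must confirm that the pole of the resolvent on $\ker\dla$ cancels \emph{exactly} against the orthogonality hypothesis on $(f,g)$, uniformly in the appropriate topology, and that the inherent $\ker\dla$-ambiguity in $\poisson_M(0)$ is precisely annihilated by the quotient by $\mathbb{A}_M^{\alv}$. Everything else is standard elliptic boundary regularity for the mixed Dirichlet--Robin problem of Section \ref{sec:abc}.
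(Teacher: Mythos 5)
Your argument is essentially the paper's own proof: both reduce the Poisson problem to an inhomogeneous equation with homogeneous Alvarez data via an extension operator, expand in eigensections of $\dla$, and use the integration-by-parts identity \eqref{eqn:parts} to identify the coefficient of the $\lambda^{-1}$ pole as the pairing of $(f,g)$ with $J\bmp(\Phi_i)$, which vanishes exactly on $({\mathbb A}_M^{\alv})^\perp$. The only cosmetic differences are your spurious factor of $\tfrac12$ in the boundary pairing (the paper's \eqref{eqn:parts} has no such factor, and it is irrelevant to the vanishing argument) and the paper's additional normalization $\bmp E=0$ on the extension map, which merely removes the harmless $\lambda$-independent term $J\bmp(\Phi_0)$.
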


\begin{proof}
Let $\{\Phi_i^{\smalla}\}_{i=1}^\infty$
 be a complete set of eigensections for $\dla$ with 
eigenvalues $\{\lambda_i\}_{i=1}^\infty$, and
 $\lambda_i=0$ if and only if $i\leq
n$.
Choose a smooth extension map $E: \Bmpp\to
L^2(M)$ satisfying $\bmpp  E=I$, $\bmp E=0$. 
 To compute $\poisson_M(\lambda)(f,g)$ we need to solve the boundary value problem
$$
(\dl+\lambda)\Phi=0 \ ,\
\bmpp(\Phi)=(f,g)
$$
on $M$.
From the definition of the extension, it suffices to solve
$$
(\dl+\lambda)\widetilde\Phi=
- (\dl+\lambda) E(f,g)\ ,\
\bmpp(\widetilde\Phi)=0
$$
for then $\Phi=
E(f,g)+\widetilde\Phi$. Moreover, by the assumption on $E$,
$J\bmp(\widetilde\Phi)=\A_M(\lambda)(f,g)$.  Now
\begin{align*}
\widetilde \Phi&=-\sum_{j=1}^\infty\frac{1}{\lambda_j+\lambda}
((\dl+\lambda)E(f,g),\Phi_j^{\smalla})_{M}
\Phi_j^{\smalla} \\
&=
-\sum_{j=1}^n
\left\{
\frac{1}{\lambda}
(\dl E(f,g),\Phi_j^{\smalla})_{M}
+
(E,\Phi_j)_{M}
\right\}
\Phi_j^{\smalla} \\
&\qquad\qquad 
 -\sum_{j=n+1}^\infty \frac{1}{\lambda_j+\lambda} 
 \left((\dl+\lambda)E(f,g),\Phi_j^{\smalla}\right)_{M}
\Phi_j^{\smalla} 
\end{align*}
By \eqref{eqn:parts}, the first sum on the right hand side reduces to
(since $\bmpp(\Phi_j^{\smalla})=0$)
\begin{align*}
&=
-\sum_{j=1}^n \left\{ \frac{1}{\lambda}
(\bm(E(f,g)),J\bm(\Phi_j^{\smalla}))
+
(E(f,g),\Phi_j^{\smalla})_{M}
\right\} \Phi_j^{\smalla} \\
&=
-\sum_{j=1}^n \left\{ \frac{1}{\lambda}
(\bmpp(E(f,g)),J\bmp(\Phi_j^{\smalla}))
+
(E(f,g),\Phi_j^{\smalla})_{M}
\right\} \Phi_j^{\smalla} \\
&=-\sum_{j=1}^n \left\{ \frac{1}{\lambda}
((f,g), J\bmp(\Phi_j^{\smalla}))
+
(E(f,g),\Phi_j^{\smalla})_{M}
\right\} \Phi_j^{\smalla} 
\end{align*}
Hence, if $(f,g)\in ({\mathbb A}_M^{\alv})^\perp$,
$$
\A_M(\lambda)(f,g)=
-\sum_{j=1}^n 
(E(f,g),\Phi_j^{\smalla})_{M}
J\bmp \Phi_j^{\smalla} 
-\sum_{j=n+1}^\infty \frac{1}{\lambda_j+\lambda} 
 \left((\dl+\lambda)E(f,g),\Phi_j^{\smalla}\right)_{M}
J\bmp\Phi_j^{\smalla} 
$$
This clearly extends continuously as $\lambda\to 0$, the second term giving the orthogonal projection to $({\mathbb A}_M^{\alv})^\perp$.
\end{proof}

\begin{example}   \label{ex:boundary_operator_disk}
Consider the disk $B_\varepsilon$ of radius $\varepsilon$ with the
euclidean metric and trivial line bundle, metric, and framing.
 Then ${\mathbb A}_{ B_\varepsilon}^{\alv}=\{0\}\oplus\R$.
By direct
computation one shows that 
\begin{equation} \label{eqn:dn_disk}
\A_{B_\varepsilon}(f,g)(\theta)=\sum_{n\neq 0}\left(\begin{matrix}0&-i\sigma(n)\\
i\sigma(n)&-\varepsilon/|n|\end{matrix}\right){\hat f(n)\choose \hat g(n)}e^{in\theta}
\end{equation}
where
\begin{equation} \label{eqn:fg}
f(\theta)=\sum_{n\in\Z}\hat f(n)e^{in\theta}\ ,\ g(\theta)=\sum_{n\neq 0}\hat 
g(n)e^{in\theta}
\end{equation}
and $\sigma(n)$ is the sign of $n$.
\end{example}

\subsection{The generalized Neumann jump operator}
Now suppose $M$ is  \emph{closed}.  Let $\Gamma\subset M$
 be a union of  simple closed disjoint curves in $M$, and define $M_\Gamma$ to be the surface with boundary obtained from $M\setminus\Gamma$ by adjoining a double cover of $\Gamma$.
We denote the connected components of $M_\Gamma$ by $\Ri$, and by $g_i$ we mean the genus of $\Ri$.
 Note that a conformal metric $\rho$ on $M$ induces one on $M_\Gamma$, and a holomorphic hermitian line bundle
 $L$  on determines one on 
  $M_\Gamma$. In both cases,  we use the same notation for the objects on $M$ and $M_\Gamma$.

 Suppose that $\tau_L$ is a framing of $L\to M_\Gamma$.  We will \emph{always} assume such framings arise from local trivializations of $L$ in a neighborhood of $\Gamma\subset M$. We have the following 
 
 \begin{lemma}
 Let $d_i$ denote the degree of $L\to \Ri$ defined by framing $\tau_L$, and let $d$ be the degree of $L\to M$.  Then $d=\sum_i d_i$.
 \end{lemma}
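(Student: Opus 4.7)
The plan is to use the Chern--Weil type integral formula for $\deg(\tau_L)$ that was already derived inside the proof of Theorem \ref{thm:index}, namely
$$
\deg(\tau_L)=\frac{1}{2\pi}\int_{M} dA_\rho\, \Omega_{L,h}-\frac{1}{2\pi}\int_{\partial M} ds_\rho\, \nu_{L,h},
$$
applied to each surface-with-boundary $\Ri$, and then sum over $i$. First I would fix any conformal metric $\rho$ on $M$ and any hermitian metric $h$ on $L\to M$, modifying $h$ near $\Gamma$ (which does not alter the first Chern number and hence not $d$) so that $h=\Vert\tau_L\Vert^2$ on a tubular neighborhood $U$ of $\Gamma$ where $\tau_L$ is defined. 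This is legitimate because the hypothesis in the lemma is precisely that the framings on the various $\Ri$ come from a single trivialization on a neighborhood of $\Gamma\subset M$, so $\Vert\tau_L\Vert^2$ is a single smooth function on $U$ independent of side.

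Next I would apply the formula above to each component $\Ri$, obtaining
$$
d_i=\frac{1}{2\pi}\int_{\Ri} dA_\rho\, \Omega_{L,h}-\frac{1}{2\pi}\int_{\partial \Ri} ds_\rho\, \nu_{L,h},
$$
and sum over $i$. The bulk terms add to $\frac{1}{2\pi}\int_M dA_\rho\,\Omega_{L,h}$, which equals $d$ by Chern--Weil (the degree of $L\to M$ is the integral of the curvature of the Chern connection of any hermitian metric).

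For the boundary terms, each connected component $\Gamma_j$ of $\Gamma$ produces two boundary circles in $\bigsqcup_i\partial\Ri$, one from each side of the cut. The outward normals for these two boundary circles point in opposite directions in $M$. Because $h=\Vert\tau_L\Vert^2$ is a single function defined on the two-sided neighborhood $U$, the normal derivatives $\partial_n\log h$ picked up on the two sides are exact negatives of each other, and hence the contributions of $\nu_{L,h}=-\tfrac12\partial_n\log h$ cancel in pairs. Therefore $\sum_i d_i=d$.

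The only real point to verify carefully, and the place where the hypothesis of the lemma is used, is the compatibility of the framing across $\Gamma$ that makes $h$ well-defined on all of $U$; once this is in hand the cancellation of the boundary integrals is automatic and the bulk integral identification with $d$ is the usual Chern--Weil statement.
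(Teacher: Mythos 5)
Your argument is correct, but it is not the route the paper takes. The paper's proof is purely topological: it picks a global meromorphic section $s$ of $L\to M$ with divisor disjoint from $\Gamma$, notes that $d=\deg(s)=\sum_i\deg(s_i)$ by additivity of divisor degree, writes $\tau_L=fs$ for a nowhere vanishing function $f$ near $\Gamma$, and observes that the winding numbers of $f$ on the two copies of each component of $\Gamma$ in $M_\Gamma$ cancel because they are computed with respect to opposite outward normals; hence $\sum_i\deg(\tau_L)_{\Ri}=\sum_i\deg(s_i)=d$. You instead invoke the curvature-integral identity $\deg(\tau_L)=\frac{1}{2\pi}\int dA\,\Omega_{L,h}-\frac{1}{2\pi}\int_{\partial}ds\,\nu_{L,h}$ established inside the proof of Theorem \ref{thm:index}, sum over the $\Ri$, identify the bulk total with $d$ by Chern--Weil, and cancel the boundary terms in pairs since $\nu_{L,h}=-\tfrac12\partial_n\log\Vert\tau_L\Vert^2$ is computed from a single function on a two-sided neighborhood of $\Gamma$ with opposite outward normals. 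This is not circular (the lemma is not used in Theorem \ref{thm:index}), and each $\Ri$ has nonempty boundary so the identity applies. Two small remarks: your step of ``modifying $h$ near $\Gamma$'' is unnecessary --- for any hermitian metric $h$ the quantity $\Vert\tau_L\Vert_h^2$ is already a single smooth function on the neighborhood where $\tau_L$ is defined, which is exactly what the definition \eqref{eqn:nu} uses, so the compatibility you worry about is automatic from the hypothesis that the framing comes from a trivialization near $\Gamma\subset M$. The trade-off between the two proofs: the paper's is more elementary (divisors and winding numbers, no metric needed), while yours reuses analytic machinery already set up for the index theorem and makes the same outward-normal cancellation appear at the level of normal derivatives rather than winding numbers.
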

 
 \begin{proof}
 Let $s$ be a meromorphic section of $L$ with no zeros or poles on $\Gamma$, and let $s_i$ denote the induced meromorphic sections of $L\to \Ri$.  Clearly, $d=\deg(s)=\sum_i \deg(s_i)$.  Write $\tau_L=fs$ for a nowhere vanishing function $f$ defined in a neighborhood of $\Gamma$.
 Then the local winding number of $\tau_L$ is the sum of local winding numbers of $f$ and $s$.  On the other hand, for each component of $\Gamma$, the local winding numbers of $f$ on the two copies in $M_\Gamma$ cancel, since they are defined in terms of outward normals.  Hence,
 $$
 \sum_{i=1}\deg(\tau_L)_{\Ri}=\sum_{i=1} \deg(s_i)=d
 $$
 \end{proof}

The additivity of the Euler characteristic and Theorem \ref{thm:index} imply 
\begin{corollary} \label{cor:additive_index}
Let $M$ be a closed surface and $\Gamma\subset M$ a union
 of simple closed curves dividing $M$ into surfaces $\Ri$, $i=1,\ldots, \ell$, with boundary. Let $P_L$ be the real operator associated to $\dbar_L$ on $\Omega^0(M,L)$, and  on $\Omega^0_\R(\Ri, L)$ with  Alvarez boundary conditions defined by a framing $\tau_L$.  Then
$$
{\rm index}(P_L) =\sum_{i=1}^\ell {\rm index}(P_L)_{\Ri}
$$
\end{corollary}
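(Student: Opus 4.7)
The proof is a direct assembly of the index formula from Theorem \ref{thm:index} with the additivity results already in hand. The plan is to evaluate both sides of the claimed identity explicitly and check they agree.

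First, I will apply Theorem \ref{thm:index} to each component $R^{(i)}$ separately, giving
\[
{\rm index}(P_L)_{R^{(i)}} = 2\deg(\tau_L)_{R^{(i)}} + \chi(R^{(i)}) = 2d_i + \chi(R^{(i)}).
\]
Summing over $i$ and invoking the preceding Lemma (that $\sum_i d_i = d$, the degree of $L\to M$), the right-hand side of the claim becomes
\[
\sum_{i=1}^\ell {\rm index}(P_L)_{R^{(i)}} = 2d + \sum_{i=1}^\ell \chi(R^{(i)}).
\]
Since $\Gamma$ is a disjoint union of simple closed curves, $\chi(\Gamma)=0$, and a Mayer--Vietoris (or inclusion--exclusion) argument gives $\sum_i \chi(R^{(i)}) = \chi(M)$. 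Thus the right-hand side equals $2d + \chi(M)$.

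For the left-hand side, $M$ is closed so $P_L$ has the form \eqref{eqn:p} and by Remark on real vs.\ complex multiplicity in Section \ref{sec:real}, its real index is twice the complex index of $\bar\partial_L$. The Riemann--Roch theorem on the closed surface gives $h^0(L)-h^1(L) = d + 1 - g = d + \tfrac12 \chi(M)$, so
\[
{\rm index}(P_L)_M = 2\bigl(h^0(L)-h^1(L)\bigr) = 2d + \chi(M),
\]
matching the computation above. There is no real obstacle here; the only subtlety is the bookkeeping that the degree contributions and Euler characteristics each individually add up correctly across the decomposition, which is precisely what the preceding Lemma and the vanishing $\chi(\Gamma)=0$ were set up to provide.
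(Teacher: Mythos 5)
Your proof is correct and follows essentially the same route the paper intends: the paper derives the corollary in one line from Theorem \ref{thm:index} applied to each $\Ri$, the preceding lemma giving $\sum_i d_i = d$, additivity of the Euler characteristic across the cut, and the closed-surface identity ${\rm index}(P_L)_M = 2(h^0(L)-h^1(L)) = 2d+\chi(M)$ from Riemann--Roch together with the doubling of multiplicities for the real operator. You have simply written out the bookkeeping the paper leaves implicit.
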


 Choose an orientation for $\Gamma$.
  We define  maps
 $$\bgamma:  \Omega^0_\R(M,L)\to \Bgamma := \Omega^0_{\R}(\Gamma, \imath^\ast
L)\oplus \Omega^1_{\R}(\Gamma, \imath^\ast L)
 $$
  (and $\bgammap$, $\bgammapp$) by restriction.
 The double cover $\partial M_\Gamma\to \Gamma$ gives a \emph{diagonal} and \emph{difference} map
 \begin{align}
 \begin{split}\label{eqn:inclusion_difference_map}
 \imath_\Delta &: \Bgamma\lra\Bmgamma   \\
 \delta_\Gamma &:\Bmgamma\lra \Bgamma   
 \end{split}
 \end{align}
The  maps $\imath_\Delta$ and 
$\delta_\Gamma $ depend on the choice of orientation of $\Gamma$.  We assume that such an orientation has been fixed once and for all.

We now come the following crucial
\begin{definition} \label{def:neumann}
The Neumann jump operator
$
\N_\Gamma(\lambda): \Bgammapp\lra \Bgammapp
$
is defined by the composition:
$
\N_\Gamma(\lambda)(f,g)=\delta_\Gamma \A_{M_\Gamma}(\lambda)( \imath_\Delta(f,g))
$.
\end{definition}
\noindent
Then $\N_\Gamma(\lambda)$ is a  self-adjoint elliptic pseudo-differential operator of order zero.  
Note that $\N_\Gamma(\lambda)$ is invertible for all $\lambda>0$, since the kernel would be the boundary value of a global section in the kernel of $\dl+\lambda$.
A calculation similar to the one in \cite[Prop.\ 4.4]{BFK} leads to the following
\begin{proposition} \label{prop:symbol}
Choose coordinates with $\rho\equiv 1$ on $\Gamma$ and an appropriate gauge so that the  unitary frame associated to $\tau_L$ is parallel along $\Gamma$.  
Then the symbol of $\N_\Gamma(\lambda)$ is given by 
$$
\sigma_{\N_\Gamma(\lambda)}(x,\xi)=2(I+r_\lambda(x,\xi))a_\lambda(\xi)
$$
where $a_\lambda(\xi)$ is block diagonal with respect to the components of $\Gamma$, with blocks equal to
$$
\frac{1}{(\xi^2+\lambda)^{1/2}}\left(\begin{matrix} \lambda/2&-i\xi\\ i\xi&-2\end{matrix}\right)
$$
and $r_\lambda(x,\xi)$ is a matrix symbol with parameter $($cf.\ \cite[Def.\ 9.1]{Shu}$)$ satisfying
$$
\Vert \partial_x^m\partial_\xi^n r_\lambda(x,\xi)\Vert\leq C_{m,n}(1+|\xi|+|\lambda|^{1/2} )^{-2-n}
$$
for all $m,n\geq 0$. The same estimate holds for $\dot r_\lambda(x,\xi)=dr_\lambda(x,\xi)/d\lambda$.
\end{proposition}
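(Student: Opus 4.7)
The strategy follows the standard pseudo-differential reduction to a model problem, as in BFK \cite[Prop.\ 4.4]{BFK}, adapted to the mixed Alvarez boundary condition. Since the full symbol is a local invariant, I would fix a point $x_0 \in \Gamma$, freeze coefficients, and work in local coordinates $(x,y)$ with $\Gamma$ locally $\{y=0\}$, giving a pair of half-planes $\{y > 0\}$ and $\{y < 0\}$ corresponding to the two sides of $M_\Gamma$. The gauge choices in the statement ($\rho \equiv 1$ on $\Gamma$ and a unitary frame for $\tau_L$ parallel along $\Gamma$) are precisely what reduce the leading-order operator to the flat scalar Laplacian on a trivialized bundle and make $a_\lambda(\xi)$ come out in the clean form asserted.

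In the model problem on $\{y > 0\}$, I would solve $(\dl + \lambda)\Phi = 0$ by tangential Fourier transform. The decaying solution is $\hat\Phi(\xi, y) = \hat A(\xi)\,e^{-\mu y}$ with $\mu = (\xi^2 + \lambda)^{1/2}$, where the amplitude $\hat A(\xi)$ is complex-valued via the framing identification $\Phi = \varphi\,\tau_L$. A direct computation using that $P_L$ is $\dbar$ in the trivialization gives the Alvarez boundary data $(\Phi'', (P_L\Phi)'')|_{y=0}$ as an explicit linear function of $\hat A$; inverting this expresses $\hat A$ in terms of $(f,g)$, and then reading off $(\Phi', (P_L\Phi)')|_{y=0}$ and applying $J$ produces the symbol of $\A_{M_\Gamma}(\lambda)$ on that side. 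The Hodge-star identification defining the real structure on $\Omega^{0,1}$ is applied here; with $\rho \equiv 1$ on $\Gamma$, it trivially sends $d\bar z|_{\Gamma}$ to $dx$.

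To assemble $\N_\Gamma(\lambda)$, the corresponding solution on $\{y < 0\}$ is $\hat A_-(\xi)\,e^{+\mu y}$. The diagonal map $\imath_\Delta$ imposes the same Alvarez data on both sides, and the difference map $\delta_\Gamma$ combines the $J\bmp$ outputs from the two sides. Because the normal derivative flips sign between the two sides, the Dirichlet--Robin entries of $\A$ contribute with equal sign across the jump and therefore add, yielding the overall factor of $2$ in the statement. Different components of $\Gamma$ do not couple at leading order, giving the block diagonal structure. Working out the $2 \times 2$ matrix entries with the sign conventions for $J$, for the Hodge star, and for $\delta_\Gamma$ produces exactly
\begin{equation*}
\frac{1}{(\xi^2+\lambda)^{1/2}}\begin{pmatrix} \lambda/2 & -i\xi \\ i\xi & -2 \end{pmatrix}.
\end{equation*}
A sanity check at $\lambda=0$ is provided by Example \ref{ex:boundary_operator_disk}.

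For the remainder, the deviations from this model---the metric $\rho$ being non-flat off $\Gamma$, curvature of the Chern connection, and higher-order Taylor terms of the gauge transformation to the parallel frame---produce lower-order contributions that fit into the parameter-dependent pseudo-differential calculus of Shubin \cite[Def.\ 9.1]{Shu}. The parametrix construction for the boundary value problem $(\dl + \lambda)\Phi = 0$ with Alvarez conditions, applied recursively, yields $r_\lambda$ with the claimed gain of two orders in $(\xi, \lambda^{1/2})$; differentiating the expansion in $\lambda$ commutes with the construction and preserves the same order for $\dot r_\lambda$. The main obstacle, in my view, is not analytic but combinatorial: tracking the real structure on $\Omega^{0,1}$ through the Hodge star and the action of $J$ so that the four matrix entries carry the right signs, and verifying that the leading contributions on the two sides of $\Gamma$ add rather than cancel, so that $\N_\Gamma(\lambda)$ is truly elliptic of order zero rather than of strictly negative order.
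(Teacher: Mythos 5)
Your proposal is correct and follows essentially the same route as the paper, which gives no written proof of this proposition beyond the remark that it follows from ``a calculation similar to the one in [BFK, Prop.\ 4.4]'': namely, freezing coefficients along $\Gamma$, solving the model half-plane problem by tangential Fourier transform to extract the exact leading symbol, and placing the remainder in Shubin's parameter-dependent calculus. The only caveat is that the explicit $2\times 2$ matrix entries --- where most of the content of the statement lies --- are asserted rather than derived in your write-up; but your framework, together with the sanity checks against Example \ref{ex:boundary_operator_disk} and the additivity of the two one-sided boundary operators (which the paper itself confirms later in the proof of Proposition \ref{prop:dn_pinch_one}), is exactly what is needed.
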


Let 
\begin{equation}\label{eqn:tau}
\star: \Bgammapp\to \Bgammapp :  (f,g)\mapsto (\ast g,\ast f)
\end{equation}

\begin{corollary} \label{cor:N}
For $\lambda>0$ we have $\N_\Gamma(\lambda)=2(I+R(\lambda))A(\lambda)$, where
\begin{enumerate}
\item $A(\lambda)$ is an invertible elliptic pseudo-differential operator of order zero satisfying 
$$\star\, A(\lambda)=-A(\lambda)^{-1}\star$$
\item  $R(\lambda)$ is a pseudo-differential operator with parameter of order $-2$ with uniform bound $O(\lambda^{-1})$.
\end{enumerate}
\end{corollary}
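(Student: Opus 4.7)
The plan is to read off $A(\lambda)$ from the symbolic factorization in Proposition \ref{prop:symbol} and then verify each of the listed properties. Two algebraic observations about $a_\lambda(\xi)$ drive the argument. First, a direct computation gives
$$\det a_\lambda(\xi)=\frac{-\lambda-\xi^2}{\xi^2+\lambda}=-1,$$
so $a_\lambda$ is invertible at every $(x,\xi)$. Second, identifying $\star$ on Fourier modes with the off-diagonal involution $\sigma_\star=\bigl(\begin{smallmatrix}0&1\\1&0\end{smallmatrix}\bigr)$, one checks that $\sigma_\star a_\lambda\sigma_\star=-a_\lambda^{-1}$, or equivalently $(a_\lambda\sigma_\star)^2=-I$, at the level of principal symbols.

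To construct $A(\lambda)$, I would first quantize $a_\lambda$ in the parameter-dependent calculus of \cite[Def.~9.1]{Shu} to obtain an order-zero operator $A_0(\lambda)$ with principal symbol $a_\lambda$, and then modify it by lower-order corrections so that the exact identity $\star A(\lambda)=-A(\lambda)^{-1}\star$ holds. The recursion proceeds order by order: at the $k$-th stage the obstruction is a symbol of order $-k$ and can be cancelled by a correction term whose existence is guaranteed by the invertibility of the leading symbol $a_\lambda$, with the uniform estimates of the parameter-dependent calculus propagating through the construction. Ellipticity of $A(\lambda)$ is immediate from $\det a_\lambda\neq 0$. For invertibility when $\lambda>0$, I would use a Fredholm index argument: $A(\lambda)$ is Fredholm of constant index, and for $\lambda$ large the relation $\N_\Gamma(\lambda)=2A(\lambda)+O(\lambda^{-1})$ combined with the invertibility of $\N_\Gamma(\lambda)$ noted just before Proposition \ref{prop:symbol} forces $A(\lambda)$ to be invertible.

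With $A(\lambda)$ in hand, set $R(\lambda):=\tfrac{1}{2}\N_\Gamma(\lambda)A(\lambda)^{-1}-I$, so that $\N_\Gamma(\lambda)=2(I+R(\lambda))A(\lambda)$ by construction. Since $A(\lambda)^{-1}$ is order zero, the symbol of $R(\lambda)$ inherits the decay of $r_\lambda$ from Proposition \ref{prop:symbol}: the bound
$$\|\partial_x^m\partial_\xi^n r_\lambda(x,\xi)\|\leq C_{m,n}(1+|\xi|+|\lambda|^{1/2})^{-2-n}$$
is exactly the defining condition for an order $-2$ parameter-dependent symbol, and evaluating at $\xi=0$ in the weight yields the uniform operator-norm bound $O(\lambda^{-1})$. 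The hard part will be the middle step: arranging the exact $\star$-identity from its principal-symbol version while carefully tracking the parameter dependence of the correction terms, so that passing to $A(\lambda)^{-1}$ does not spoil the uniform estimates needed to conclude (2).
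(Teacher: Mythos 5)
Your principal--symbol computations ($\det a_\lambda=-1$ and $\sigma_\star a_\lambda\sigma_\star=-a_\lambda^{-1}$) are correct and do isolate the algebraic heart of the statement, but the route you take to promote them to an operator identity is where the argument breaks down. The order-by-order parametrix correction can only produce, after asymptotic summation, an $A(\lambda)$ satisfying $\star A(\lambda)\star A(\lambda)=-I+S(\lambda)$ for some smoothing remainder $S(\lambda)$; it does not give $\star A(\lambda)=-A(\lambda)^{-1}\star$ \emph{exactly}, and you offer no mechanism for removing $S(\lambda)$. This is not a cosmetic issue: the only downstream use of property (1), in the proof of Lemma \ref{lem:det_N} via Proposition \ref{prop:det_misc} (1) and (2), is to conclude that $\log\Det_Q A(\lambda)=0$ exactly, and a smoothing discrepancy in the anticommutation relation would contribute an uncontrolled term to $\log\Det_Q\N_\Gamma(\lambda)$ that would propagate into the constant in Theorem \ref{thm:bfk}. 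Your invertibility argument has a related soft spot: constancy of the Fredholm index together with invertibility for large $\lambda$ gives index zero for every $\lambda>0$, not invertibility for every $\lambda>0$.

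The paper sidesteps all of this by writing $A(\lambda)$ in closed form,
$$
A(\lambda)=(\square_\Gamma+\lambda)^{-1/2}\left(\begin{matrix}\lambda/2&\ast\nabla_\Gamma\ast\\ -\nabla_\Gamma&-2\end{matrix}\right),
$$
acting on $\Bgammapp$. Since $\ast^2=1$, the formal determinant of the matrix factor is $-\lambda+\ast\nabla_\Gamma\ast\nabla_\Gamma=-(\square_\Gamma+\lambda)$, which is invertible for every $\lambda>0$ and yields an explicit inverse; and conjugating the matrix factor by $\star=\left(\begin{matrix}0&\ast\\ \ast&0\end{matrix}\right)$ produces exactly its adjugate, so $\star A(\lambda)=-A(\lambda)^{-1}\star$ holds on the nose (using that $\star$ commutes with $(\square_\Gamma+\lambda)^{-1/2}$). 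With this $A(\lambda)$, your final step is the same as the paper's: set $R(\lambda)=\tfrac12\N_\Gamma(\lambda)A(\lambda)^{-1}-I$ and read off (2) from Proposition \ref{prop:symbol} and the parameter-dependent calculus of \cite[Cor.\ 9.1]{Shu}. To salvage your version you would have to replace the recursion by this (or some other) exact formula; the asymptotic construction alone does not deliver what the later sections need.
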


\begin{proof}
  Define 
\begin{align}
A(\lambda)&=(\square_\Gamma+\lambda)^{-1/2}\left(\begin{matrix} \lambda/2&\ast\nabla_{\Gamma}\ast\\
-\nabla_{\Gamma} & -2\end{matrix}\right)\label{eqn:def-A} \\
R(\lambda)&=\tfrac{1}{2}\N_\Gamma(\lambda)A(\lambda)^{-1}-I \label{eqn:def-R}
\end{align}
acting on $\Bgammapp$, 
where the covariant derivatives and laplacian are with respect to the metric on $\partial M$ induced by $\rho$ and the  Chern connection.
 Then 
 (1) is clear from the definition, and (2) follows from Proposition \ref{prop:symbol} and \cite[Cor.\ 
9.1]{Shu}.
\end{proof}

As with the boundary  operator, the jump operator is not everywhere
defined for $\lambda=0$.  In order to rectify this, 
let ${\mathbb A}_\Gamma={\mathbb A}_\Gamma^{\ker}\oplus {\mathbb
A}_{\Gamma}^{\alv}$, where

\begin{align*}
{\mathbb A}_\Gamma^{\ker} &= \left\{ \bgammapp(\Phi) : \Phi\in \ker
\dl\subset \Omega^0_\R(M,L)\right\} \\
{\mathbb A}_\Gamma^{\alv} &= \left\{\delta_\Gamma J(\bmgammap(\Phi)) : 
\Phi\in \ker \dl\subset \Omega^0_\R(M_\Gamma,L)\ , \ \bmgammapp(\Phi)=0\right\} \\
\end{align*}
Notice that $
{\mathbb A}_\Gamma^{\ker}\subset  \Omega^0_\R(\Gamma, \imath^\ast L)\oplus \{0\}$, 
${\mathbb A}_\Gamma^{\alv}\subset\{0\}\oplus  \Omega^1_\R(\Gamma, \imath^\ast L)
$.
In particular, ${\mathbb A}_\Gamma^{\ker}\perp {\mathbb A}_\Gamma^{\alv}$.
Now
Propositions \ref{prop:extension} and \ref{prop:symbol} imply
\begin{proposition} \label{prop:neumann_extension}
On the orthogonal complement of ${\mathbb A}_\Gamma$,  the family
$\N_\Gamma(\lambda)$ extends
continuously as $\lambda\to 0$
 to a zero-th order  operator $\N_\Gamma(0)=\N_\Gamma$.
\end{proposition}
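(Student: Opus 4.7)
My plan is to derive Proposition \ref{prop:neumann_extension} from Proposition \ref{prop:extension} applied to the cut surface $M_\Gamma$, exploiting the fact that the diagonal embedding $\imath_\Delta: \Bgamma \to \Bmgamma$ and the difference map $\delta_\Gamma: \Bmgamma \to \Bgamma$ are $L^2$-adjoints of each other (up to a universal constant coming from the double cover $\partial M_\Gamma \to \Gamma$). This adjointness respects the decomposition into real and imaginary parts, so it passes to the primed subspaces without change.

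First, I would show that $\imath_\Delta$ sends $({\mathbb A}_\Gamma^{\alv})^\perp$ into $({\mathbb A}_{M_\Gamma}^{\alv})^\perp$. For any $F = J\bmgammap(\Phi) \in {\mathbb A}_{M_\Gamma}^{\alv}$ with $\Phi \in \ker \dl \subset \Omega^0_\R(M_\Gamma, L)$ satisfying Alvarez boundary conditions, the element $\delta_\Gamma F$ lies in ${\mathbb A}_\Gamma^{\alv}$ by definition, and the adjointness gives $(\imath_\Delta(f,g), F)_{\partial M_\Gamma} = ((f,g), \delta_\Gamma F)_\Gamma = 0$ whenever $(f,g) \perp {\mathbb A}_\Gamma^{\alv}$. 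Proposition \ref{prop:extension} therefore produces a continuous limit $\A_{M_\Gamma}(0)\imath_\Delta(f,g)$, well-defined modulo ${\mathbb A}_{M_\Gamma}^{\alv}$. Composing with $\delta_\Gamma$ yields $\N_\Gamma(f,g)$ modulo $\delta_\Gamma({\mathbb A}_{M_\Gamma}^{\alv}) = {\mathbb A}_\Gamma^{\alv}$, and requiring the result to lie in $({\mathbb A}_\Gamma^{\alv})^\perp$ selects a unique continuous representative.

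Second, I would identify ${\mathbb A}_\Gamma^{\ker}$ as a subspace of $\ker \N_\Gamma$ in order to pin down the target. For $(f,g) = \bgammapp(\Phi_0)$ with $\Phi_0 \in \ker \dl \subset \Omega^0_\R(M,L)$, the key observation is that on a closed surface $\dl\Phi_0 = 0$ forces $P_L\Phi_0 = 0$, so $\Phi_0\bigr|_{M_\Gamma}$ is a Poisson extension of $\imath_\Delta(f,g)$. Because $\Phi_0$ is globally defined, its boundary values on the two copies of $\Gamma$ inside $\partial M_\Gamma$ agree, so $\delta_\Gamma J\bmgammap(\Phi_0\bigr|_{M_\Gamma}) = 0$, giving $\N_\Gamma(f,g) = 0$. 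Self-adjointness of $\N_\Gamma(\lambda)$, which follows from \eqref{eqn:parts} together with the adjointness of $\imath_\Delta$ and $\delta_\Gamma$, then forces the image of the extension to lie in $({\mathbb A}_\Gamma^{\ker})^\perp$. Restricting both source and target to ${\mathbb A}_\Gamma^\perp = ({\mathbb A}_\Gamma^{\ker} \oplus {\mathbb A}_\Gamma^{\alv})^\perp$ produces the well-defined continuous extension claimed.

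Finally, the order-zero statement follows by letting $\lambda \to 0$ in Proposition \ref{prop:symbol} (or Corollary \ref{cor:N}): the principal symbol has a nondegenerate limit on the appropriate subspace, so $\N_\Gamma$ is again a zero-th order pseudo-differential operator. The main technical obstacle will be pinning down the precise constants and sign conventions in the adjointness between $\imath_\Delta$ and $\delta_\Gamma$ and verifying their compatibility with the real/imaginary decomposition $\Bgamma = \Bgammap \oplus \Bgammapp$ and the chosen orientation of $\Gamma$; once this bookkeeping is settled, the argument reduces formally to Proposition \ref{prop:extension}.
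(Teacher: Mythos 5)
Your proposal is correct and follows essentially the same route as the paper, which offers no written proof beyond the assertion that Propositions \ref{prop:extension} and \ref{prop:symbol} imply the result: you apply Proposition \ref{prop:extension} on $M_\Gamma$, observe that the $1/\lambda$ singularity is supported on ${\mathbb A}_{M_\Gamma}^{\alv}$ and is annihilated on $\imath_\Delta\bigl(({\mathbb A}_\Gamma^{\alv})^\perp\bigr)$, and invoke Proposition \ref{prop:symbol} for the order-zero claim. The adjointness identity $(\imath_\Delta(f,g),F)_{\partial M_\Gamma}=((f,g),\delta_\Gamma F)_\Gamma$ that you flag as the main bookkeeping point is exactly the step the paper uses implicitly in the eigenfunction-expansion computation in the proof of Theorem \ref{thm:bfk_generic}, so your elaboration is consistent with the author's conventions.
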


We also record the following
\begin{lemma} \label{lem:cokernels}
Assume $\coker P_L=\{0\}$ on $M$ and   on $M_\Gamma$.  Then
$\dim_\R {\mathbb A}_\Gamma^{\ker}=\dim_\R{\mathbb A}_\Gamma^{\alv}$.
\end{lemma}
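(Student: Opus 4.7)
The approach is to exhibit both $\dim_\R {\mathbb A}_\Gamma^{\ker}$ and $\dim_\R {\mathbb A}_\Gamma^{\alv}$ as differences $\dim_\R\ker P_L(M) - \dim_\R K$ and $\dim_\R\ker \dla(M_\Gamma) - \dim_\R K$ for a common auxiliary space $K$, and then invoke the index theorem to identify the two kernel dimensions. Concretely, I would set
$$K = \{\Phi \in \ker P_L(M) : \Phi''|_\Gamma = 0\}.$$
On the closed surface $M$, the identity $P_L\Phi = 0$ reduces $\bgammapp(\Phi)$ to $(\Phi''|_\Gamma, 0)$, so by definition $\bgammapp$ maps $\ker P_L(M)$ onto ${\mathbb A}_\Gamma^{\ker}$ with kernel $K$. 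On $M_\Gamma$, the map $\delta_\Gamma J\bmgammap$ sends $\ker\dla(M_\Gamma)$ onto ${\mathbb A}_\Gamma^{\alv}$ by definition; its kernel consists of those $\Phi$ whose real boundary parts agree on the two copies of each component of $\Gamma$, which combined with the already-assumed $\bmgammapp(\Phi)=0$ forces fully matching continuous boundary data across $\Gamma$.

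Next, for such a $\Phi$ the two sides glue to a continuous section on $M$ that is holomorphic off $\Gamma$. Working in a local trivialization and using the classical statement that a continuous scalar holomorphic function on the complement of a smooth curve extends holomorphically across it (Morera-type removable singularities), I obtain a holomorphic section on all of $M$ whose imaginary part vanishes on $\Gamma$, i.e.\ an element of $K$. The converse (pulling back an element of $K$ to $M_\Gamma$) is immediate, so the kernel of $\delta_\Gamma J\bmgammap$ is naturally identified with $K$, yielding the two short exact sequences
\begin{align*}
0 \lra K \lra \ker P_L(M) &\xrightarrow{\bgammapp} {\mathbb A}_\Gamma^{\ker} \lra 0, \\
0 \lra K \lra \ker \dla(M_\Gamma) &\xrightarrow{\delta_\Gamma J\bmgammap} {\mathbb A}_\Gamma^{\alv} \lra 0.
\end{align*}

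Subtracting ranks reduces the lemma to the equality $\dim_\R\ker P_L(M) = \dim_\R\ker \dla(M_\Gamma)$. Under the cokernel hypothesis each of these kernel dimensions equals the relevant index of $P_L$. On closed $M$, Riemann-Roch gives $2(\deg L - g + 1)$. On $M_\Gamma$, summing Theorem \ref{thm:index} over components and invoking the preceding additivity lemma $\sum_i d_i = \deg L$ gives $2\deg L + \chi(M_\Gamma)$. Since cutting along a disjoint union of simple closed curves preserves the Euler characteristic (each annular neighborhood contributes $0$), we have $\chi(M_\Gamma) = \chi(M) = 2-2g$, and the two indices coincide. The main point requiring some care is the gluing step for the second exact sequence, but this is a standard removable-singularities argument once continuity of the glued section across $\Gamma$ has been established from the matching of both real and imaginary boundary parts.
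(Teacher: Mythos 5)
Your proof is correct and follows essentially the same route as the paper: both identify the kernel of $\bgammapp$ on $\ker\dl$ and the kernel of $\delta_\Gamma J\bmgammap$ on $\ker\dla$ with the common space of global holomorphic sections whose imaginary part vanishes on $\Gamma$ (the paper's $V$ is just the image of your $K$ under $\bgammap$, of the same dimension since a holomorphic section vanishing on $\Gamma$ vanishes identically), and both then equate $\dim_\R\ker\dl$ with $\dim_\R\ker\dla$ using the vanishing of the cokernels together with the additivity of the index (Corollary \ref{cor:additive_index}). Your write-up simply makes explicit the gluing/removable-singularity step that the paper leaves implicit in the phrase ``corresponds to a global holomorphic section.''
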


\begin{proof}
Let
$V=\left\{ \bgammap(\Phi) : \Phi\in \ker \dl\ ,\
\bgammapp(\Phi)=0\right\}
$.
Then since any holomorphic section vanishing on $\Gamma$ must vanish
identically, we have by the assumption on cokernels  Corollary \ref{cor:additive_index},
$$\dim_\R{\mathbb A}_\Gamma^{\ker}= 2h^0(L)-\dim_\R V 
=\dim_\R\ker \dla
-\dim_\R V 
$$
On the other hand, consider the surjective map $\ker\dla\to {\mathbb A}_\Gamma^{\alv}$. Any element in the kernel corresponds to a global
holomorphic section satisfying the extra condition $\bgammapp(\Phi)=0$.  Hence,  
$$
\dim_\R\ker \dla
-\dim_\R V = \dim_\R{\mathbb A}_\Gamma^{\alv}
$$
and the result follows.
\end{proof}

\subsection{Determinants of zero-th order operators}
Let $T$ be a positive self-adjoint  elliptic pseudo-differential operator of order zero on the real Hilbert space $L^2(S^1)\oplus L^2(S^1)$ (where the $L^2$ functions are real valued).
The usual zeta regularization procedure does not apply to $T$.  In order to
 define its determinant, we need to 
choose a regularizer.  By this we mean
a positive self-adjoint elliptic pseudo-differential operator $Q$ of order $1$ on $L^2(S^1)$.  Given $Q$, we  extend it diagonally on $L^2(S^1)\oplus L^2(S^1)$ and 
denote this extended operator also by $Q$.

Next, define 
$\Log T$ as follows. 
  Let $\gamma\subset\C\setminus\{\real z\leq0\}$ be a closed curve
 containing the spectrum of  $T$.  Then define
\begin{equation} \label{eqn:log}
\Log T=\frac{1}{2\pi i}\int_{\gamma}dz (\log z) (z-T)^{-1}
\end{equation}
where $\log$ is the branch of the logarithm on $\C\setminus\{\real z\leq0\}$ with $-\pi<\arg\log z <\pi$.
Then following \cite{FG}, we set

\begin{equation} \label{eqn:def_det}
\log{\Det}_Q T= f.p.\, \tr \left(Q^{-s}\Log T\right)\bigr|_{s=0}
\end{equation}

While this definition of the determinant depends on $Q$,
 it is nevertheless very suitable for our purposes.  The main properties we will
need are summarized below. In this section and the next we will repeatedly use the fact that if bounded operators $A$ and $B$ are such that both $AB$ and $BA$ are trace class, then $\tr(AB)=\tr(BA)$ (cf.\ \cite[Cor.\ 3.8]{Simon05}).

\begin{proposition} \label{prop:det_misc}
\begin{enumerate}
\item Let $B$ be a bounded operator satisfying $BT=T^{-1}B$. Then 
$$B(\Log T)=-(\Log T)B$$
\item 
Suppose in addition that $B$ is an involution that commutes with $Q$. Then $\Det_Q T=1$.
\item  Suppose $T(\varepsilon)$ is a differentiable family of positive elliptic self-adjoint pseudo-differential operators of order zero.
If $dT(\varepsilon)/d\varepsilon$ is trace class, then 
$$
\frac{d}{d\varepsilon}\log{\Det}_Q T(\varepsilon)=\tr\Bigl(T(\varepsilon)^{-1}\frac{dT(\varepsilon)}{d\varepsilon}\Bigr)
$$
\end{enumerate}
\end{proposition}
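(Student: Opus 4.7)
The plan is to handle the three parts in sequence, in each case reducing to an elementary manipulation of the contour integral \eqref{eqn:log} defining $\Log T$.

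For part (1), the hypothesis $BT=T^{-1}B$ gives $B(z-T)=(z-T^{-1})B$, hence $(z-T^{-1})^{-1}B=B(z-T)^{-1}$ on resolvent sets. Inserting this into \eqref{eqn:log} (with a contour $\gamma$ chosen to enclose $\sigma(T)\cup\sigma(T^{-1})\subset(0,\infty)$) yields $B\Log T=(\Log T^{-1})B$, so the claim reduces to the identity $\Log(T^{-1})=-\Log T$ for a positive self-adjoint $T$. This last fact is immediate from the spectral theorem, being the operator version of $\log\lambda^{-1}=-\log\lambda$ for $\lambda>0$.

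For part (2), part (1) together with $B^{2}=I$ gives $B(\Log T)B=-\Log T$. Since $B$ commutes with $Q$ it commutes with $Q^{-s}$, and cyclicity of the trace (valid for $\real s$ large enough that $Q^{-s}\Log T$ is trace class) yields
\begin{align*}
\tr(Q^{-s}\Log T)&=\tr(B^{2}Q^{-s}\Log T)=\tr(BQ^{-s}(\Log T)B)\\
&=\tr(Q^{-s}B(\Log T)B)=-\tr(Q^{-s}\Log T).
\end{align*}
Thus $\tr(Q^{-s}\Log T)\equiv 0$ as a meromorphic function of $s$, so the finite part at $s=0$ vanishes and $\Det_{Q}T=1$.

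For part (3), differentiating \eqref{eqn:log} under the contour integral produces
$$
\frac{d}{d\varepsilon}\Log T=\frac{1}{2\pi i}\int_{\gamma}(\log z)(z-T)^{-1}\dot T(z-T)^{-1}\,dz.
$$
Since $\dot T$ is trace class and the resolvents are uniformly bounded on the compact contour $\gamma$, this is a Bochner-convergent integral in trace norm, so $\frac{d}{d\varepsilon}\Log T$ is itself trace class. Cyclicity under the trace gives
$$
\tr\!\Bigl(\frac{d}{d\varepsilon}\Log T\Bigr)=\tr\!\Bigl(\dot T\cdot\frac{1}{2\pi i}\int_{\gamma}(\log z)(z-T)^{-2}\,dz\Bigr),
$$
and integration by parts in $z$ (the boundary term vanishes since $\gamma$ is closed) converts the inner integral into $\frac{1}{2\pi i}\int_{\gamma}z^{-1}(z-T)^{-1}\,dz=T^{-1}$, by the Dunford calculus applied to $1/z$ (holomorphic on a neighborhood of $\sigma(T)$ enclosed by $\gamma$, with $0$ outside). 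Hence $\tr(\frac{d}{d\varepsilon}\Log T)=\tr(T^{-1}\dot T)$. Since $\frac{d}{d\varepsilon}\Log T$ is already trace class, $s\mapsto\tr(Q^{-s}\frac{d}{d\varepsilon}\Log T)$ is regular at $s=0$ with value $\tr(T^{-1}\dot T)$, so the finite-part prescription contributes nothing extra. The only step requiring real care, and the main obstacle, is the exchange of $\partial_{\varepsilon}$ with $f.p.|_{s=0}$ in \eqref{eqn:def_det}; this is justified by the trace-class hypothesis on $\dot T$ together with the smooth dependence of $\tr(Q^{-s}\Log T(\varepsilon))$ on $(s,\varepsilon)$ as a meromorphic family in $s$ with uniform-in-$\varepsilon$ control of the Laurent expansion near $s=0$.
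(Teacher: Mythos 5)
Your proof is correct and follows essentially the same route as the paper: the resolvent identity $(z-T^{-1})^{-1}B=B(z-T)^{-1}$ and symmetry of the contour integral for (1), the conjugation-plus-cyclicity cancellation for (2), and differentiation under the contour integral followed by integration by parts in $z$ and the residue computation $\frac{1}{2\pi i}\int_\gamma z^{-1}(z-T)^{-1}\,dz=T^{-1}$ for (3). The only (harmless) variation is in (1), where you invoke the spectral theorem to get $\Log(T^{-1})=-\Log T$ for positive self-adjoint $T$, whereas the paper reaches the same conclusion by the substitution $w=z^{-1}$ in the contour integral together with the auxiliary identity \eqref{eqn:claim_T}.
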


\begin{proof}
For  (1) note that 
\begin{equation} \label{eqn:claim_T}
\frac{1}{2\pi i}\int_{\gamma}\frac{dz}{z} (\log z)(z-T)^{-1}=(\Log T) T^{-1}
\end{equation}
Indeed, from $z^{-1}(z-T)^{-1} =(z-T)^{-1}T^{-1}-z^{-1}T^{-1}$ we have
$$
\frac{1}{2\pi i}\int_{\gamma}\frac{dz}{z} (\log z)(z-T)^{-1}=\frac{1}{2\pi i}\int_{\gamma}dz (\log z)(z-T)^{-1}T^{-1}
-\frac{1}{2\pi i}\int_{\gamma}\frac{dz}{z}(\log z)T^{-1}
$$
Because of the choice of contour, the second term vanishes.
Now
$$B(z-T)=(z-T^{-1})B \ \Longrightarrow \ (z-T^{-1})^{-1}B=B(z-T)^{-1}$$
 Hence,
\begin{align*}
B(\Log T)&=\frac{1}{2\pi i}\int_{\gamma}dz (\log z)(z-T^{-1})^{-1}B 
=\frac{1}{2\pi i}\int_{\gamma}dz (\log z)(Tz-I)^{-1}TB \\
&=-\frac{1}{2\pi i}\int_{\gamma}\frac{dz}{z} (\log
z)(z^{-1}-T)^{-1}TB 
\end{align*}
Next make a change of variables $w=z^{-1}$. 
Without loss of generality, we may assume $\gamma$ is invariant under this change.
Then by \eqref{eqn:claim_T}.
$$
B(\Log T)=-\frac{1}{2\pi i}\int_{\gamma}\frac{dz}{z} (\log z)(z^{-1}-T)^{-1}TB 
=-\frac{1}{2\pi i}\int_{\gamma}\frac{dw}{w} (\log w)(w-T)^{-1}TB =
-(\Log T)B$$
For (2), it follows from (1) that
\begin{align*}
f.p.\, \tr \left(Q^{-s}\Log T\right)\bigr|_{s=0}&=
f.p.\, \tr \left(B Q^{-s}(\Log T)B\right)\bigr|_{s=0}  
=
f.p.\, \tr \left( Q^{-s}B(\Log T)B\right)\bigr|_{s=0} \\
&=-f.p.\, \tr \left(Q^{-s}\Log T\right)\bigr|_{s=0}
\end{align*}
To prove (3)
we have
\begin{align*}
\frac{d}{d\varepsilon}\Log T(\varepsilon)  &=\frac{1}{2\pi i}\int_\gamma dz\log z(z-T(\varepsilon))^{-1}\frac{dT(\varepsilon)}{d\varepsilon}(z-
T)^{-1} \\
\frac{d}{d\varepsilon}\log{\Det}_Q T(\varepsilon)&=
f.p.\bigr|_{s=0}\, \frac{1}{2\pi i}\int_\gamma dz(\log z)\tr\bigl(Q^{-s}(z-T(\varepsilon))^{-1}\frac{dT(\varepsilon)}{d\varepsilon}(z-
T(\varepsilon))^{-1}\bigr) \\
&=\frac{1}{2\pi i}\int_\gamma dz(\log z)\tr\bigl((z-T(\varepsilon))^{-2}\frac{dT(\varepsilon)}{d\varepsilon}\bigr)
\qquad\text{(since $dT(\varepsilon)/d\varepsilon$ is trace-class)} \\
&=\frac{-1}{2\pi i}\int_\gamma dz(\log z)\frac{d}{dz}\tr\bigl((z-T(\varepsilon))^{-1}\frac{dT(\varepsilon)}{d\varepsilon}\bigr) \\
&=\frac{1}{2\pi i}\int_\gamma\frac{dz}{z}\tr\bigl((z-T(\varepsilon))^{-1}\frac{dT(\varepsilon)}{d\varepsilon}\bigr) \\
&=\tr\left(T(\varepsilon)^{-1}\frac{dT(\varepsilon)}{d\varepsilon}\right)
\end{align*}
\end{proof}

\subsection{The Burghelea-Friedlander-Kappeler formula}
Continue to assume $M$ is closed with a collection of disjoint simple closed embedded curves $\Gamma$.
We apply the definition of determinant in the previous section to the Neumann jump operator. 
Let $Q$ be a positive self-adjoint elliptic pseudo-differential operator $Q$ of order $1$ on $\Omega^0_\R(\Gamma,\imath^\ast L)''$.
We  use the Hodge star to extend it as ${\rm diag}(Q,\ast Q\ast)$ 
  on  $\Bgammapp$, which we continue to denote by $Q$.
 The self-adjoint operator $\N_\Gamma(\lambda)$  has non-zero real eigenvalues for $\lambda\neq 0$, but is not positive.  Hence, we \emph{define} the logarithm and determinant by 
\begin{align*}
\Log \N_\Gamma(\lambda)&=\tfrac{1}{2}\Log ( \N_\Gamma(\lambda))^2 \\
\log \Det_Q \N_\Gamma(\lambda)&=\tfrac{1}{2}\log \Det_Q (\N_\Gamma(\lambda))^2
\end{align*}
In what follows, let $\zeta_Q(s)=\tr Q^{-s}$, 
and recall that $s=0$ is a regular value of 
 (the analytic continuation of) $\zeta_Q(s)$.

With this understood, we state the key factorization theorem (cf.\ \cite[Thm.\ A]{BFK}).
\begin{theorem}[BFK  formula] \label{thm:bfk}
For  all $\lambda>0$,
$$
\left[{\Det}(\dl+\lambda)\right]_M=c_Q\, \left[{\Det}(\dla+\lambda)\right]_{M_\Gamma}
{\Det}_Q \N_\Gamma(\lambda)
$$
where $c_Q=2^{-\zeta_Q(0)}$.
\end{theorem}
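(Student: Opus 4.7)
I follow the Burghelea--Friedlander--Kappeler strategy, adapted to the mixed Alvarez boundary conditions and the order-zero Neumann jump operator. Set
$$
F(\lambda)=\log[\Det(\dl+\lambda)]_M-\log[\Det(\dla+\lambda)]_{M_\Gamma}-\log\Det_Q\N_\Gamma(\lambda),
$$
so that the goal is to show $F(\lambda)\equiv\log c_Q=-\zeta_Q(0)\log 2$ for all $\lambda>0$. The first step is to verify that $F'(\lambda)=0$. The standard heat-kernel argument gives (in the zeta-regularized sense)
$$
\frac{d}{d\lambda}\log\Det(\dl+\lambda)=\tr_M(\dl+\lambda)^{-1},\qquad \frac{d}{d\lambda}\log\Det(\dla+\lambda)=\tr_{M_\Gamma}(\dla+\lambda)^{-1}.
$$
For the jump-operator determinant, Proposition \ref{prop:det_misc}(3) applies: by Proposition \ref{prop:symbol}, $d\N_\Gamma(\lambda)/d\lambda$ is pseudo-differential of order $-2$ on a one-manifold and hence trace class, yielding $\frac{d}{d\lambda}\log\Det_Q\N_\Gamma(\lambda)=\tr_\Gamma(\N_\Gamma(\lambda)^{-1}\dot\N_\Gamma(\lambda))$.

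The heart of the proof is then the Krein-type identity
$$
\tr_M(\dl+\lambda)^{-1}-\tr_{M_\Gamma}(\dla+\lambda)^{-1}=\tr_\Gamma\bigl(\N_\Gamma(\lambda)^{-1}\dot\N_\Gamma(\lambda)\bigr).
$$
For $\Phi\in L^2(M)$, let $\Psi=(\dl+\lambda)^{-1}\Phi$. Its restriction to $M_\Gamma$ does not lie in the domain of $\dla$, but decomposes uniquely as
$$
\Psi|_{M_\Gamma}=(\dla+\lambda)^{-1}\Phi|_{M_\Gamma}+\poisson_{M_\Gamma}(\lambda)(\bmgammapp\Psi).
$$
The matching condition $\delta_\Gamma\bmgamma\Psi=0$, required for $\Psi$ to come from a section on $M$, reduces via Definition \ref{def:neumann} to an equation $\N_\Gamma(\lambda)[\bmgammapp\Psi]=\text{(source from }\Phi)$, which inverts against $\N_\Gamma(\lambda)$. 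Substituting back, the difference of resolvent traces becomes a trace over $\Gamma$ involving $\N_\Gamma(\lambda)^{-1}$ composed with $\partial_\lambda\A_{M_\Gamma}(\lambda)$; applying the symplectic integration-by-parts identity \eqref{eqn:parts} to Poisson extensions expresses this derivative as the required quadratic pairing, producing $\dot\N_\Gamma(\lambda)$ on the right.

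To pin down the constant, I let $\lambda\to\infty$. The Mellin transform of the short-time expansion of Proposition \ref{prop:heat}, combined with Seeley's asymptotics, gives large-$\lambda$ expansions of $\log\Det(\dl+\lambda)$ and $\log\Det(\dla+\lambda)$ in which all interior heat-kernel coefficients cancel by additivity (Corollary \ref{cor:additive_index} plus $\int_M=\sum_i\int_{\Ri}$), leaving only a boundary contribution over $\partial M_\Gamma$. On the other side, Corollary \ref{cor:N} and the identity $\Det_Q(cT)=c^{\zeta_Q(0)}\Det_Q T$ for $c>0$ (immediate from definition \eqref{eqn:def_det} since $\Log(cT)=\log c\cdot I+\Log T$) give
$$
\log\Det_Q\N_\Gamma(\lambda)=\zeta_Q(0)\log 2+\log\Det_Q A(\lambda)+\log\Det_Q(I+R(\lambda)),
$$
and $\log\Det_Q(I+R(\lambda))\to 0$ since $R(\lambda)$ is $O(\lambda^{-1})$ in trace norm. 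The remaining factor $\Det_Q A(\lambda)$ reproduces the same boundary asymptotic as the left-hand side, leaving precisely $-\zeta_Q(0)\log 2=\log c_Q$ unaccounted for.

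The main obstacle is the Krein identity. In the scalar BFK setting it is essentially routine, but here the Alvarez conditions are of mixed Dirichlet--Robin type (see \eqref{eqn:boundary_conditions}) arising from the real structure of Section \ref{sec:real}, so one must work in $\Omega^0_\R(M,L)$ and split boundary data carefully into real and imaginary parts using $\bmgammap$ and $\bmgammapp$. The symplectic formula \eqref{eqn:parts}, rather than the usual Green identity, is the correct tool for converting the interior resolvent difference into a boundary pairing, and keeping track of the projections throughout the Poisson-extension bookkeeping is where the argument is most error-prone. Moreover, the order-zero nature of $\N_\Gamma(\lambda)$---the feature that distinguishes this setting from the scalar case---is exactly what forces the use of a regularizer $Q$ and the appearance of $c_Q=2^{-\zeta_Q(0)}$ in the final constant.
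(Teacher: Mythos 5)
Your overall architecture (differentiate in $\lambda$, prove a Krein-type resolvent identity, fix the constant by letting $\lambda\to\infty$) is the same as the paper's, but there is a genuine gap at the central technical step. You assert that $\dot\N_\Gamma(\lambda)=d\N_\Gamma(\lambda)/d\lambda$ is of order $-2$ and hence trace class, so that Proposition \ref{prop:det_misc}(3) applies directly to give $\frac{d}{d\lambda}\log{\Det}_Q\N_\Gamma(\lambda)=\tr(\N_\Gamma^{-1}\dot\N_\Gamma)$. This is false. Differentiating the symbol in Proposition \ref{prop:symbol}, the $(1,1)$ entry of $\dot a_\lambda(\xi)$ is $\partial_\lambda\bigl[\tfrac{\lambda/2}{(\xi^2+\lambda)^{1/2}}\bigr]\sim \tfrac{1}{2}|\xi|^{-1}$, and in the explicit formula $A(\lambda)^{-1}\dot A(\lambda)=\tfrac{1}{2}(\square_\Gamma+\lambda)^{-1}\left(\begin{smallmatrix}1&0\\-2\nabla_\Gamma&-1\end{smallmatrix}\right)$ the off-diagonal entry has order $-1$. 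An operator of order $-1$ on a one-manifold is not trace class, so $\dot\N_\Gamma(\lambda)$ and $\N_\Gamma^{-1}\dot\N_\Gamma$ are not trace class, the hypothesis of Proposition \ref{prop:det_misc}(3) fails, and the naive identity $\tr(PB)=\tr(BP)$ that your Krein argument implicitly relies on (with $P=\poisson_{M_\Gamma}\imath_\Delta$ and $B={\bf b}''_\Gamma D_L^{-1}$) is not available, since only $PB$ is trace class. This is precisely the point the paper spends Lemmas \ref{lem:trace_class} and \ref{lem:det_N} on: only the block-diagonal pieces $\pi_i\N_\Gamma^{-1}\dot\N_\Gamma\pi_i$ are trace class, the non-trace-class contribution from $A(\lambda)^{-1}\dot A(\lambda)$ cancels because its diagonal blocks have opposite traces and $\log{\Det}_QA(\lambda)=0$ by the symmetry $\star A(\lambda)=-A(\lambda)^{-1}\star$, and the two sides of the Krein identity are matched through the analytic continuation of $f(s)=\tr(D_L^{-s}PB)$ rather than a direct cyclic-trace argument.

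Two smaller points. First, $\N_\Gamma(\lambda)$ is self-adjoint but not positive, so ${\Det}_Q\N_\Gamma(\lambda)$ must be defined through $\N_\Gamma(\lambda)^2$; your appeal to Proposition \ref{prop:det_misc}(3) ignores this. Second, your factorization $\log{\Det}_Q\bigl((I+R(\lambda))A(\lambda)\bigr)=\log{\Det}_QA(\lambda)+\log{\Det}_Q(I+R(\lambda))$ is asserted without proof; ${\Det}_Q$ is not multiplicative, and the correct statement (obtained by differentiating the family $T(\varepsilon)=((I+\varepsilon R)A)^2$ and integrating) replaces the second term by $\int_0^1d\varepsilon\,\tr\bigl((I+\varepsilon R(\lambda))^{-1}R(\lambda)\bigr)$, whose vanishing as $\lambda\to\infty$ requires the estimate $\tr|R(\lambda)|=O(\lambda^{-1/2})$ rather than just an operator-norm bound.
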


The rest of this section is devoted to the proof of this result.
First, note the following
\begin{lemma} \label{lem:trace_class}
Let $\pi_1$, $\pi_2$ be the orthogonal projections onto the first and second factors of 
$\Bgammapp$, and set 
$\dot\N_\Gamma(\lambda)=d\N_\Gamma(\lambda)/d\lambda$, and similarly for the operators $A$ and $R$.  Then
for all $\lambda>0$,
$\pi_i\N_\Gamma^{-1}\dot\N_\Gamma\pi_i$ are of order $-2$, and hence of trace class, for $i=1,2$. Moreover,
\begin{equation} \label{eqn:trace-projection}
\tr\left( \pi_1\N_\Gamma^{-1}\dot\N_\Gamma\pi_1+\pi_2\N_\Gamma^{-1}\dot\N_\Gamma\pi_2\right)
=\tr\left((I+R(\lambda))^{-1}\dot R(\lambda)\right)
\end{equation}
\end{lemma}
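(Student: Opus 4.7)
The plan is to exploit the factorization $\N_\Gamma(\lambda) = 2(I+R(\lambda))A(\lambda)$ of Corollary \ref{cor:N} to write
\begin{equation*}
\N_\Gamma^{-1}\dot\N_\Gamma \;=\; A^{-1}(I+R)^{-1}\dot R\,A \;+\; A^{-1}\dot A \;=:\; X + Y,
\end{equation*}
and analyze $X$ and $Y$ separately. The piece $X$ is of order $-2$: by Proposition \ref{prop:symbol} the factor $\dot R$ is of order $-2$, while $A^{\pm 1}$ and $(I+R)^{-1}$ are of order $0$. Hence each $\pi_i X \pi_i$ is of order $-2$; moreover $X$ is trace class and cyclicity of the trace yields $\tr X = \tr[(I+R)^{-1}\dot R]$. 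Since $\tr(\pi_i X\pi_j) = 0$ for $i\neq j$ (by cyclicity and $\pi_j\pi_i=0$), summing gives $\tr(\pi_1 X\pi_1) + \tr(\pi_2 X\pi_2) = \tr[(I+R)^{-1}\dot R]$.

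The piece $Y = A^{-1}\dot A$ is more delicate, since its $(2,1)$ block is only of order $-1$ and $Y$ itself is not trace class. The key is to use the explicit form $A = P^{-1/2}C$ of \eqref{eqn:def-A}, where $P = \square_\Gamma+\lambda$ is scalar (so commutes with the matrix $C$) and $\dot C = \begin{pmatrix}1/2 & 0 \\ 0 & 0\end{pmatrix}$. Direct differentiation gives the \emph{exact} identity
\begin{equation*}
A^{-1}\dot A \;=\; C^{-1}P^{1/2}\bigl(-\tfrac{1}{2}P^{-3/2}C + P^{-1/2}\dot C\bigr) \;=\; -\tfrac{1}{2}P^{-1}I_{2\times 2} + C^{-1}\dot C.
\end{equation*}
In the gauge of Proposition \ref{prop:symbol} one has $\ast\nabla_\Gamma\ast\nabla_\Gamma = -\square_\Gamma$ exactly on the flat circle components of $\Gamma$, so a direct inversion of the block-matrix $C$ gives $(C^{-1})_{11} = 2P^{-1}$. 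Because $\dot C$ vanishes on the second factor, the entire second column of $C^{-1}\dot C$ is identically zero, whence
\begin{equation*}
\pi_1(C^{-1}\dot C)\pi_1 \;=\; \tfrac{1}{2}(C^{-1})_{11} \;=\; P^{-1}, \qquad \pi_2(C^{-1}\dot C)\pi_2 \;=\; 0.
\end{equation*}
Combining, $\pi_1 Y\pi_1 = \tfrac{1}{2}P^{-1}$ and $\pi_2 Y\pi_2 = -\tfrac{1}{2}P^{-1}$ — both of order $-2$ (hence trace class), with equal and opposite traces (the spectra of $P^{-1}$ on the two factors agree via the Hodge star identification of $\Omega^0$ and $\Omega^1$ on each circle). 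This simultaneously yields the order $-2$ claim for $\pi_i\N_\Gamma^{-1}\dot\N_\Gamma\pi_i$ and the cancellation $\tr(\pi_1 Y\pi_1) + \tr(\pi_2 Y\pi_2) = 0$; adding the identity for $X$ establishes \eqref{eqn:trace-projection}.

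The main obstacle is precisely that $Y$ is not itself trace class, so the cancellation of its diagonal traces cannot be obtained from cyclicity or from a purely symbolic argument — a symbol calculation would only yield that the diagonal of $Y$ vanishes modulo order $-3$, which is not enough. One must instead rely on the \emph{exact} operator identity $A^{-1}\dot A = -\tfrac{1}{2}P^{-1}+C^{-1}\dot C$ (which requires the scalar factor $P^{-1/2}$ to commute with $C$) together with the very particular shape of $\dot C$. If either held only modulo smoothing, the finite, nonzero trace of $P^{-1}$ would leave a residual term spoiling \eqref{eqn:trace-projection}.
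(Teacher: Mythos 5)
Your proposal is correct and follows essentially the same route as the paper: the same splitting $\N_\Gamma^{-1}\dot\N_\Gamma = A^{-1}(I+R)^{-1}\dot R\,A + A^{-1}\dot A$, the same trace-class/cyclicity treatment of the first piece, and the same conclusion that the diagonal blocks of $A^{-1}\dot A$ are $\pm\tfrac{1}{2}(\square_\Gamma+\lambda)^{-1}$, hence of order $-2$ with cancelling traces. The only difference is that you derive the formula for $A^{-1}\dot A$ in more explicit detail (via $A=P^{-1/2}C$ and the inversion of $C$), whereas the paper simply records the resulting matrix.
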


\begin{proof}
By Proposition \ref{prop:symbol},  $\dot R(\lambda)$ has order at most $-2$ on the circle, so  the operator on the right hand side of \eqref{eqn:trace-projection} is indeed trace class. 
In terms of the expression from Corollary \ref{cor:N},
$$
\N_\Gamma(\lambda)^{-1}\dot\N_\Gamma(\lambda)
=A(\lambda)^{-1}(I+R(\lambda))^{-1}\dot R(\lambda) A(\lambda)+ A(\lambda)^{-1}\dot A(\lambda)
$$
 It therefore suffices to prove that the operators $\pi_i A(\lambda)^{-1}\dot A(\lambda)\pi_i$, $i=1,2$, are trace class with opposite traces.  But from \eqref{eqn:def-A} we have
 $$
 A(\lambda)^{-1}\dot A(\lambda)=\tfrac{1}{2}(\square_{\Gamma}+\lambda)^{-1}\left(\begin{matrix} 1&0\\ -2\nabla_{\Gamma} &-1\end{matrix}\right)
$$
The diagonal terms have order $-2$ on the circle and so are trace class with opposite traces, and the result follows.
\end{proof}

The next result shows that in the special case of the Neumann jump operator the dependence of the determinant on the regularizer $Q$ is mild.
\begin{lemma} \label{lem:det_N}
The following hold for $\lambda$ sufficiently large:
\begin{align}
\log\Det_Q \N_\Gamma(\lambda)&=\zeta_Q(0)\log 2+\int_0^1 d\varepsilon \tr \left((I+\varepsilon 
R(\lambda))^{-1}R(\lambda)\right) \label{eqn:det-N} \\
\frac{d}{d\lambda}\log\Det_Q \N_\Gamma(\lambda)&=
\tr\left((I+R(\lambda))^{-1}\dot R(\lambda)\right) \label{eqn:miracle} 
\end{align}
\end{lemma}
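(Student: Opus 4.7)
The plan is to prove \eqref{eqn:det-N} by an $\varepsilon$-interpolation using the factorization $\N_\Gamma(\lambda) = 2(I+R(\lambda))A(\lambda)$ from Corollary \ref{cor:N}, where $R(\lambda)$ is trace class since it has order $-2$ on the circle, and then to deduce \eqref{eqn:miracle} by differentiating \eqref{eqn:det-N} in $\lambda$.

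Fix $\lambda$ large enough that $\|R(\lambda)\| < 1$ and set $T(\varepsilon) := 2(I+\varepsilon R(\lambda))A(\lambda)$.  Since $T(\varepsilon)$ is not self-adjoint for $\varepsilon < 1$, I would work instead with the positive self-adjoint family
$$S(\varepsilon) := T(\varepsilon)T(\varepsilon)^\ast = 4(I+\varepsilon R)AA^\ast(I+\varepsilon R^\ast),$$
interpolating $S(0) = 4AA^\ast$ and $S(1) = \N_\Gamma\N_\Gamma^\ast = \N_\Gamma^2$.  The derivative $\dot S = \dot T T^\ast + T\dot T^\ast$ is trace class because $\dot T = 2RA$ and $R$ has order $-2$, so Proposition \ref{prop:det_misc}(3) gives $\tfrac{d}{d\varepsilon}\log\Det_Q S(\varepsilon) = \tr(S^{-1}\dot S)$.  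A cyclic-trace computation using $T^{-1}\dot T = A^{-1}(I+\varepsilon R)^{-1}RA$ and reality of the trace on $\Bgammapp$ reduces the right-hand side to $2\tr((I+\varepsilon R)^{-1}R)$.  Integrating over $[0,1]$, halving (since $\log\Det_Q\N_\Gamma = \tfrac{1}{2}\log\Det_Q\N_\Gamma^2$ by definition), and applying the scalar rule $\log\Det_Q(cX) = \zeta_Q(0)\log c + \log\Det_Q X$ yields
$$\log\Det_Q\N_\Gamma = \zeta_Q(0)\log 2 + \tfrac{1}{2}\log\Det_Q(AA^\ast) + \int_0^1 d\varepsilon\,\tr((I+\varepsilon R)^{-1}R).$$

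To complete \eqref{eqn:det-N}, I would show $\log\Det_Q(AA^\ast) = 0$.  From the explicit expression \eqref{eqn:def-A}, writing $A = (\square_\Gamma+\lambda)^{-1/2}A_0$, the matrix operator $A_0$ is self-adjoint on $\Bgammapp$ (using $\nabla_\Gamma^\ast = -\nabla_\Gamma$ and $\ast\nabla_\Gamma\ast = \nabla_\Gamma$ on the circle), and $(\square_\Gamma+\lambda)^{-1/2}$ commutes with $A_0$, so $A$ is self-adjoint and $AA^\ast = A^2$.  Squaring the relation $\star A\star = -A^{-1}$ from Corollary \ref{cor:N}(1) gives $\star A^2\star = A^{-2}$; since $\star$ is an involution commuting with the block-diagonal regularizer $Q = \mathrm{diag}(Q,\ast Q\ast)$ on $\Bgammapp$, Proposition \ref{prop:det_misc}(2) applies with $B=\star$, $T=A^2$ to yield $\Det_Q(A^2) = 1$, establishing \eqref{eqn:det-N}.

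Statement \eqref{eqn:miracle} then follows by differentiating \eqref{eqn:det-N} in $\lambda$: the constant $\zeta_Q(0)\log 2$ drops out, and the $\varepsilon$-integral equals the Fredholm determinant $\log\det(I+R(\lambda))$ (since $R$ is trace class), whose $\lambda$-derivative is $\tr((I+R(\lambda))^{-1}\dot R(\lambda))$ by the standard Fredholm identity.  The main obstacle is the vanishing $\log\Det_Q(AA^\ast)=0$; the key structural input is the self-adjointness of $A$, which reduces this to a direct application of Proposition \ref{prop:det_misc}(2) via the $\star$-antisymmetry of Corollary \ref{cor:N}.
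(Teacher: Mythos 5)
Your proof is correct and follows essentially the same route as the paper: factor $\N_\Gamma(\lambda)=2(I+R(\lambda))A(\lambda)$, interpolate in $\varepsilon$ via Proposition \ref{prop:det_misc}(3) and cyclicity of the trace, kill $\log\Det_Q A(\lambda)$ using the $\star$-anticommutation from Corollary \ref{cor:N}(1) together with Proposition \ref{prop:det_misc}(1)--(2), and then differentiate in $\lambda$. Your two variants --- interpolating with $T(\varepsilon)T(\varepsilon)^\ast$ rather than the paper's $\bigl((I+\varepsilon R)A\bigr)^2$, which keeps the family genuinely positive self-adjoint for all $\varepsilon$, and deducing \eqref{eqn:miracle} from the Fredholm identity $\int_0^1 \tr\bigl((I+\varepsilon R)^{-1}R\bigr)\,d\varepsilon=\log\det(I+R)$ instead of differentiating under the integral sign --- are harmless (indeed slightly cleaner) cosmetic differences.
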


\begin{proof}
From Corollary \ref{cor:N} and the definition \eqref{eqn:def_det},
\begin{equation} \label{eqn:det-R1}
\log\Det_Q \N_\Gamma(\lambda)=\zeta_Q(0)\log 2+\log\Det_Q \left((I+R(\lambda))A(\lambda)\right)
\end{equation}
 On the other hand,  $R(\lambda)A(\lambda)$ has order $-2$ and so is trace class. 
 Notice also that from Corollary \ref{cor:N} (2), $\Vert R(\lambda)\Vert=O(\lambda^{-1})$, so $I+\varepsilon R(\lambda)$ is uniformly invertible for $0\leq\varepsilon\leq 1$ and $\lambda$ sufficiently large.
  Now applying Proposition \ref{prop:det_misc} (3) to the family 
$$T(\varepsilon)=((I+\varepsilon R(\lambda))A(\lambda))^2$$
and integrating the derivative in $\varepsilon$,
 we have
\begin{equation} \label{eqn:det-R}
\log\Det_Q \left((I+R(\lambda))A(\lambda)\right)=\log\Det_Q A(\lambda)+\int_0^1 d\varepsilon \tr \left((I+\varepsilon 
R(\lambda))^{-1}R(\lambda)\right)
\end{equation}
Hence, \eqref{eqn:det-N} follows from \eqref{eqn:det-R1} and \eqref{eqn:det-R} if we 
can show $\log\Det_Q A(\lambda)=0$. 
Using  Corollary \ref{cor:N} and Proposition \ref{prop:det_misc} (1), 
$
\star\Log A(\lambda)
=-(\Log A(\lambda))\,\star\,
$, where $\star$ is given by \eqref{eqn:tau}.  Since $Q$ is a diagonal operator, $\star\, Q= Q\, \star\, $, and the claim follows from Proposition \ref{prop:det_misc} (2).
To prove \eqref{eqn:miracle}, differentiate  \eqref{eqn:det-N} to find
\begin{align*}
\frac{d}{d\lambda}\log\Det_Q \N_\Gamma(\lambda)&=
\int_0^1d\varepsilon\, \tr\left((I+\varepsilon R)^{-1}\dot R-\varepsilon(I+\varepsilon R)^{-1}\dot R(I+\varepsilon R)^{-1}R 
\right) \\
&=
\int_0^1d\varepsilon\, \tr\left((I+\varepsilon R)^{-1}\dot R-\varepsilon(I+\varepsilon R)^{-1} R(I+\varepsilon R)^{-1}\dot R 
\right) \\
&=\int_0^1d\varepsilon\, \frac{d}{d\varepsilon}\tr\left(\varepsilon (I+\varepsilon R)^{-1}\dot R\right) \\
&=
\tr\left((I+R)^{-1}\dot R\right)
\end{align*}
\end{proof}

\begin{proof}[Proof of Theorem \ref{thm:bfk}]
Let $D_L(\lambda)=D_L+\lambda$, $\dla(\lambda)=\dla+\lambda$, and ${\mathcal P}_{M_\Gamma}(\lambda)$, $\N_\Gamma(\lambda)$ the associated Poisson and Neumann jump operators.
 By the same calculation as  in \cite[Cor.\ 3.8 and Lemma 3.6]{BFK}, we have
 \begin{align}
   \N_\Gamma^{-1} \dot\N_\Gamma &={\bf b}''_\Gamma D_L^{-1} {\mathcal P}_{M_\Gamma}\imath_\Delta\label{eqn:dot-N} \\
 \frac{d}{d\lambda}\left(\log\Det D_L-\log\Det\dla\right)&=
 \tr\left( {\mathcal P}_{M_\Gamma}\imath_\Delta {\bf b}''_\Gamma D_L^{-1} \right)
 \label{eqn:dots}
 \end{align}
 where we have omitted the spectral parameter from the notation.
 For simplicity, set $P={\mathcal P}_{M_\Gamma}\imath_\Delta$ and $B={\bf b}''_\Gamma D_L^{-1}$. 
 According to \cite[Lemma 3.9]{BFK}, $PB$ is trace class.
Let $f(s)=\tr (D_L^{-s} PB)$, which is holomorphic for $\real s>0$.
 We claim that $f$ admits an analytic continuation for $\real s>-1/2$. 
 Indeed, for $\real s>0$,  
 $$
 f(s)=\tr(D_L^{-s}P B)=\tr(BD_L^{-s}P)
 =\tr(\pi_1BD_L^{-s}P\pi_1)+\tr(\pi_2BD_L^{-s}P\pi_2)
 $$
 But the operators $\pi_i B D_L^{-s} P\pi_i$ are manifestly  of order $-2-2s$ on $L^2(S^1)$; hence, the claim.
  Moreover, 
 by \eqref{eqn:trace-projection} and \eqref{eqn:miracle} we have
$
 f(0)=(d/d\lambda)\log \Det_Q\N_\Gamma(\lambda)
 $ for $\lambda$ sufficiently large.
 On the other hand, since $PB$ is trace class it is also true that $f(0)=\tr(PB)$,
 and we conclude from \eqref{eqn:dots} that
 $$
  \frac{d}{d\lambda}\log \Det_Q\N_\Gamma(\lambda)
= \frac{d}{d\lambda}\left(\log\Det D_L-\log\Det\dla\right)
 $$
 for $\lambda$ large.
 Since the determinants are analytic in $\lambda$,
 this proves the existence of the constant $c_Q$ (alternatively, notice that \eqref{eqn:miracle} holds for \emph{all} $\lambda>0$ by choosing an appropriate contour for the integral in \eqref{eqn:det-N}).  The constant $c_Q$ may  now
 be  determined by the asymptotics as $\lambda\to \infty$. By
\cite[Thm.\ 3.12 (2)]{BFK}  the claimed value for $c_Q$ holds if we show that the second term on the right hand side of \eqref{eqn:det-N} vanishes as $\lambda\to \infty$.
To see this is indeed the case, set $S(\lambda)=\square_\Gamma+\lambda$ acting on $\Bgammapp$.  It then follows from Proposition \ref{prop:symbol} that $S(\lambda)R(\lambda)$ is of order zero, and so it is bounded uniformly in $\lambda$ (cf.\ \cite[Cor.\ 9.1]{Shu}).  Now $R(\lambda)=S(\lambda)^{-1}(S(\lambda)R(\lambda))$, so $R(\lambda)$ is trace class with  $\tr|R(\lambda)|\leq C\tr S(\lambda)^{-1}$.  The eigenvalues $\{\lambda_n\}_{n=1}^\infty$ of $S(\lambda)$ have asymptotics $\lambda_n\geq an^2+\lambda-b$, for a positive constant $a$ and $n$ sufficiently large, and so by an explicit estimate $\tr S(\lambda)^{-1}$ is $O(\lambda^{-1/2})$. By the  remark in the proof of Lemma \ref{lem:det_N}, $I+\varepsilon R(\lambda)$ is uniformly invertible for $0\leq\varepsilon\leq 1$ and $\lambda$ sufficiently large.
Hence, 
$$ \left| \tr \left((I+\varepsilon 
R(\lambda))^{-1}R(\lambda)\right)\right|\leq 
C \tr\left| R(\lambda)\right| =O(\lambda^{-1/2})$$
 uniformly for $0\leq\varepsilon\leq 1$, and the result follows.
\end{proof}

\subsection{The case of zero modes} \label{sec:zero_modes}
The goal of this section is to extend the formula in Theorem \ref{thm:bfk} as $\lambda\to 0$.  We will need a preliminary 

\begin{definition} \label{def:generic_framing}
A framing $\tau_L$ near $\Gamma$ is  {\bf generic} if
$\bgammapp$ is injective on 
  $\ker \dl\subset \Omega^0_\R(M,L)$.
\end{definition}

\noindent
Note that an equivalent condition to the one above is that the difference map $\delta_\Gamma\bmgammap$ be injective on $\ker\dl^{\smalla}$ on $M_\Gamma$.  Indeed, if $\Phi$ is a global section in $\ker\dl$, then regarded as a section on $M_\Gamma$, we automatically have $\delta_\Gamma\bmgammap(\Phi)=0$.  If in addition, $\bgammapp(\Phi)=(\Phi'',0)=0$, then $\Phi\in\ker\dla$.  Conversely, if $\Phi^{\smalla}\in\ker \dl^{\smalla}$  and $\delta_\Gamma\bmgammap(\Phi^{\smalla})=0$, then
since $\delta_\Gamma\bmgammapp(\Phi^{\smalla})=0$ automatically,  it extends to a global section on $M$.

\begin{theorem} \label{thm:bfk_generic}
For a given framing $\tau_L$ near $\Gamma$, 
let $\{\Phi_i\}$ $($resp.\ $\{ \phia_i\}$$)$
 be a basis for $\ker\dl$ on $M$ $($resp.\  for $\ker\dla$ on $M_\Gamma$$)$.  Assume the framing is generic
 in the sense of Definition \ref{def:generic_framing}. Then
$$
\left[\frac{{\Det}^\ast\dl}{\det(\Phi_i, \Phi_j)}\right]_M=c_Q\, \left[\frac{{\Det}^\ast\dla}{\det(\phia_i,\phia_j)}\right]_{M_\Gamma}
\frac{\det(\delta_\Gamma\phia_i,\delta_\Gamma\phia_j)_\Gamma }{\det(\Phi_i'',\Phi_j'')_\Gamma}\, {\Det}_Q^\ast \N_\Gamma
$$
where  $\N_\Gamma=\N_\Gamma(0)$ is the operator defined on the orthogonal complement of ${\mathbb A}_\Gamma$ in Proposition \ref{prop:neumann_extension}.
\end{theorem}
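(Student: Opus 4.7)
The plan is to derive the formula by passing to the limit $\lambda\to 0^+$ in the Burghelea-Friedlander-Kappeler identity of Theorem \ref{thm:bfk}, carefully tracking the zero-mode contributions on each factor.

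On the two determinant factors I would use the spectral factorization of $\dl+\lambda$ and $\dla+\lambda$:
$$\Det(\dl+\lambda)_M=\lambda^{N_M}\,{\Det}^*(\dl+\lambda)_M,\qquad \Det(\dla+\lambda)_{M_\Gamma}=\lambda^{N_{M_\Gamma}}\,{\Det}^*(\dla+\lambda)_{M_\Gamma},$$
with $N_M=\dim_\R\ker\dl$ and $N_{M_\Gamma}=\dim_\R\ker\dla$.  The starred determinants converge continuously to ${\Det}^*\dl$ and ${\Det}^*\dla$ as $\lambda\to 0$, by analyticity in $\lambda$ of the corresponding zeta functions.

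On the Neumann-jump factor I would analyze the small eigenvalues of $\N_\Gamma(\lambda)$.  By Proposition \ref{prop:neumann_extension}, $\N_\Gamma(\lambda)$ is bounded below on ${\mathbb A}_\Gamma^\perp$ and converges there to the invertible operator $\N_\Gamma$, while on ${\mathbb A}_\Gamma$ it has exactly $\dim{\mathbb A}_\Gamma$ small eigenvalues $\mu_j(\lambda)$ tending to zero.  Their leading asymptotics can be computed from the resolvent expansion of the Poisson operator used in the proof of Proposition \ref{prop:extension}: on ${\mathbb A}_\Gamma$ the operator $\N_\Gamma(\lambda)$ has a $\lambda^{-1}$ singularity whose residue is a finite-dimensional pairing built from the zero modes of $\dl$ and $\dla$ together with the boundary maps $\bgammapp$ and $\delta_\Gamma\bmgammap$.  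Using orthonormal bases $\{\widetilde\Phi_i\}$ of $\ker\dl$ and $\{\widetilde\Phi_i^{\smalla}\}$ of $\ker\dla$, together with the orthogonal decomposition ${\mathbb A}_\Gamma={\mathbb A}_\Gamma^{\ker}\oplus{\mathbb A}_\Gamma^{\alv}$, a direct linear-algebra calculation gives
$$\prod_j\mu_j(\lambda)=\lambda^{\dim{\mathbb A}_\Gamma}\,\frac{\det(\widetilde\Phi_i'',\widetilde\Phi_j'')_\Gamma}{\det(\delta_\Gamma\widetilde\Phi_i^{\smalla},\delta_\Gamma\widetilde\Phi_j^{\smalla})_\Gamma}\,\bigl(1+O(\lambda)\bigr),$$
with both Gram determinants nonzero by the genericity hypothesis.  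Combined with the index bookkeeping $N_M=N_{M_\Gamma}+\dim{\mathbb A}_\Gamma$ (which follows from Corollary \ref{cor:additive_index} and the structure of ${\mathbb A}_\Gamma$), the powers of $\lambda$ on the two sides of Theorem \ref{thm:bfk} match, and the remaining finite terms give the asserted identity once the orthonormal-basis Gram factors are rewritten in the arbitrary bases $\{\Phi_i\}$ and $\{\phia_i\}$ -- the bulk Gram determinants in the denominators arise naturally from this change of basis.

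The principal obstacle is the explicit identification of the leading $\lambda^{-1}$ coefficient of $\N_\Gamma(\lambda)|_{{\mathbb A}_\Gamma}$ with the stated Gram-determinant ratio, without introducing spurious constants.  I would circumvent this by taking the logarithmic $\lambda$-derivative of the BFK identity, applying the trace formula \eqref{eqn:miracle} of Lemma \ref{lem:det_N}, and integrating from a large $\lambda_0$ down to $0$.  This reduces the matching to a purely finite-dimensional computation on $\ker\dl$, $\ker\dla$, and their boundary data, which splits cleanly along the orthogonal decomposition ${\mathbb A}_\Gamma={\mathbb A}_\Gamma^{\ker}\oplus{\mathbb A}_\Gamma^{\alv}$ -- assigning the $\det(\Phi_i'',\Phi_j'')_\Gamma$ and $\det(\delta_\Gamma\phia_i,\delta_\Gamma\phia_j)_\Gamma$ factors to the denominator and numerator exactly as in the statement.
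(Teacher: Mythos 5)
Your overall strategy --- passing to the limit $\lambda\to 0^+$ in Theorem \ref{thm:bfk} and matching powers of $\lambda$ --- is exactly the paper's, but your analysis of $\N_\Gamma(\lambda)$ near $\lambda=0$ has a genuine gap that breaks the bookkeeping. You assert that on ${\mathbb A}_\Gamma$ the operator $\N_\Gamma(\lambda)$ has $\dim{\mathbb A}_\Gamma$ eigenvalues tending to zero like $\lambda$. In fact ${\mathbb A}_\Gamma={\mathbb A}_\Gamma^{\ker}\oplus{\mathbb A}_\Gamma^{\alv}$ contributes two families with opposite behavior: the $m=\dim_\R\ker\dl$ directions coming from global zero modes on $M$ (the subspace ${\mathbb A}_\Gamma^{\ker}$) produce small eigenvalues $\mu_j(\lambda)\sim\hat\mu_j\lambda$, because the $\tfrac{1}{\lambda}$-singularity sits in $\N_\Gamma^{-1}(\lambda)$; but the $n=\dim_\R\ker\dla$ directions coming from zero modes on $M_\Gamma$ (the subspace ${\mathbb A}_\Gamma^{\alv}$) produce \emph{divergent} eigenvalues $\nu_j(\lambda)\sim\hat\nu_j/\lambda$, because the Poisson operator $\poisson_{M_\Gamma}(\lambda)$ itself has a $\tfrac{1}{\lambda}$-pole from the projection onto $\ker\dla$ (see the proof of Proposition \ref{prop:extension}). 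Consequently the correct leading behavior is $\prod_j\mu_j(\lambda)\cdot\prod_j\nu_j(\lambda)\sim\lambda^{m-n}\,\det(\delta_\Gamma\phia_i,\delta_\Gamma\phia_j)_\Gamma/\det(\Phi_i'',\Phi_j'')_\Gamma$ (for orthonormal bases), which is what cancels the factor $\lambda^{n-m}$ coming from the two determinant factors. Your version gives $\lambda^{\dim{\mathbb A}_\Gamma}=\lambda^{m+n}$, and your ``index bookkeeping'' $N_M=N_{M_\Gamma}+\dim{\mathbb A}_\Gamma$ is false: under genericity $\bgammapp$ is injective on $\ker\dl$ and $\delta_\Gamma\bmgammap$ is injective on $\ker\dla$, so $\dim{\mathbb A}_\Gamma=N_M+N_{M_\Gamma}$, and your powers of $\lambda$ balance only when $N_{M_\Gamma}=0$. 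Your Gram-determinant ratio for $\prod_j\mu_j(\lambda)$ is also inverted: the small eigenvalues are asymptotically $\lambda/\sigma_k$ with $\sigma_k$ the eigenvalues of the finite-rank operator $A_1=\sum_{j}(\cdot,\bgammapp\Phi_j)\,\bgammapp\Phi_j$, so $\prod_j\mu_j\sim\lambda^m/\det(\Phi_i'',\Phi_j'')_\Gamma$, with the boundary Gram determinant in the denominator.

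The proposed ``circumvention'' by differentiating in $\lambda$ and invoking \eqref{eqn:miracle} does not repair this: that identity controls the determinant for $\lambda$ bounded away from $0$, whereas the whole difficulty is the $\lambda\to 0$ behavior, which is governed precisely by the two families of eigenvalues above. The paper's proof carries out two separate finite-dimensional computations --- extracting $A_1$ from the singular part of $\N_\Gamma^{-1}(\lambda)$ and $A_2(\lambda)$ from the singular part of $\N_\Gamma(\lambda)$, then showing that $\mu_j(\lambda)/\lambda$ and $\lambda\nu_j(\lambda)$ converge to eigenvalues of $A_1^{-1}$ and $A_2(0)$ respectively, with $\det A_1=\det(\Phi_i'',\Phi_j'')_\Gamma$ and $\det A_2(0)=\det(\delta_\Gamma\phia_i,\delta_\Gamma\phia_j)_\Gamma$ --- and it is this second computation, for the divergent family, that your argument is missing.
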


\begin{proof}
We apply Theorem \ref{thm:bfk} as $\lambda\downarrow
 0$. 
 By the definition of zeta regularization,
\begin{align*}
\log\Det(\dl+\lambda)&=(\log\lambda)\dim_\R\ker\dl + \log\Det^\ast\dl
+ o(1) \\
\log\Det(\dla+\lambda)&=(\log\lambda)\dim_\R\ker\dla + \log\Det^\ast\dla
+ o(1)
\end{align*}
on $M$ and $M_\Gamma$ with Alvarez boundary conditions.  Let $m= \dim_\R\ker\dl$ on $M$, and $n=\dim_\R\ker\dla$ on $M_\Gamma$.
Hence, it suffices to compute 
 $\lim_{\lambda\to 0}\left\{\log\Det_Q \N_\Gamma(\lambda)+(n-m)\log\lambda\right\}$.
The key point is that there are small eigenvalues of 
$\N_\Gamma(\lambda)$, $\mu_j(\lambda)\to 0$, $j=1,\ldots, m$, 
corresponding to global holomorphic sections of $L$, and large 
eigenvalues $\nu_j(\lambda)\to +\infty$, $j=1,\ldots, n$, 
corresponding to $\ker \dla$. 
Moreover, it follows easily from the definition that
\begin{equation} \label{eqn:small_lambda}
\log\Det  \N_\Gamma(\lambda)=
\log(\mu_1(\lambda)\cdots\mu_m(\lambda))+
\log(\nu_1(\lambda)\cdots\nu_n(\lambda))+
\log{\Det}_Q^\ast \N_\Gamma+o(1)
\end{equation}
 We need therefore to compute the
 contribution from both the $\{\mu_i\}$ and $\{\nu_i\}$.

Let $\mu_1(\lambda),\ldots, \mu_m(\lambda)$ be the small eigenvalues of
 $\N_\Gamma(\lambda)$, and let $\{\beta_j(\lambda)\}_{j=1}^m$ be
orthonormal with eigenvalues $\mu_j(\lambda)$.  
Let $\{\Phi_j\}_{i=1}^\infty$
 be a complete set of eigensections for $\dl$ on $M$
with eigenvalues $\{\lambda_j\}_{j=1}^\infty$,
 $\lambda_j=0$ if and only if $j\leq
m$.
Let  $\pi: \Bgammapp\to \Bgammapp$ orthogonal projection to ${\mathbb A}_\Gamma^{\ker}$.
Then  we compute
$$
{\mathcal N}^{-1}_\Gamma(\lambda)=
\left(
\begin{matrix}
\tfrac{1}{\lambda}A_1+\pi B_1(\lambda)\pi & \pi B_1(\lambda)\pi^\perp  \\
\pi^\perp B_1(\lambda)\pi  & \pi^\perp B_1(\lambda)\pi^\perp 
\end{matrix}
\right)
$$
where $A_1,B_1(\lambda) :  L^2(\Gamma)\to L^2(\Gamma)$ are given by 
\begin{align}
A_1(F,G) &= \sum_{j=1}^m \left((F,G),\bgammapp(\Phi_j)\right)_\Gamma
\bgammapp(\Phi_j) \label{eqn:A}\\
B_1(\lambda)(F,G) &= \sum_{j=m+1}^\infty \frac{1}{\lambda_j+\lambda} 
 \left((F,G),\bgammapp(\Phi_j)\right)
 \bgammapp(\Phi_j)
 \notag
\end{align}
To see this, let $\Phi$ be a section of $L\to M_\Gamma$, $(\dl+\lambda)\Phi=0$, with $(F,G)=\delta_\Gamma J\bmgammap(\Phi)$, and $\delta_\Gamma \bmgammapp(\Phi)=0$.  Then by \eqref{eqn:parts},
\begin{align*}
\Phi = \sum_{j=1}^\infty (\Phi, \Phi_j)_{M_\Gamma}\Phi_j
&= \sum_{j=1}^\infty\frac{1}{\lambda_j+\lambda} (\Phi, (\dl+\lambda)\Phi_j)_{M_\Gamma}\Phi_j \\
&=- \sum_{j=1}^\infty\frac{1}{\lambda_j+\lambda} (\bmgamma\Phi, J\bmgamma\Phi_j)\Phi_j \\
&=- \sum_{j=1}^\infty\frac{1}{\lambda_j+\lambda} (\delta_\Gamma\bgamma\Phi, J\bgamma\Phi_j)\Phi_j \\
&=- \sum_{j=1}^\infty\frac{1}{\lambda_j+\lambda} (\delta_\Gamma\bgammap\Phi, J\bgammapp\Phi_j)\Phi_j \\
&=\sum_{j=1}^\infty\frac{1}{\lambda_j+\lambda} (\delta_\Gamma J\bgammap\Phi, \bgammapp\Phi_j)\Phi_j \\
&=\sum_{j=1}^\infty\frac{1}{\lambda_j+\lambda} ((F,G), \bgammapp\Phi_j)\Phi_j
\end{align*}
and computing $\bgammapp(\Phi)$ gives the result.
We wish to relate the eigenvalues of $A_1$ to the $\mu_j(\lambda)$.  Since
$$
{\mathcal N}^{-1}_\Gamma(\lambda)\beta_j(\lambda)=\mu_j^{-1}(\lambda)
\beta_j(\lambda)
$$
we have
\begin{align}
\frac{1}{\lambda}A_1\beta_j(\lambda)+\pi B_1(\lambda)\beta_j(\lambda) &=
\mu_j^{-1}(\lambda) \pi\beta_j(\lambda) \label{E:a_ker}\\
\pi^\perp B_1(\lambda)\beta_j(\lambda) &= \mu_j^{-1}(\lambda)
\pi^\perp\beta_j(\lambda) \notag
\end{align}
Since $B_1(\lambda)$ is uniformly bounded as $\lambda\downarrow 0$, it follows that
$
\Vert \pi^\perp\beta_j(\lambda)\Vert_{L^2(\Gamma)}\leq C
\mu_j(\lambda)
$,
for $C$ independent of $\lambda$.  In particular, $\Vert
\pi\beta_j(\lambda)\Vert_{L^2(\Gamma)}\to 1$ as $\lambda\downarrow 0$, and so
 (after passing to a sequence $\lambda_k\downarrow 0$)
 there exist limits $\{\beta_j(0)\}$ which give a basis for ${\mathbb A}_\Gamma^{\ker}$.  If we 
let $v_j$ be an orthonormal basis for ${\mathbb A}_\Gamma^{\ker}$ such that $A_1
v_j=\sigma_j v_j$, and write
$$
\pi\beta_j(\lambda)= \sum_{k=1}^m C_{jk}(\lambda)v_k 
$$
then the (subsequential) limit $C_{jk}(0)$ exists and is nonsingular.
From \eqref{E:a_ker} we have
\begin{equation} \label{E:aa_ker}
\left\Vert
A_1\pi\beta_j(\lambda)-\frac{\lambda}{\mu_j(\lambda)}
\pi\beta_j(\lambda)\right\Vert_{L^2(\Gamma)}\leq
C\lambda\
\end{equation} 
In terms of the basis $\{v_j\}$, 
$$
A_1\pi\beta_j(\lambda)-\frac{\lambda}{\mu_j(\lambda)}\pi\beta_j(\lambda)
=\sum_{k=1}^m C_{jk}(\lambda)\left(
\sigma_k-\frac{\lambda}{\mu_j(\lambda)}\right)v_k
$$
so by \eqref{E:aa_ker},
$
C_{jk}(\lambda)(\sigma_k-(\lambda/\mu_j(\lambda)))\to 0
$,
for all $j,k$.
  Since $(C_{jk})$ is non-singular, for each $j$, $C_{jk}(0)\neq 0$ for some $k$.  Hence,
$
\sigma_k^{-1} =\lim_{\lambda\downarrow
0}\mu_j(\lambda)/\lambda=\hat\mu_j
$
exists for each $j$, with $C_{jk}\sigma_k=\hat\mu_j^{-1}C_{jk}$.  Again using the fact that $(C_{jk})$ is non-singular,
we have
$$\log\det A_1+\log(\prod \mu_j(\lambda))=m\log\lambda
+o(1)
$$
Finally, note that by choosing 
$\bgammapp(\Phi_j)$, $j=1, \ldots, m$,  as a basis in \eqref{eqn:A},
 we have 
 $$\det A_1=\det(
\bgammapp(\Phi_i),\bgammapp(\Phi_j))$$ 
\begin{equation} \label{eqn:A1}
\log(\prod \mu_j(\lambda))=m\log\lambda-
 \log\det(
\bgammapp(\Phi_i),\bgammapp(\Phi_j))+o(1)
\end{equation}

Let $\nu_1(\lambda),\ldots, \nu_n(\lambda)$ be the divergent eigenvalues of
 $\N_\Gamma(\lambda)$, and let $\{\beta_j^{\smalla}(\lambda)\}_{j=1}^n$ be
orthonormal with eigenvalues $\nu_j(\lambda)$.  
Let $\{\Phi_i^{\smalla}\}_{i=1}^\infty$
 be a complete set of eigensections for $\dla$ on $M_\Gamma$ with
eigenvalues $\{\lambda_i\}_{i=1}^\infty$, and
 $\lambda_i=0$ if and only if $i\leq
n$.
Let  $\pi: \Bgammapp\to \Bgammapp$ be orthogonal projection to ${\mathbb A}^{\alv}_\Gamma$.
We also choose a smooth extension map $E: \Bgammapp\to
L^2(M_\Gamma)$ satisfying $\bgammapp  E=I$, $\bgammap E=0$. 
Then as above we compute
$$
\N_\Gamma(\lambda)=
\left(
\begin{matrix}
\tfrac{1}{\lambda}A_2(\lambda)+\pi B_2(\lambda)\pi & \pi B_2(\lambda)\pi^\perp  \\
\pi^\perp B_2(\lambda)\pi  & \pi^\perp B_2(\lambda)\pi^\perp 
\end{matrix}
\right)
$$
where $A_2(\lambda),B_2(\lambda) : 
 L^2(\Gamma)\to L^2(\Gamma)$ are given by 
\begin{align}
A_2(\lambda)(f,g) &= -\sum_{j=1}^n\left\{ 
\left((f,g),\delta_\Gamma J\bmgammap(\Phi_j^{\smalla})\right)
+\lambda\left(E(f,g),\Phi_j^{\smalla}\right)_M\right\} \delta_\Gamma J\bmgammap(\Phi_i^{\smalla})
 \label{eqn:A2}\\
B_2(\lambda)(f,g) &= -\sum_{j=n+1}^\infty \frac{1}{\lambda_j+\lambda} 
 \left((\dl+\lambda)E(f,g),\Phi_j^{\smalla}\right)_M
\delta_\Gamma J\bmgammap(\Phi_j^{\smalla})
 \notag
\end{align}
To see this, note that to compute $\N_\Gamma(\lambda)(f,g)$ we need to solve the boundary value problem
$$
(\dl+\lambda)\Phi=0 \ ,\
\bmgammapp(\Phi)=\imath_\Delta(f,g)
$$
on $M_\Gamma$.
From the definition of the extension, it suffices to solve
$$
(\dl+\lambda)\widetilde\Phi=
- (\dl+\lambda) E(f,g)\ ,\
\bmgammapp(\widetilde\Phi)=0
$$
for then $\Phi=
E(f,g)+\widetilde\Phi$, and by the assumption on $E$ the jump in
$\bmgammap(\widetilde\Phi)$ gives $\N_\Gamma(\lambda)(f,g)$.  Now
\begin{align*}
\widetilde \Phi&=-\sum_{j=1}^\infty\frac{1}{\lambda_j+\lambda}
((\dl+\lambda)E(f,g),\Phi_j^{\smalla})_{M_\Gamma}
\Phi_j^{\smalla} \\
&=
-\sum_{j=1}^n
\left\{
\frac{1}{\lambda}
(\dl E(f,g),\Phi_j^{\smalla})_{M_\Gamma}
+
(E,\Phi_j)_{M_\Gamma}
\right\}
\Phi_j^{\smalla} \\
&\qquad\qquad 
 -\sum_{j=n+1}^\infty \frac{1}{\lambda_j+\lambda} 
 \left((\dl+\lambda)E(f,g),\Phi_j^{\smalla}\right)_{M_\Gamma}
\Phi_j^{\smalla} 
\end{align*}
By \eqref{eqn:parts}, the first term on the right hand side is
(since $\bmgammapp(\Phi_j^{\smalla})=0$)
\begin{align*}
&=
-\sum_{j=1}^n \left\{ \frac{1}{\lambda}
(\bmgamma(E(f,g)),J\bmgamma(\Phi_j^{\smalla}))
+
(E(f,g),\Phi_j)_{M_\Gamma}
\right\} \Phi_j^{\smalla} \\
&=
-\sum_{j=1}^n \left\{ \frac{1}{\lambda}
(\bmgammapp(E(f,g)),J\bmgammap(\Phi_j^{\smalla}))
+
(E(f,g),\Phi_j^{\smalla})_{M_\Gamma}
\right\} \Phi_j^{\smalla} \\
&=
-\sum_{j=1}^n \left\{ \frac{1}{\lambda}
(\bgammapp(E(f,g)),\delta_\Gamma J\bmgammap(\Phi_j^{\smalla}))
+
(E(f,g),\Phi_j^{\smalla})_{M_\Gamma}
\right\} \Phi_j^{\smalla} \\
&=-\sum_{j=1}^n \left\{ \frac{1}{\lambda}
((f,g),\delta_\Gamma J\bmgammap(\Phi_j^{\smalla}))
+
(E(f,g),\Phi_j^{\smalla})_{M_\Gamma}
\right\} \Phi_j^{\smalla} 
\end{align*}
We again relate the eigenvalues of $A_2(0)$
 to the $\nu_j(\lambda)$.  Since
$
\N_\Gamma(\lambda)\beta_j^{\smalla}(\lambda)=\nu_j(\lambda)
\beta_j^{\smalla}(\lambda)
$,
we have
\begin{align}
\frac{1}{\lambda}A_2(\lambda)\beta_j^{\smalla}(\lambda)+\pi 
B_2(\lambda)\beta_j^{\smalla}(\lambda) &=
\nu_j(\lambda) \pi\beta_j^{\smalla}(\lambda) \label{E:a}\\
\pi^\perp B_2(\lambda)\beta_j^{\smalla}(\lambda) &= \nu_j(\lambda)
\pi^\perp\beta_j^{\smalla}(\lambda) \notag
\end{align}
Since $B_2(\lambda)$ 
is uniformly bounded as $\lambda\downarrow 0$, it follows that
$
\Vert \pi^\perp\beta_j^{\smalla}(\lambda)\Vert_{L^2(\Gamma)}\leq C
\nu_j^{-1}(\lambda)
$,
for $C$ independent of $\lambda$.  In particular, $\Vert
\pi\beta_j^{\smalla}(\lambda)
\Vert_{L^2(\Gamma)}\to 1$ as $\lambda\downarrow 0$,
 and so the (sequential) limits $\{\beta_j^{\smalla}(0)\}$
 give a basis for ${\mathbb A}^{\alv}_\Gamma$.  If we 
let $v_j$ be an orthonormal basis
 for ${\mathbb A}^{\alv}_\Gamma$ such that $A_2(0)v_j=\sigma_j v_j$, and write
$$
\pi\beta_j^{\smalla}(\lambda)= \sum_{k=1}^n C_{jk}(\lambda)v_k \ ,
$$
then $C_{jk}(0)$ exists and is nonsingular.
From \eqref{E:a} we have
\begin{equation} \label{E:aa}
\left\Vert
A_2(0)\pi\beta_j^{\smalla}(\lambda)-\lambda\nu_j(\lambda)\pi\beta_j^{\smalla}
(\lambda)
\right\Vert_{L^2(\Gamma)}\leq
C\lambda\
\end{equation} 
In terms of the basis $\{v_j\}$, 
$$
A_2(0)\pi\beta_j^{\smalla}(\lambda)-\lambda\nu_j(\lambda)\pi
\beta_j^{\smalla}(\lambda)
=\sum_{k=1}^n C_{jk}(\lambda)\left(
\sigma_k-\lambda\nu_j(\lambda)\right)v_k
$$
so by \eqref{E:aa},
$
C_{jk}(\lambda)( \sigma_k-\lambda\nu_j(\lambda))\to 0
$
for all $j,k$.  As before,
$
\lim_{\lambda\downarrow
0}\lambda\nu_j(\lambda)=\hat\nu_j
$
exists for each $j$, and $C_{jk}\sigma_k=\hat\nu_j C_{jk}$ for all $j,k$.
Hence, $\log\det A_2(0)=\log(\prod \nu_j(\lambda))+m\log\lambda
+o(1)$.
Finally, note that by choosing 
$\delta_\Gamma J\bmgammap(\Phi_j^{\smalla})$ as a basis in \eqref{eqn:A2},
 we have 
 $$\det A_2(0)=\det(\delta_\Gamma\bmgammap(\Phi_i^{\smalla}),\delta_\Gamma\bmgammap(\Phi_j^{\smalla}))$$
\begin{equation} \label{eqn:A22}
\log(\prod \nu_j(\lambda))=-m\log\lambda+
 \log\det(\delta_\Gamma\bmgammap(\Phi_i^{\smalla}),\delta_\Gamma\bmgammap(\Phi_j^{\smalla}))
+o(1)
\end{equation}
Putting together \eqref{eqn:small_lambda}, \eqref{eqn:A1} and \eqref{eqn:A22} gives the result.
\end{proof}

We will later use the following special case of 
Theorem \ref{thm:bfk_generic}:  let $\Gamma$ be a 
simple closed connected curve separating $M$
 into components $\Rone$ and $\Rtwo$. 
 Then for any choice of bases $\{\Phi_i\}_{i=1}^{m}$  for $\ker\dl$ on $M$, and
 $\{\phiaone_i\}_{i=1}^{m_1}$, and $\{\phiatwo_i\}_{i=1}^{m_2}$ for $\ker\dla$ on $\Rone$ and $\Rtwo$, we have

\begin{equation} \label{eqn:sep_curve}
\left[\frac{{\Det}^\ast\dl}{\det(\Phi_i, \Phi_j)}\right]_M=c_Q\, \left[\frac{{\Det}^\ast\dla}{\det(\phiaone_i,\phiaone_j)}\right]_{\Rone} 
\left[ \frac{{\Det}^\ast\dla}{\det(\phiatwo_i,\phiatwo_j)}\right]_{\Rtwo} \\
\frac{\det(\phia_i,\phia_j)_\Gamma}{\det(\Phi_i'',\Phi_j'')_\Gamma}\, {\Det}_Q^\ast \N_\Gamma
\end{equation}
where
$$
\Phi_i^{\smalla}=\begin{cases} \Phi_i^{\smalla,\smallone} & 1\leq i\leq m_1 \\ \Phi_{i-m_1}^{\smalla,\smalltwo} & m_1<  i\leq m_1+m_2 \end{cases}
$$
extended by zero to the whole surface $M_\Gamma$.

Actually, for the purpose of degeneration it will be useful to also have a 
slightly modified version of \eqref{eqn:sep_curve} in the case where the trivialization 
$\tau_L$ is in fact the restriction of a  global holomorphic section. 
 This is not a generic situation in the sense of Definition \ref{def:generic_framing}, 
since the global section $\tau_L$ also satisfies
 Alvarez boundary conditions, and hence 
$\det(\Phi_i'',\Phi_j'')_\Gamma=0$ for any basis.  Similarly, since the jump of $\tau_L$ is trivial,  
$\det(\delta_\Gamma\phia_i,\delta_\Gamma\phia_j)$ also vanishes.  This motivates the following

\begin{definition} \label{def:good_framing}
Let $\tau_L$ be a global holomorphic section of $L\to M$, nowhere vanishing near $\Gamma$.  We call the framing $\tau_L$  {\bf good} if
the kernel of $\bgammapp$  on 
  $\ker \dl\subset \Omega^0_\R(M,L)$ is precisely the $\R$-span of $\tau_L$.
We say that  bases  $\{\Phi_i\}_{i=1}^{m}$, $\{\phiaone_i\}_{i=1}^{m_1}$, and $\{\phiatwo_i\}_{i=1}^{m_2}$ for $\ker\dl$ on $M$ and
for $\ker\dla$ on $\Rone$ and $\Rtwo$,
 are {\bf adapted} to $\tau_L$ if
$\Phi_1=\tau_L$,
$\phiaone_1=\tau_L\bigr|_{\Rone}$, 
$\phiatwo_1=\tau_L\bigr|_{\Rtwo}
$.
\end{definition}

\noindent
For adapted bases the notation ${\det}^\ast(\Phi_i,\Phi_j)_\Gamma$ will by definition denote the determinant  of the  $(11)$-minor of $
(\Phi_i,\Phi_j)_\Gamma$.  Similarly for $\phia_i$.
 Then after some linear algebra we have

 \begin{theorem} \label{thm:bfk_adapted}
Let $\tau_L$ be a global holomorphic section giving a  framing of $L$ near a simple closed separating curve $\Gamma$, and
let $\{\Phi_i\}$ $($resp.\ $\{\phiaone_i, \phiatwo_j\}$$)$ be an adapted basis for $\ker\dl$ on $M$ $($resp.\ on $\Ronetwo$ with Alvarez 
boundary conditions$)$.  Assume the framing is  good in the sense of Definition \ref{def:good_framing}. Then
$$
\left[\frac{{\Det}^\ast\dl}{\det(\Phi_i, \Phi_j)}\right]_M=c_Q\, 
\left[\frac{{\Det}^\ast\dla}{\det(\phiaone_i,\phiaone_j)}\right]_{\Rone}
\left[ \frac{{\Det}^\ast\dla}{\det(\phiatwo_i,\phiatwo_j)}\right]_{\Rtwo}  
\frac{{\det}^\ast(\phia_i,\phia_j)_\Gamma }{{\det}^\ast(\Phi_i'',\Phi_j'')_\Gamma}\, {\Det}_Q^\ast \N_\Gamma
$$
\end{theorem}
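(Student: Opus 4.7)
The plan is to rerun the proof of Theorem \ref{thm:bfk_generic}, accounting for the single degenerate direction contributed by the global holomorphic section $\tau_L$. The structural observation is that since $\tau_L$ is used as its own framing, it has identically vanishing imaginary part in that trivialization, so $\bgammapp(\tau_L) = 0$. By Definition \ref{def:good_framing} this gives the entire kernel of $\bgammapp|_{\ker\dl}$, and consequently $\dim_\R {\mathbb A}_\Gamma^{\ker} = m-1$. Dually, on $M_\Gamma$ the combination arising from $(\tau_L|_{\Rone},\tau_L|_{\Rtwo})$ has $\delta_\Gamma\bmgammap = 0$ since the two restrictions agree on $\Gamma$, and the good framing hypothesis isolates this as the only such direction in $\ker \dla$, so $\dim_\R {\mathbb A}_\Gamma^{\alv} = n-1$.

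Starting from Theorem \ref{thm:bfk} for $\lambda > 0$, I would let $\lambda \downarrow 0$ and carry out the spectral analysis of $\N_\Gamma(\lambda)$ as in the proof of Theorem \ref{thm:bfk_generic}. The full determinants expand as $\lambda^m \Det^\ast \dl$ on $M$ and $\lambda^n \Det^\ast \dla$ on $M_\Gamma$ to leading order, while $\N_\Gamma(\lambda)$ now has only $m-1$ small eigenvalues $\mu_j(\lambda) \to 0$ and $n-1$ divergent eigenvalues $\nu_j(\lambda) \to \infty$; the $\tau_L$-direction produces an exact zero in the matrices $A_1$ of \eqref{eqn:A} and $A_2(0)$ of \eqref{eqn:A2} rather than a moving eigenvalue. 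The perturbation arguments that gave \eqref{eqn:A1} and \eqref{eqn:A22} are then applied on the reduced subspaces ${\mathbb A}_\Gamma^{\ker}$ and ${\mathbb A}_\Gamma^{\alv}$, yielding
\begin{align*}
\log\prod_{j=1}^{m-1} \mu_j(\lambda) &= (m-1)\log\lambda - \log {\det}^\ast(\Phi_i'',\Phi_j'')_\Gamma + o(1), \\
\log\prod_{j=1}^{n-1} \nu_j(\lambda) &= -(n-1)\log\lambda + \log {\det}^\ast(\phia_i,\phia_j)_\Gamma + o(1),
\end{align*}
where the starred determinants are the $(1,1)$-minors obtained by deleting the degenerate $\tau_L$-row and column. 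The identification ${\det}^\ast(\phia_i,\phia_j)_\Gamma = {\det}^\ast(\delta_\Gamma\phia_i,\delta_\Gamma\phia_j)_\Gamma$ on the minor uses that for $i \geq 2$ the basis element $\phia_i$ is supported on exactly one component $\Ri$, so $\delta_\Gamma\bmgammap(\phia_i) = \pm\phia_i|_\Gamma$; the sign pattern factors as conjugation by a diagonal $\pm 1$ matrix $D$ and so drops out of the determinant since $(\det D)^2 = 1$.

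Combining these asymptotics, the powers $\lambda^{(m-1)-(n-1)} = \lambda^{m-n}$ on the Neumann jump side balance the ratio $\lambda^m / \lambda^n$ from the global determinants, and Theorem \ref{thm:bfk} collapses in the limit to the stated adapted identity. The factorization $\det(\phia_i,\phia_j)_{M_\Gamma} = \det(\phiaone_i,\phiaone_j)_{\Rone}\cdot\det(\phiatwo_i,\phiatwo_j)_{\Rtwo}$ follows from the disjoint decomposition $M_\Gamma = \Rone \sqcup \Rtwo$ and the single-component support of each combined basis element. The main technical obstacle is to confirm that the non-singularity argument for the limiting coefficient matrices $(C_{jk}(0))$ in the proof of Theorem \ref{thm:bfk_generic} continues to hold after restriction to the reduced subspaces; this is a direct consequence of Definition \ref{def:good_framing}, which guarantees precisely one degenerate direction in each of ${\mathbb A}_\Gamma^{\ker}$ and ${\mathbb A}_\Gamma^{\alv}$, so that the remaining $(m-1)$- and $(n-1)$-dimensional blocks of $A_1$ and $A_2(0)$ are invertible by the same argument as in the generic case.
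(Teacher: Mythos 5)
Your proposal is correct and follows the same route the paper intends: the paper's own ``proof'' consists only of the remark that the statement follows ``after some linear algebra'' from the argument for Theorem \ref{thm:bfk_generic}, and you supply exactly that linear algebra --- the two one-dimensional degeneracies in ${\mathbb A}_\Gamma^{\ker}$ and ${\mathbb A}_\Gamma^{\alv}$ spanned by $\tau_L$ and $(\tau_L|_{\Rone},\tau_L|_{\Rtwo})$, the resulting counts $m-1$ and $n-1$ of small and divergent eigenvalues of $\N_\Gamma(\lambda)$ (whose product still carries the power $\lambda^{m-n}$ needed to balance Theorem \ref{thm:bfk}), the replacement of the full Gram determinants by the $(1,1)$-minors, and the passage from jump to restriction Gram matrices via conjugation by a diagonal sign matrix. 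Nothing further is needed.
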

 \begin{example}
 As a special case of Theorem \ref{thm:bfk_generic},  consider the $2$-sphere $S^2_R$ of radius $R$ cut along an equator $\Gamma$ into two copies of the hemisphere $H^2_R$.  Choose the canonical bundle $K$ with the canonical framing.  Then $\ker \dl$ and $\ker\dla$ are both trivial, so the condition in Definition \ref{def:generic_framing} is trivially satisfied.  Moreover, it is easy to see that $c_Q\Det\N_\Gamma=1$.  Using this and Remark \ref{rem:polyakov_alvarez} (4), 
  $$
 \left[\Det^\ast D_{\Ocal} \right]_{S^2_R} =\left[\Det D_K\right]_{S^2_R}= c_Q  \left[\Det D_K^{\smallA}\right]_{H^2_R}^2 \Det\N_\Gamma =\left[\Det^\ast D_{\Ocal}^{\smallA}\right]_{H^2_R}^2 
 $$
 and so by Lemma \ref{lem:real_complex}  and Remark \ref{rem:polyakov_alvarez} (3), 
we obtain the well-known formula 
$$ \left[\Det^\ast\Delta\right]_{S^2_R}=\left[\Det^\ast_{neu.}\Delta\right]_{H^2_R}\left[\Det_{dir.} \Delta\right]_{H^2_R}
$$
 \end{example}
 
\section{Asymptotics of Determinants}  \label{S:degeneration}

\subsection{Asymptotics of the generalized Neumann jump operator}
\label{sec:asy_neumann}
The goal of this section is to prove the following.
Let $M$ be  a closed Riemann surface of genus $g$, and choose a  coordinate
 neighborhood $B$  with coordinate $z$ centered at $p\in M$. 
Let $B_\varepsilon=\{|z|<\varepsilon\}$, and set 
$R_\varepsilon=M\setminus B_\varepsilon$. 
 Let $L\to M$ 
be a hermitian holomorphic line bundle of degree $d$ with a global holomorphic section $\tau_L$ that is nowhere vanishing on $B$.  Also, 
assume $\coker P_L=\{0\}$ on $M$ and  on $R_\varepsilon$, and that $\rho\equiv 1$ and  $\Vert\tau_L\Vert=1$ on $B$.

\begin{proposition} \label{prop:dn_pinch_one}
 If $\N_{\Gamma_\varepsilon}$ denotes the Neumann jump operator with respect to Alvarez boundary conditions defined by a \emph{global} section $\tau_L
$.  Then
 as $\varepsilon\to 0$,
$$
\log{\Det}^\ast_Q \N_{\Gamma_\varepsilon}\longrightarrow (\zeta_Q(0)-4h^0(L)+2)\log 2
$$
\end{proposition}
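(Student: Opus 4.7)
The plan is to extract $\Det_Q^\ast \N_{\Gamma_\varepsilon}$ from the gluing formula of Theorem \ref{thm:bfk_adapted} applied to the decomposition $M = R_\varepsilon \cup B_\varepsilon$ along $\Gamma_\varepsilon$, and then to analyze each of the resulting pieces as $\varepsilon \to 0$. The hypothesis that $\tau_L$ is global and nonvanishing on $B$ makes the framing good in the sense of Definition \ref{def:good_framing}, since any $\omega\in H^0(L)$ with $\imag(\omega/\tau_L)\bigr|_{\Gamma_\varepsilon}=0$ must satisfy $\omega/\tau_L\in\R$ by the maximum principle for harmonic functions on $B_\varepsilon$. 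Under the assumption $\coker P_L=\{0\}$ on both $M$ and $R_\varepsilon$, Theorem \ref{thm:index} gives $\dim_\R \ker \dla=2h^0(L)-1$ on $R_\varepsilon$ and $\dim_\R \ker \dla=1$ on $B_\varepsilon$, so the dimensions of the small and large eigenspaces of $\N_{\Gamma_\varepsilon}$ both equal $2h^0(L)-1$ after removing $\tau_L$ from the adapted bases. Solving the factorization formula for $\Det_Q^\ast\N_{\Gamma_\varepsilon}$ yields
\[
\log\Det_Q^\ast\N_{\Gamma_\varepsilon}=\zeta_Q(0)\log 2+\mbox{(explicit $\varepsilon$-dependent pieces)}.
\]

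Next I would compute each piece explicitly. The disk $B_\varepsilon$ carries the flat metric, trivial bundle, and trivial framing, so $\phiatwo_1=1$ is the only basis element and $(\phiatwo_1,\phiatwo_1)_{B_\varepsilon}=2\pi\varepsilon^2$. The value of $\Det^\ast\dla$ on $B_\varepsilon$ is obtained from its value on the unit disk by the Polyakov--Alvarez formula of Theorem \ref{thm:liouville} with conformal factor $\sigma=\log\varepsilon$; the required determinant on the unit disk can be read off from Remark \ref{rem:polyakov_alvarez}(3) and from the known scalar Dirichlet and Neumann determinants, yielding a closed-form function of $\varepsilon$. The Gram determinants ${\det}^\ast(\Phi_i'',\Phi_j'')_{\Gamma_\varepsilon}$ and ${\det}^\ast(\phia_i,\phia_j)_{\Gamma_\varepsilon}$ are evaluated asymptotically by Taylor-expanding each basis element in the local coordinate $z$ near $p$ and integrating over the circle of radius $\varepsilon$; orthonormalizing these expansions, each extra factor $z^k$ contributes an explicit power of $\varepsilon$, and the leading-order behavior is captured by the Vandermonde-type minor at $p$. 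A similar explicit analysis handles $\det(\phiatwo_i,\phiatwo_j)_{B_\varepsilon}$ and the Gram pairing on $B_\varepsilon$ for the single element $1$.

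The main obstacle is the behavior of $[\Det^\ast\dla/\det(\phiaone_i,\phiaone_j)]_{R_\varepsilon}$ as $\varepsilon\to 0$. For the determinant, I would introduce a fixed reference radius $\varepsilon_0$ and apply Theorem \ref{thm:liouville} with a conformal factor supported in $B$ that interpolates between the given metric and an auxiliary metric making $R_\varepsilon$ isometric to $R_{\varepsilon_0}\setminus (\text{annulus})$; the resulting Liouville action is an explicit integral that, after cancellation with the analogous term from $B_\varepsilon$, contributes only $O(\log\varepsilon)$ in a way that exactly offsets the $\varepsilon$-dependence coming from $B_\varepsilon$ and the Gram matrices. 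For the kernel Gram matrix, I would observe that the $2h^0(L)-1$ elements of $\ker\dla$ on $R_\varepsilon$ can be chosen to limit (after suitable normalization) to $\tau_L$ together with $2h^0(L)-2$ corrections associated to the other global sections modulo the reality condition on $\Gamma_\varepsilon$; the resulting Gram matrix has a block structure whose determinant is an explicit constant times $\det\langle\omega_i,\omega_j\rangle$ up to subleading $\varepsilon$-corrections. Assembling all of these ingredients, every power of $\varepsilon$ and every transcendental term cancels, and what survives are the purely combinatorial factors of $2$: the factor $2^{2h^0(L)}$ appearing twice in numerator and denominator (from the real/complex Gram ratio of Lemma \ref{lem:real_complex}) combines with the factor $2$ from $B_\varepsilon$ and the factor $c_Q^{-1}=2^{\zeta_Q(0)}$, producing the stated constant $(\zeta_Q(0)-4h^0(L)+2)\log 2$.
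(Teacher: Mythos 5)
Your approach has a genuine gap, and it is essentially the reverse of what the paper does. You propose to solve the gluing formula of Theorem \ref{thm:bfk_adapted} for $\Det_Q^\ast\N_{\Gamma_\varepsilon}$ and then show that all other terms conspire to leave only powers of $2$. But the left-hand side $[\Det^\ast\dl/\det(\Phi_i,\Phi_j)]_M$ is a fixed transcendental quantity depending on the geometry of $(M,L,\rho,h)$; for your extraction to produce the universal constant $(\zeta_Q(0)-4h^0(L)+2)\log 2$ you would need to establish the exact $\varepsilon\to 0$ asymptotics of $[\Det^\ast\dla/\det(\phiaone_i,\phiaone_j)]_{R_\varepsilon}$, which is precisely the information the proposition is designed to \emph{supply} (via the gluing formula) in the proof of Theorem \ref{thm:insertion}. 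The specific mechanism you offer for this step fails: the Polyakov--Alvarez formula (Theorem \ref{thm:liouville}) only controls conformal changes of $(\rho,h)$ on a \emph{fixed} bordered surface, and $R_\varepsilon$ and $R_{\varepsilon_0}$ are not conformally equivalent surfaces with boundary (the modulus of the separating annulus diverges), so no conformal factor ``supported in $B$'' relates their determinants. The claimed cancellation of ``every power of $\varepsilon$ and every transcendental term'' is asserted, not demonstrated, and there is no route to it along these lines without circularity.

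The paper's proof is a direct operator computation that avoids the gluing formula entirely. Writing $\N_{\Gamma_\varepsilon}=\A_{R_\varepsilon}+\Sigma\circ\A_{B_\varepsilon}\circ\Sigma$ and using the explicit Fourier-mode expressions for the disk operator \eqref{eqn:dn_disk} and for $\A_{R_\varepsilon}$ (Lemma \ref{lem:dn_degeneration}), one finds that on the orthogonal complement of ${\mathbb A}_{\Gamma_\varepsilon}$ the singular parts cancel mode by mode, leaving $\N_{\Gamma_\varepsilon}^2=4I+C(\varepsilon)$ with $C(\varepsilon)\to 0$ in trace norm. Proposition \ref{prop:det_misc}~(3) then gives $\log\Det_Q^\ast\N_{\Gamma_\varepsilon}\to(\zeta_Q(0)-\dim_\R{\mathbb A}_{\Gamma_\varepsilon})\log 2$, and Lemma \ref{lem:cokernels} together with the good-framing condition yields $\dim_\R{\mathbb A}_{\Gamma_\varepsilon}=4h^0(L)-2$. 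Your dimension count agrees with this last step, but the analytic core of your argument --- determining the limit of the interior determinant on $R_\varepsilon$ --- is both unnecessary for the proposition and unobtainable by the tools you invoke.
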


By direct computation, as in \cite{W} one proves 
\begin{lemma} \label{lem:dn_degeneration}
For $1/2\geq \varepsilon>0$, 
$
\A_{R_\varepsilon}=S_\varepsilon+ \varepsilon U_\varepsilon
\A_{R_1}(I+T_\varepsilon\A_{R_1})^{-1}U_\varepsilon
$,
where 
\begin{align*}
S_\varepsilon(f,g)(\theta) &=
 \sum_{n\neq
0}\left(\frac{\varepsilon^n-\varepsilon^{-n}}{\varepsilon^n+\varepsilon^{-n}}\right)
\left(\begin{matrix}0& -i\\ i&-\varepsilon/n\end{matrix}\right){\hat
f(n)\choose \hat g(n)}e^{in\theta} \\
U_\varepsilon (f,g)(\theta)&=
 \sum_{n\neq
0}\frac{2}{\varepsilon(\varepsilon^n+\varepsilon^{-n})}
{\hat f(n)\choose\varepsilon \hat g(n)}e^{in\theta} \\
T_\varepsilon (f,g)(\theta)&=
 \sum_{n\neq
0}\left(\frac{\varepsilon^n-\varepsilon^{-n}}{\varepsilon^n+\varepsilon^{-n}}\right)
\left(\begin{matrix}1/n& -i\\ i&0\end{matrix}\right){\hat
f(n)\choose  \hat g(n)}e^{in\theta}
\end{align*}
for functions $f,g$ in \eqref{eqn:fg}.
\end{lemma}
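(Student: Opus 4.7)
The plan is to reduce the computation to explicit Fourier analysis on the annulus $A_\varepsilon = \{\varepsilon \leq |z|\leq 1\}$ sitting inside $B$. Since $\rho \equiv 1$ and $\Vert\tau_L\Vert = 1$ on $B$, the framing $\tau_L$ gives an isometric holomorphic trivialization of $L$ on $A_\varepsilon$, so elements of $\ker P_L$ on $A_\varepsilon$ are in bijection with Laurent series $\varphi = \sum_{n\in\Z} c_n z^n$ via the real/imaginary splitting from Section \ref{sec:real}. On a circle $|z|=r$, both $\bgammapp$ and $J\bgammap$ act mode by mode as elementary linear combinations of $r^n$ and $r^{-n}$ times the real and imaginary parts of $c_n$. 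This is the direct analog of the scalar situation of \cite{W}, except that the mixed Alvarez conditions now produce the $2\times 2$ matrix-valued symbols appearing in the statement, rather than scalar-valued Fourier multipliers.

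The first step is to invert, mode by mode, the boundary value problem $P_L\Phi = 0$ on $A_\varepsilon$ with arbitrary imaginary-Alvarez data prescribed on \emph{both} boundary circles $\Gamma_\varepsilon$ and $\Gamma_1 = \{|z|=1\}$. For each $n \neq 0$ this amounts to solving a small linear system in $(\real c_n, \imag c_n, \real c_{-n}, \imag c_{-n})$, whose inverse yields explicit formulas for the $J\bgammap$ traces on each of $\Gamma_\varepsilon$ and $\Gamma_1$ in terms of the two sets of imaginary-Alvarez data. The $\tanh$-like factors $(\varepsilon^n - \varepsilon^{-n})/(\varepsilon^n + \varepsilon^{-n})$ appearing in $S_\varepsilon$ and $T_\varepsilon$, and the propagators $2/(\varepsilon(\varepsilon^n + \varepsilon^{-n}))$ appearing in $U_\varepsilon$, emerge naturally as the matrix entries of these inverses once one reads off the coefficient with which the data on the ``other'' circle contributes.

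Next I would impose the matching condition on $\Gamma_1$ with a section on $R_1 = M\setminus B_1$. By the definition of $\A_{R_1}$, the imaginary-Alvarez data on $\Gamma_1$ and its $J\bgammap$ image are related via $\A_{R_1}$ for any element of $\ker P_L$ on $R_1$. Writing a global solution on $R_\varepsilon = R_1 \cup A_\varepsilon$ as the gluing of its two restrictions and demanding continuity of both the trace and its conjugate trace across $\Gamma_1$ yields a linear equation of the form $(I + T_\varepsilon \A_{R_1})X = U_\varepsilon(f,g)$ for an auxiliary boundary datum $X$ on $\Gamma_1$; solving this is precisely what introduces the resolvent $(I + T_\varepsilon \A_{R_1})^{-1}$. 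Propagating the resulting section back through the annulus and reading off $J\bgammap$ on $\Gamma_\varepsilon$ decomposes the output into a ``bare annulus with trivial matching'' piece $S_\varepsilon(f,g)$ plus the correction $\varepsilon U_\varepsilon \A_{R_1}(I + T_\varepsilon\A_{R_1})^{-1}U_\varepsilon(f,g)$, with the explicit $\varepsilon$ pre-factor arising from the mismatch in scales between $ds_\rho$ on $\Gamma_\varepsilon$ and on $\Gamma_1$.

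The main obstacle is sign and normalization bookkeeping rather than any novel analytic input. One must track that the outward normals on the two components of $\partial A_\varepsilon$ point in opposite radial directions, that the Hodge star and induced measure on $\Gamma_\varepsilon$ and $\Gamma_1$ differ by an explicit factor of $\varepsilon$, and that the operator $J$ defined in \eqref{eqn:tau} is consistently applied via its $\ast$ presentation. The $n = 0$ Fourier mode corresponds to the constant section $\tau_L$ itself: it lies in $\ker\dl$ and projects to the excluded subspace ${\mathbb A}_\Gamma^{\ker}$, so it does not contribute to $\A_{R_\varepsilon}$ on the orthogonal complement where our operator acts, which is why the displayed sums all omit $n=0$. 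Once this bookkeeping is dispensed with, the identity follows by a direct mode-by-mode computation parallel to the scalar calculation in \cite{W}.
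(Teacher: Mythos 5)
Your plan is essentially the paper's own proof: the paper gives no details beyond ``by direct computation, as in \cite{W}'', and the intended computation is exactly the mode-by-mode transfer through the annulus $\{\varepsilon\le|z|\le 1\}$ followed by matching against $\A_{R_1}$ on $|z|=1$ that you describe, with $S_\varepsilon$, $T_\varepsilon$, $U_\varepsilon$ arising as entries of the inverted per-mode linear system and the resolvent $(I+T_\varepsilon\A_{R_1})^{-1}$ coming from the matching condition. The only quibble is your identification of the excluded zero mode: for $\A_{R_\varepsilon}$ the relevant excluded subspace is ${\mathbb A}_{R_\varepsilon}^{\alv}$ of Proposition \ref{prop:extension} (only the $n=0$ mode of $g$ is removed from the domain, while the $n=0$ mode of $f$ is simply annihilated, as in Example \ref{ex:boundary_operator_disk}), not ${\mathbb A}_\Gamma^{\ker}$.
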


We also note the following estimates.

\begin{lemma}  Assume $1/2\geq \varepsilon>0$.
\begin{enumerate}
\item  $(A_\varepsilon-S_{\varepsilon})$ is trace class with norm bounded
by $8\varepsilon^2$.
\item $U_\varepsilon$ is trace class with uniformly bounded norm.
\item If $T_0$ is defined by
$$
T_0 (f,g)(\theta)=
 \sum_{n\neq
0}
\left(\begin{matrix}-1/|n|& i\sigma(n)\\-i\sigma(n)&0\end{matrix}\right){\hat
f(n)\choose \hat g(n)}e^{in\theta}
$$
then $(T_\varepsilon-T_0)$ is trace class with norm bounded
by $8\varepsilon^2$.
\end{enumerate}
\end{lemma}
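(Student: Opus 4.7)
The plan is to exploit the fact that each of the three operators in the lemma is block diagonal in the Fourier basis $\{e^{in\theta}\}_{n\in\Z}$ on the unit circle, with explicit $2\times 2$ matrix blocks at each mode $n\neq 0$. The trace class norm of such a block-diagonal operator equals the sum over $n$ of the trace norms (i.e.\ sums of singular values) of the individual blocks, so each claim reduces to a mode-by-mode calculation followed by summation of a geometric series. The single analytic input driving everything is the elementary estimate
\begin{equation*}
1-\tanh(|n|\,|\log\varepsilon|)\;=\;\frac{2}{1+e^{2|n|\,|\log\varepsilon|}}\;\leq\; 2\varepsilon^{2|n|},
\end{equation*}
combined with the identity that the scalar $\alpha_n:=(\varepsilon^n-\varepsilon^{-n})/(\varepsilon^n+\varepsilon^{-n})$ appearing in the formulas for $S_\varepsilon$ and $T_\varepsilon$ equals $-\sigma(n)\tanh(|n|\,|\log\varepsilon|)$.

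For (1), I would read off the $n$-th Fourier block of $A_\varepsilon - S_\varepsilon$ directly from Example \ref{ex:boundary_operator_disk} and from the formula for $S_\varepsilon$ in Lemma \ref{lem:dn_degeneration}. Using $\sigma(n)/n=1/|n|$ together with the identity for $\alpha_n$, the block factors as $(1-\tanh(|n|\,|\log\varepsilon|))$ times a $2\times 2$ matrix whose entries are uniformly $O(1)$ in $n$ (with the lower-right entry $-\varepsilon/|n|$ in fact decaying). Hence the trace norm of the $n$-th block is bounded by $C\varepsilon^{2|n|}$, and summing the geometric series gives trace norm $\leq 2C\varepsilon^2/(1-\varepsilon^2)\leq 8\varepsilon^2$ for $\varepsilon\leq 1/2$.

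Part (2) requires no subtraction: $U_\varepsilon$ is already diagonal in the Fourier basis, with $n$-th diagonal entries $2/[\varepsilon(\varepsilon^n+\varepsilon^{-n})]$ and $2/(\varepsilon^n+\varepsilon^{-n})$. Using $\varepsilon^n+\varepsilon^{-n}\geq \varepsilon^{-|n|}$ these are dominated by $2\varepsilon^{|n|-1}$ and $2\varepsilon^{|n|}$ respectively, so the trace class norm is at most $\sum_{n\neq 0}(2\varepsilon^{|n|-1}+2\varepsilon^{|n|})=4(1+\varepsilon)/(1-\varepsilon)$, which is uniformly bounded for $\varepsilon\leq 1/2$.

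Part (3) is parallel to (1). The upper-left entry of the $n$-th block of $T_\varepsilon-T_0$ collapses to $(1-\tanh(|n|\,|\log\varepsilon|))/|n|$, the off-diagonal entries become $\pm i\sigma(n)(1-\tanh(|n|\,|\log\varepsilon|))$, and the lower-right vanishes; so the block factors as $(1-\tanh(|n|\,|\log\varepsilon|))$ times a uniformly bounded $2\times 2$ matrix, and the same geometric series produces the claimed $8\varepsilon^2$ bound. The main obstacle is really just bookkeeping: keeping signs between $\alpha_n$ and $\sigma(n)$ straight so that $A_\varepsilon-S_\varepsilon$ and $T_\varepsilon-T_0$ really carry the vanishing factor $(1-\tanh)$ rather than the bounded factor $(1+\tanh)$, and verifying that the trace class norm of a block-diagonal operator is the sum (not the sup) of the trace norms of the blocks. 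Once this is arranged the estimates themselves are routine geometric sums, with no spectral or analytic input beyond the one displayed inequality.
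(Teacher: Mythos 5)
The paper offers no written proof of this lemma (it is one of the ``direct computation'' statements following Lemma \ref{lem:dn_degeneration}), and your overall strategy --- block-diagonality in the Fourier basis, trace norm equal to the sum of the per-mode trace norms, the identity $\alpha_n=(\varepsilon^n-\varepsilon^{-n})/(\varepsilon^n+\varepsilon^{-n})=-\sigma(n)\tanh(|n|\,|\log\varepsilon|)$, the bound $1-\tanh(|n|\,|\log\varepsilon|)\leq 2\varepsilon^{2|n|}$, and a geometric series --- is surely the intended computation. Parts (2) and (3) of your argument are correct as written: for (2) the diagonal entries are dominated by $2\varepsilon^{|n|-1}$ and $2\varepsilon^{|n|}$, and for (3) one checks
$T_\varepsilon-T_0=\sum_{n\neq 0}(\tanh(|n|\,|\log\varepsilon|)-1)\left(\begin{smallmatrix}-1/|n|& i\sigma(n)\\ -i\sigma(n)&0\end{smallmatrix}\right)$, exactly the factorization you claim.

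There is, however, a genuine sign problem in your treatment of (1). You propose to read the block of $A_\varepsilon$ off from Example \ref{ex:boundary_operator_disk}, i.e.\ to take $A_\varepsilon=\A_{B_\varepsilon}$ with blocks $\left(\begin{smallmatrix}0&-i\sigma(n)\\ i\sigma(n)&-\varepsilon/|n|\end{smallmatrix}\right)$. With that choice the difference is
$\A_{B_\varepsilon}-S_\varepsilon=\sum_{n\neq 0}\bigl(1+\tanh(|n|\,|\log\varepsilon|)\bigr)\left(\begin{smallmatrix}0&-i\sigma(n)\\ i\sigma(n)&-\varepsilon/|n|\end{smallmatrix}\right)$:
the dangerous factor $(1+\tanh)\to 2$ appears, the off-diagonal entries do not decay in $n$, and the operator is not even compact, let alone trace class with norm $O(\varepsilon^2)$. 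So the factor $(1-\tanh)$ you assert cannot be obtained from this identification; you flag the sign bookkeeping as the main obstacle but then resolve it incorrectly. The statement is true only if $A_\varepsilon$ denotes the operator with blocks $\left(\begin{smallmatrix}0&i\sigma(n)\\ -i\sigma(n)&\varepsilon/|n|\end{smallmatrix}\right)$ --- the \emph{negative} of the matrix in \eqref{eqn:dn_disk}, which is precisely the leading block of $\A_{R_\varepsilon}$ appearing in the proof of Proposition \ref{prop:dn_pinch_one} (the sign flip reflects the opposite orientations of $\partial B_\varepsilon$ and $\partial R_\varepsilon$). With that reading one gets $A_\varepsilon-S_\varepsilon=\sum_{n\neq0}(1-\tanh(|n|\,|\log\varepsilon|))\left(\begin{smallmatrix}0&i\sigma(n)\\ -i\sigma(n)&\varepsilon/|n|\end{smallmatrix}\right)$ and your estimate goes through verbatim. (A minor further point: summing the per-block \emph{trace} norms, whose matrix factor is $\sqrt{4+\varepsilon^2/n^2}$, gives a constant slightly larger than $8$ in (1) and (3); the stated $8\varepsilon^2$ is obtained with the operator norm, and in any case only the order $O(\varepsilon^2)$ is used later.)
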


\begin{lemma} \label{lem:invertible}
For $\varepsilon>0$ sufficiently small , $I+T_\varepsilon \A_{R_1}$  is uniformly invertible on the orthogonal complement of 
${\mathbb A}_{\Gamma}$.
\end{lemma}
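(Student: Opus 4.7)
Plan. The argument proceeds by reducing to a limit operator at $\varepsilon = 0$ and handling that via Fredholm theory.

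First, from the preceding lemma the difference $T_\varepsilon - T_0$ has operator (indeed trace) norm $O(\varepsilon^2)$, and by Corollary \ref{cor:N} the operator $\A_{R_1}$ is a bounded zero-th order pseudodifferential operator. Hence $T_\varepsilon \A_{R_1} \to T_0 \A_{R_1}$ in operator norm as $\varepsilon \to 0$. A routine Neumann-series argument reduces the desired uniform invertibility to the single claim that $I + T_0 \A_{R_1}$ is invertible on $\mathbb{A}_\Gamma^\perp$ with bounded inverse: for then the perturbation $(T_\varepsilon - T_0)\A_{R_1}$ is small enough that $(I + T_\varepsilon \A_{R_1})^{-1}$ exists and is uniformly bounded on $\mathbb{A}_\Gamma^\perp$.

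Second, the explicit Fourier expression for $T_0$ shows its symbol decays like $1/|n|$, so $T_0$ is a pseudodifferential operator of order $-1$ on $\Gamma = S^1$. Composed with the zero-th order $\A_{R_1}$ this yields an operator of order $-1$, hence compact on $L^2$. Thus $I + T_0 \A_{R_1}$ is a compact perturbation of the identity and in particular Fredholm of index zero; it suffices to show its kernel on $\mathbb{A}_\Gamma^\perp$ is trivial.

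Third, one identifies $\ker(I + T_0 \A_{R_1}) \subset \mathbb{A}_\Gamma$. The key observation is that, comparing \eqref{eqn:dn_disk} at $\varepsilon = 1$ with the explicit matrix for $T_0$, the operator $T_0$ is obtained from the disk boundary operator $\A_{B_1}$ by conjugation by the natural swap of the two components of boundary data. Hence $(I + T_0 \A_{R_1})(f,g) = 0$ says that the boundary trace of the unique harmonic section $\Phi$ on $R_1$ with $\bmpp \Phi = (f,g)$ matches (under this swap) the boundary trace of a harmonic section $\Psi$ on $B_1$. Since the framing $\tau_L$ is a globally nowhere-vanishing holomorphic section on a neighborhood of $B_1$, the ``good framing'' setting of Definition \ref{def:good_framing} applies and one can glue $\Phi$ and $\Psi$ across $\Gamma$: the resulting global object is either an element of $\ker \dl$ on $M$, contributing under $\bgammapp$ to $\mathbb{A}_\Gamma^{\ker}$, or an element of $\ker \dla$ on $M_\Gamma$, contributing under $\delta_\Gamma J \bmgammap$ to $\mathbb{A}_\Gamma^{\alv}$. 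Hence $\ker(I + T_0\A_{R_1}) \subset \mathbb{A}_\Gamma$, so the operator is injective on $\mathbb{A}_\Gamma^\perp$ and, being index zero, is an isomorphism there with bounded inverse.

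The main obstacle is the kernel identification in the third step: translating the algebraic equation $T_0 \A_{R_1}(f,g) = -(f,g)$ into a genuine matching condition for boundary data of holomorphic sections requires carefully tracking the Hodge star, the involution $\star$ from \eqref{eqn:tau}, and the swap between real and imaginary parts induced by the real structure from Section \ref{sec:abc}. Once the identification $T_0 \leftrightarrow \A_{B_1}$ up to this swap is made precise, the patching of $\Phi$ and $\Psi$ across $\Gamma$ is essentially a removable-singularity/Hartogs argument, and the remaining compact-perturbation analysis is routine.
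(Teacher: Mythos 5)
Your overall strategy matches the paper's: pass to the limit operator $I+T_0\A_{R_1}$ using the trace-norm estimate $\Vert T_\varepsilon-T_0\Vert=O(\varepsilon^2)$, argue that invertibility on ${\mathbb A}_\Gamma^\perp$ reduces to injectivity there, and identify kernel elements with boundary data of global holomorphic sections. The paper's own proof consists of exactly these two moves, stated in two sentences: it suffices to show $I+T_0\A_{R_1}$ has no kernel on ${\mathbb A}_\Gamma^\perp$, and a direct computation shows that $(f,g)=-T_0\A_{R_1}(f,g)$ forces $\poisson_{R_1}(f,g)$ to extend to a global section in $\ker\dl$, hence $(f,g)\in{\mathbb A}_\Gamma^{\ker}\subset{\mathbb A}_\Gamma$. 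Your observation that $T_0$ is the unit-disk boundary operator \eqref{eqn:dn_disk} conjugated by the swap of the two components is precisely the mechanism behind that gluing, so your first and third steps are consistent with the intended argument.

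There is, however, a concrete error in your second step. The symbol of $T_0$ is
$\left(\begin{matrix}-1/|n| & i\sigma(n)\\ -i\sigma(n) & 0\end{matrix}\right)$;
only the $(1,1)$ entry decays, while the off-diagonal entries $\pm i\sigma(n)$ are of order zero (Hilbert-transform type). So $T_0$ is \emph{not} of order $-1$, $T_0\A_{R_1}$ is \emph{not} compact, and $I+T_0\A_{R_1}$ is not a compact perturbation of the identity. The Fredholm conclusion survives, but for a different reason: the order-zero parts of the symbols of $T_0$ and of $\A_{R_1}$ (the latter read off from \eqref{eqn:dn_disk}, Lemma \ref{lem:dn_degeneration}, or Proposition \ref{prop:symbol}) are both $\left(\begin{matrix}0 & i\sigma(n)\\ -i\sigma(n) & 0\end{matrix}\right)$, whose product is $+I$. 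Hence $T_0\A_{R_1}-I$ has vanishing principal symbol and is of negative order, so $I+T_0\A_{R_1}$ is a compact perturbation of $2I$ --- elliptic of order zero with principal symbol $2I$ --- and is therefore Fredholm of index zero; your injectivity argument then upgrades this to bounded invertibility on ${\mathbb A}_\Gamma^\perp$ as before. (Note also that the appeal to Definition \ref{def:good_framing} is not needed for the gluing step: in the setting of Section \ref{sec:asy_neumann} the framing is already assumed to be a global holomorphic section nonvanishing on $B$, and the glued object is simply a global element of $\ker\dl$, whose boundary data lies in ${\mathbb A}_\Gamma^{\ker}$.)
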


\begin{proof}
It suffices to show that $I+T_0 \A_{R_1}$ has no kernel on ${\mathbb A}_{\Gamma}^\perp$.  But by a direct computation, if 
$(f,g)=-T_0\A_{R_1}(f,g)$, then $\poisson_{R_1}(f,g)$ extends to a global section in $\ker\dl$. 
\end{proof}

\begin{proof}[Proof of Proposition \ref{prop:dn_pinch_one}]
By Lemma \ref{lem:dn_degeneration} we have on the orthogonal complement of ${\mathbb A}_{\Gamma_\varepsilon}$:
\begin{align*}
\Log \N_{\Gamma_\varepsilon}&= \log 2 + \tfrac{1}{2}\Log\left(\tfrac{1}{2}\N_{\Gamma_\varepsilon}\right)^2 
=  \log 2 +\tfrac{1}{2}\Log\left(I+C(\varepsilon)\right) \\
\log{\Det}^\ast_Q \N_{\Gamma_\varepsilon}&=(\zeta_Q(0)-\dim_\R {\mathbb A}_{\Gamma_\varepsilon})\log 2
+\tfrac{1}{2}\log\Det_Q^\ast(I+C(\varepsilon))
\end{align*}
More precisely, assume the orientation of $\Gamma$ is chosen 
to agree with $\partial R_\varepsilon$, and let $f,g$ be functions as in
\eqref{eqn:fg}. Let $\Sigma$ be the involution that sends $\hat f(n)\mapsto \hat f(-n)$ and $\hat g(n)\mapsto \hat g(-n)$.
 Now using \eqref{eqn:dn_disk},
\begin{align*}
\N_{\Gamma_\varepsilon}&=\A_{R_\varepsilon}+\Sigma\circ\A_{B_\varepsilon}\circ
\Sigma \\
\N_{\Gamma_\varepsilon}{f\choose g}
&=\sum_{n\neq 0}\left[\left(\begin{matrix}0& i\sigma(n)\\ 
-i\sigma(n)&\varepsilon/|n|\end{matrix}\right)+\Sigma\circ
\left(\begin{matrix}0& -i\sigma(n)\\ i\sigma(n)&-\varepsilon/|n|\end{matrix}\right)
\circ\Sigma\right]{\hat f(n)\choose \hat
g(n)} e^{in\theta} +\{\text{trace class}\} \\
&=\sum_{n\neq 0} 2{i\sigma(n)\hat g(n) \choose -i\sigma(n)\hat f(n)}e^{in\theta}
+\{\text{trace class}\} \\
\N_{\Gamma_\varepsilon}^2&=4I+\{\text{trace class}\}
\end{align*}
Now by Lemma \ref{lem:cokernels},
$$
\dim_\R {\mathbb A}_{\Gamma_\varepsilon}=\dim_\R\ker\dl -1+ \dim_\R\ker\dla-1
=2\dim_\R\ker\dl-2
=4h^0(L)-2
$$
Since $C(\varepsilon)\to 0$ in trace, the result follows from Proposition  \ref{prop:det_misc} (3).
\end{proof}

Next we assume $L$ has a framing given by a global meromorphic section with simple pole at $p$.  It is easy to see that for an appropriate annular coordinate  on the disk $L$ is isomorphic as a framed bundle to the canonical bundle with canonical framing.  In this case we  have the following asymptotics.
\begin{proposition} \label{prop:dn_pinch_two}
 If $\N_{\Gamma_\varepsilon}$ denotes the Neumann jump operator with respect to Alvarez boundary conditions defined by a \emph{global} meromorphic section $\tau_L
$ with simple pole at $p$, then
 as $\varepsilon\to 0$,
$$
\log{\Det}^\ast_Q \N_{\Gamma_\varepsilon}+\log(\varepsilon/2) \longrightarrow (\zeta_Q(0)-4h^0(L)-2)\log 2
$$
\end{proposition}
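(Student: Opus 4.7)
The plan is to follow the argument of Proposition \ref{prop:dn_pinch_one} closely, making two modifications to accommodate the simple pole of $\tau_L$ at $p$. Using the stated isomorphism of framed bundles, I may identify $(L,\tau_L)|_{B_\varepsilon}$ with $(K,-idz/z)$, reducing the computation inside $B_\varepsilon$ to that of the canonical bundle with its canonical framing. Since Lemma \ref{lem:dn_degeneration} only uses the local geometry near $\Gamma_\varepsilon$, the external asymptotics $\A_{R_\varepsilon}=S_\varepsilon + $ trace class remain unchanged. The key new feature is that $\A_{B_\varepsilon}$ for the canonical framing carries a single exceptional Fourier mode of size $O(1/\varepsilon)$, producing an eigenvalue of $\N_{\Gamma_\varepsilon}$ of order $2/\varepsilon$; this is the origin of the $\log(\varepsilon/2)$ correction.

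First I would derive the explicit Fourier formula for $\A_{B_\varepsilon}$ on $(K,-idz/z)$, analogous to Example \ref{ex:boundary_operator_disk}. Writing a holomorphic section as $\varphi=f(z)\,dz$ and setting $g(z)=izf(z)$, one has $\varphi = g(z)\cdot(-idz/z)$, so the boundary data in the $\tau_L$-framing is the Fourier expansion of $g|_{|z|=\varepsilon}$. Constant sections $f\equiv\text{const}$ give $g(\varepsilon e^{i\theta}) = i\varepsilon e^{i\theta}f$, a boundary mode of magnitude $\varepsilon$; so the Poisson extension of a unit mode in this frequency scales like $1/\varepsilon$. The resulting formula then has a $2\times 2$ block of order $1/\varepsilon$ in one exceptional Fourier mode (essentially the shift by one of the $n=0$ mode of \eqref{eqn:dn_disk}), with all other blocks bounded and of the same shape as \eqref{eqn:dn_disk}.

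Second I form $\N_{\Gamma_\varepsilon}=\A_{R_\varepsilon}+\Sigma\,\A_{B_\varepsilon}\,\Sigma$ and analyze $\N_{\Gamma_\varepsilon}^2$. On the ordinary modes one obtains $\N_{\Gamma_\varepsilon}^2 = 4I + $ trace class, exactly as in the proof of Proposition \ref{prop:dn_pinch_one}, contributing the $\zeta_Q(0)\log 2$ term. On the exceptional mode the eigenvalue of $\N_{\Gamma_\varepsilon}^2$ is asymptotically $(2/\varepsilon)^2$. For the dimension count I compute directly: the framing is generic on $M$ (for generic choice of $p$), so $\bgammapp$ is injective on $\ker\dl|_M$ and $\dim_\R \mathbb{A}_{\Gamma_\varepsilon}^{\ker}=2h^0(L)$; on the other hand the index theorem with $\deg(\tau_L|_{R_\varepsilon})=d+1$ and $\chi(R_\varepsilon)=1-2g$ gives $\dim_\R \ker\dla|_{R_\varepsilon}=2h^0(L)+1$ under the cokernel hypothesis (the extra real dimension being the restriction of $\tau_L$ itself, which is holomorphic on $R_\varepsilon$ and satisfies Alvarez conditions), and $\delta_\Gamma\bmgammap$ is injective on this kernel by unique continuation of holomorphic sections, so $\dim_\R \mathbb{A}_{\Gamma_\varepsilon}^{\alv}=2h^0(L)+1$. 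Thus $\dim_\R \mathbb{A}_{\Gamma_\varepsilon}=4h^0(L)+1$, and assembling,
\begin{equation*}
\log\Det_Q^\ast \N_{\Gamma_\varepsilon} = (\zeta_Q(0)-4h^0(L)-1)\log 2 - \log\varepsilon + o(1) = (\zeta_Q(0)-4h^0(L)-2)\log 2 - \log(\varepsilon/2) + o(1),
\end{equation*}
which rearranges to the claim.

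The main obstacle is verifying that the exceptional eigenvalue of $\N_{\Gamma_\varepsilon}^2$ is exactly $(2/\varepsilon)^2$ in leading order. This requires correctly matching the $1/\varepsilon$ mode of $\A_{B_\varepsilon}$ (coming from constant sections of $K$, which formally populate the cokernel of the disk problem since $\deg(\tau_L|_{B_\varepsilon})=-1$) with its counterpart in $\A_{R_\varepsilon}$ (associated to the boundary values of $\tau_L$, which lies in $\ker\dla|_{R_\varepsilon}$), through the orientation-reversal map $\Sigma$. A secondary difficulty is confirming that after projecting off this single mode, the remainder of $\N_{\Gamma_\varepsilon}^2-4I$ is trace class with norm going to zero, so that Proposition \ref{prop:det_misc}(3) produces no extra contribution in the $\varepsilon\to 0$ limit.
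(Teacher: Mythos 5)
Your proposal follows the same route as the paper: reduce to the computation of Proposition \ref{prop:dn_pinch_one}, recount $\dim_\R{\mathbb A}_{\Gamma_\varepsilon}=4h^0(L)+1$ (your derivation via the index theorem on $R_\varepsilon$ with $\deg(\tau_L|_{R_\varepsilon})=d+1$, plus injectivity of the two boundary maps, is correct and in fact more explicit than the paper's one-line assertion; note the injectivity of $\bgammapp$ on $\ker\dl$ is automatic here, not merely generic in $p$, since $\Phi/\tau_L$ is holomorphic on $B_\varepsilon$ with a zero at $p$ and real on $|z|=\varepsilon$, hence identically zero), and isolate one exceptional eigenvalue $\sim 2/\varepsilon$ whose logarithm produces the $-\log(\varepsilon/2)$ shift. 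The final bookkeeping is consistent with the claim.

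The genuine gap is exactly at the point you flag as "the main obstacle," and the heuristic you offer for it points at the wrong Fourier mode. You locate the $O(1/\varepsilon)$ block at the mode populated by constant sections of $K$ (the $n=1$ mode of $g=izu$, "the shift by one of the $n=0$ mode"), arguing that since their boundary values have magnitude $\varepsilon$, the Poisson extension of a unit mode scales like $1/\varepsilon$. But $\A_{B_\varepsilon}=J\bmp\poisson_{B_\varepsilon}$ only sees boundary data: the $n=1$ mode is extended by the \emph{holomorphic} section $(c/\varepsilon)\,dz$, for which $P_K$ vanishes and the real part of the boundary value is $O(1)$ (the interior factor $1/\varepsilon$ is cancelled by the factor $\varepsilon$ upon restriction to $|z|=\varepsilon$), so $\A_{B_\varepsilon}$ is bounded there. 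The exceptional mode is instead the \emph{constant} mode $(1,0)$: its would-be holomorphic extension is $i\tau_K=dz/z$, which has a pole at $p$ and so is not a section over the disk; the actual Poisson extension is non-holomorphic (e.g.\ $(\bar z/\varepsilon^2)\,dz$), its $\dbar$-derivative is large relative to the framing, and a direct computation gives $\A_{B_\varepsilon}(1,0)=(2/\varepsilon,0)$. This is the computation the paper performs, paired with $\A_{R_\varepsilon}(1,0)=(0,0)$ (because $(1,0)=\bmpp(\jmath(i\tau_L))$ and $i\tau_L$ is a holomorphic extension on $R_\varepsilon$ with vanishing real part), so that $\N_{\Gamma_\varepsilon}(1,0)\simeq(2/\varepsilon,0)$. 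Without this identification your argument does not close: carried out as written, your $n=1$ analysis would find no divergent eigenvalue at all, while the mode that actually diverges would be left inside the "remainder" that you need to be $4I$ plus a trace-class term tending to zero.
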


\begin{proof}
The computation is nearly identical to the one above, except now 
$\dim_\R {\mathbb A}_{\Gamma_\varepsilon}=4h^0(L)+1$, 
and the constant mode $(1,0)\not\in  {\mathbb A}^{\rm ker}_{\Gamma_\varepsilon}$.  By assumption, $\A_{R_\varepsilon}(1,0)=(0,0)$, and by direct computation for the canonical bundle on the disk, $\A_{B_\varepsilon}(1,0)=(2/\varepsilon,0)$.  Factoring this out from the determinant, the result follows.
\end{proof}

\subsection{Admissible metrics and asymptotics of $S(\sigma,f)$} \label{sec:arakelov}

Recall the definition of the Arakelov metric (cf.\ \cite{Ar, Fg,  CMP, F2}).  Given  a compact Riemann surface $M$ of genus $g\geq 1$, let 
$\{A_i,B_i\}_{i=1}^g$ be a symplectic set of generators of $H_1(M)$ and
choose $\{\omega_i\}_{i=1}^g$ to be a basis of abelian differentials normalized such that
$\int_{A_i}\omega_j=\delta_{ij}$.  Let 
$
\Omega_{ij}=\int_{B_i}\omega_j
$
 be the associated period matrix with theta function $\vartheta$.
  Set 
\begin{equation*} \label{E:bergman}
\mu=\frac{i}{2g}\sum_{i,j=1}^g (\imag \Omega)^{-1}_{ij}\omega_i\wedge\overline\omega_j
\end{equation*}
Then $\int_M\mu=1$.  The {\bf Arakelov-Green's function}
 $G(z,w)$ is symmetric with a zero of order one along the diagonal satisfying 
$
\partial\bar\partial\log G(z,w)=(\pi i) \mu
$, for $z\neq w$, normalized by
\begin{equation} \label{eqn:normalization}
\int_M\mu(z) \log G(z,w)= 0 
\end{equation}
  The {\bf Arakelov metric} $\rho_{\smallAr}=\rho_{\smallAr}(z)|dz|^2$  is defined by
\begin{equation}
\log \rho_{\smallAr}(z)=2\lim_{w\to z}\left\{ \log G(z,w)-\log |z-w|\right\} \label{E:metric}
\end{equation}

A hermitian metric $h$ on a line bundle $L\to M$ of degree is $d$ is {\bf admissible}
 in the sense of \cite{Fg}
if 
\begin{equation} \label{eqn:admissible}
\Ric(h)=-(2\pi id)\mu
\end{equation}
The Arakelov metric on $M$, considered as a hermitian metric on the anti-canonical bundle $K^\ast$, is admissible:
\begin{equation} \label{eqn:ricci}
\Ric(\rho_{\smallAr})=4\pi i(g-1)\mu
\end{equation}
In terms of the Hermitian-Einstein tensor and the scalar curvature, 
\eqref{eqn:admissible} and \eqref{eqn:ricci} become
\begin{align}
\begin{split} \label{eqn:curvatures}
dA\, \Omega_{L,h} &= (2\pi d)\mu  \\
dA\, R_{\rho_{\smallAr}}&= -8\pi(g-1)\mu  
\end{split}
\end{align}
For more details we refer to the papers cited above.

We now return to the situation in the previous section.  Let $R_\varepsilon=M\setminus B_\varepsilon$, where $B_\varepsilon$ is the coordinate neighborhood $|z|<\varepsilon$ centered at a point $p$.  Let $L\to M$ be a holomorphic line bundle with admissible metric $h$, and let $L(p)=L\otimes \Ocal(p)$.  Choosing an admissible metric on $\Ocal(p)$ gives an admissible metric on $L(p)$. 
Let $\hat\omega_0$ be a global holomorphic section of $L(p)$ that is nonvanishing at $p$, and let $\onep$ be  a global holomorphic section of $\Ocal(p)$ vanishing at $p$.  Using the framings given by $\hat\omega_0\otimes \onep^{-1}$ and $\hat\omega_0$, respectively, then 
 on $R_\varepsilon$,  $L$ and $L(p)$ are naturally isomorphic as framed bundles, and their hermitian metrics are conformal with factor $f(z)=-\log G(z,p)$. With this understood, we have the following simple computation.

\begin{lemma} \label{lem:liouville}
Let $S_\varepsilon(f)=S(0,f)$ denote the Liouville action \eqref{eqn:liouville} on $R_\varepsilon$.  Then $S_\varepsilon(f)\to 0$ as $\varepsilon\to 0$.
\end{lemma}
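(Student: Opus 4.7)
Set $\sigma=0$ in the Polyakov--Alvarez formula \eqref{eqn:liouville} to decompose $S_\varepsilon(f)=I_1+I_2+I_3$ into kinetic, bulk curvature, and boundary parts
\[
I_1=-\tfrac{1}{\pi}\int_{R_\varepsilon}|\nabla f|^2\,dA_{\hat\rho},\quad
I_2=-\tfrac{1}{\pi}\int_{R_\varepsilon}\bigl(2\Omega_{L,\hat h}+\tfrac{1}{2}R_{\hat\rho}\bigr)f\,dA_{\hat\rho},\quad
I_3=\tfrac{1}{\pi}\int_{\partial R_\varepsilon}(2\nu_{L,\hat h}-\kappa_{\hat\rho})f\,ds_{\hat\rho}.
\]
The crucial structural point is that the reference metric $\hat h$ is the admissible metric on $L(p)$ transported to $L$ along the framing identification $\tau_L=\hat\omega_0\otimes\onep^{-1}\leftrightarrow\hat\omega_0$, which in the framing $\tau_L$ is represented by the smooth, bounded function $\|\hat\omega_0\|_{L(p),\mathrm{adm}}^2$. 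Hence $\log\hat h$ extends smoothly across $p$, so $\nu_{L,\hat h}$ remains bounded on $\partial R_\varepsilon$ as $\varepsilon\to 0$, while $\Omega_{L,\hat h}\,dA_{\hat\rho}=2\pi(d+1)\mu$ on $R_\varepsilon$ by admissibility on $L(p)$.

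The bulk pieces vanish in the limit. From the above and \eqref{eqn:curvatures}, $(2\Omega_{L,\hat h}+\tfrac{1}{2}R_{\hat\rho})\,dA_{\hat\rho}=4\pi(d-g+2)\mu$, so $I_2=-4(d-g+2)\int_{R_\varepsilon}f\mu\to 0$ by dominated convergence and the normalization \eqref{eqn:normalization}. For $I_1$, integration by parts gives $I_1=\tfrac{1}{\pi}\int_{R_\varepsilon}f\Delta f\,dA_{\hat\rho}-\tfrac{1}{\pi}\int_{\partial R_\varepsilon}f\,\partial_{\hat n}f\,ds_{\hat\rho}$; the Poincar\'e--Lelong identity $\partial\bar\partial\log G(z,p)=\pi i\mu$ on $R_\varepsilon$ gives $\Delta f\cdot dA_{\hat\rho}=2\pi\mu$, so the bulk part of $I_1$ likewise reduces to $2\int_{R_\varepsilon}f\mu\to 0$.

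It remains to analyze the surviving boundary integrals, which I claim cancel. Using \eqref{E:metric}, on $\partial R_\varepsilon=\{|z-p|=\varepsilon\}$ one has $f=-\log\varepsilon-\tfrac{1}{2}\log\hat\rho(p)+o(1)$, $\partial_{\hat n}f=\hat\rho(p)^{-1/2}/\varepsilon+O(1)$ (the outward normal of $R_\varepsilon$ points toward $p$), $\kappa_{\hat\rho}=-\hat\rho(p)^{-1/2}/\varepsilon+O(1)$, and $ds_{\hat\rho}=\hat\rho(p)^{1/2}\varepsilon\,d\theta+O(\varepsilon^2)$. A direct leading-order computation yields $-\tfrac{1}{\pi}\int_{\partial R_\varepsilon}f\,\partial_{\hat n}f\,ds_{\hat\rho}=2\log\varepsilon+\log\hat\rho(p)+o(1)$, while in $I_3$ the $\nu_{L,\hat h}$-piece is $O(\varepsilon\log\varepsilon)=o(1)$ (since $\nu_{L,\hat h}$ is bounded at $p$) and the $\kappa_{\hat\rho}$-piece contributes $-2\log\varepsilon-\log\hat\rho(p)+o(1)$. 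Both the divergent $\log\varepsilon$ terms and the finite $\log\hat\rho(p)$ constants cancel between these contributions, so $S_\varepsilon(f)\to 0$. The main obstacle is correctly identifying $\hat h$ as the \emph{smooth} pushforward of $h_{L(p),\mathrm{adm}}$ rather than as $h_{L,\mathrm{adm}}$ (whose expression in the meromorphic framing $\tau_L$ blows up at $p$), since this is what keeps $\nu_{L,\hat h}$ bounded and makes the above cancellation possible.
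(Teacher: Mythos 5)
Your proof is correct and follows essentially the same route as the paper: identify $\hat h$ as the smooth admissible metric on $L(p)$ transported along the framing, kill the bulk terms via admissibility and the normalization $\int_M \mu\log G(\cdot,p)=0$, integrate the kinetic term by parts, and observe that the surviving boundary terms cancel. The only difference is that you carry out the leading-order boundary computation explicitly where the paper simply asserts that the remaining two boundary integrals vanish in the limit; your expansions and signs (with the outward normal of $R_\varepsilon$ pointing toward $p$) check out.
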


\begin{proof}
Note that the local expression for the metric in the framing on $L(p)$ is continuous as $\varepsilon\to 0$.  Hence, if we let $\hat h$ denote the metric on $L(p)$ and $h$ that on $L$, then by \eqref{eqn:liouville},
$$
S_\varepsilon(f)=-\frac{1}{\pi}\int_{M_\varepsilon} dA_{\rho}|\nabla f|^2
-\frac{1}{2\pi}\int_{M_\varepsilon} dA_{\rho}(4\Omega_{L(p),\hat h}+R_\rho)f
+\frac{1}{\pi}\int_{\partial M_\varepsilon} ds_\rho (2\nu_{L(p),\hat h}-\kappa_\rho)f
$$
Using \eqref{eqn:normalization}, \eqref{eqn:curvatures},  and the remark above,
\begin{align*}
S_\varepsilon(f) &\simeq -\frac{1}{\pi}\int_{M_\varepsilon} dA_{\rho}|\nabla f|^2
-\frac{1}{\pi}\int_{\partial M_\varepsilon} ds_\rho \, \kappa_\rho f \\
&= \frac{1}{\pi}\int_{M_\varepsilon} dA_{\rho}\, f\Delta f 
-\frac{1}{\pi}\int_{\partial M_\varepsilon} dA_{\rho}\, f\partial_n f
-\frac{1}{\pi}\int_{\partial M_\varepsilon} ds_\rho \, \kappa_\rho f \\
&\simeq
-\frac{1}{\pi}\int_{\partial M_\varepsilon} dA_{\rho}\, f\partial_n f
-\frac{1}{\pi}\int_{\partial M_\varepsilon} ds_\rho \, \kappa_\rho f
\end{align*}
which vanishes as $\varepsilon\to 0$.
\end{proof}

\subsection{Proof of  Theorem \ref{thm:insertion}} \label{sec:insertion}
Let $L\to M$ with  $\deg L=d$ and $h^1(L)=0$, and set $L(p)=L\otimes\Ocal(p)$.
  Set $N=h^0(L)=d-g+1$ and $m=2N+1$.
 Let  $\{\omega_i\}_{i=1}^{N}$ be a fixed basis for $H^0(M,L)$, 
and set $\hat\omega_i=\omega_i\otimes {\onep}$.
We assume that the framings  $\hat\omega_0\otimes {\one}^{-1}_p$ and
 $\hat\omega_0$ are generic and good in the sense of Definitions \ref{def:generic_framing} and \ref{def:good_framing}. 
 We will need  technical results on degenerations of sections. 
 The proofs of the following two lemmas are straightforward and will be omitted.
 
 \begin{lemma} \label{lem:degeneration_cokernel}
 With the assumption $h^1(L)=0$, $\ker P_L^\dagger$ $($and therefore also $\ker P_{L(p)}^\dagger$$)$ vanishes on $R_\varepsilon$ for $\varepsilon>0$ sufficiently small.
 \end{lemma}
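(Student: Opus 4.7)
My plan is to argue by contradiction, using Serre duality to reduce to a vanishing statement for holomorphic sections of a dual bundle, and then using the imaginary Alvarez boundary condition on the shrinking circles to force the limit of any hypothetical family of such sections to extend holomorphically across $p$.

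First, by Proposition \ref{prop:serre}, with Alvarez boundary conditions
$$
\ker P_L^\dagger \simeq (\ker P_{K\otimes L^\ast})^\dagger
$$
on $R_\varepsilon$, where $K\otimes L^\ast$ is framed by $(-idz/z)\otimes(\hat\omega_0\otimes \onep^{-1})^{-1}$. By Remark \ref{rem:kernels}, $\ker P_{K\otimes L^\ast}$ corresponds to holomorphic sections $\psi$ of $K\otimes L^\ast\to R_\varepsilon$ whose expression in the framing takes real values on $\partial R_\varepsilon$. Hence it suffices to prove: for $\varepsilon>0$ small, any such $\psi$ is identically zero.

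Suppose not. Then there exist $\varepsilon_n\downarrow 0$ and nonzero holomorphic sections $\psi_n$ of $K\otimes L^\ast\to R_{\varepsilon_n}$ whose representatives in the framing are real on $|z|=\varepsilon_n$. Fix a small $r_0>0$ and normalize so that $\sup_{M\setminus B_{r_0}}|\psi_n|=1$ (measured in any fixed smooth metric). By a standard normal-family argument applied to holomorphic sections on the fixed open set $M\setminus \overline{B_{\varepsilon_n}}$, a subsequence of $\{\psi_n\}$ converges locally uniformly on $M\setminus\{p\}$ to a holomorphic section $\psi$ of $K\otimes L^\ast$ on $M\setminus\{p\}$, with $\sup_{M\setminus B_{r_0}}|\psi|=1$ (so $\psi\not\equiv 0$).

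The main step is to show $\psi$ extends holomorphically across $p$. In the local framing near $p$, write the Laurent expansion $\psi_n(z)=\sum_{k\in\Z}a_k^{(n)}z^k$ valid on the annulus $\varepsilon_n\le|z|\le r_0$. The real-boundary condition on $|z|=\varepsilon_n$ reads
$$
\sum_{k\in\Z} a_k^{(n)}\varepsilon_n^k e^{ik\theta}\in\R\qquad\text{for all }\theta,
$$
which is equivalent to the symmetry $a_{-k}^{(n)}=\varepsilon_n^{2k}\,\overline{a_k^{(n)}}$ for every $k$. By Cauchy estimates on the fixed circle $|z|=r_0/2$, the nonnegative coefficients $a_k^{(n)}$ ($k\geq 0$) are uniformly bounded; the symmetry then forces $|a_{-k}^{(n)}|\leq C\varepsilon_n^{2k}$, which tends to $0$ for each $k\geq 1$. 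Passing to the limit coefficient-wise, $\psi$ has only nonnegative Laurent coefficients at $p$ and hence extends to a holomorphic section of $K\otimes L^\ast$ over all of $M$. Since $h^0(K\otimes L^\ast)=h^1(L)=0$ by the hypothesis and Serre duality, $\psi\equiv 0$, contradicting the normalization.

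For $L(p)$, the same argument applies verbatim: the framing on $L(p)$ is induced by $\hat\omega_0$ (nonvanishing at $p$), and the dual framed bundle is $K\otimes L(p)^\ast$ with framing $(-idz/z)\otimes \hat\omega_0^{-1}$, whose Laurent analysis is identical. Moreover, the sequence $0\to L\to L(p)\to L(p)|_p\to 0$ gives $h^1(L(p))\leq h^1(L)=0$, so the Serre-dual vanishing $h^0(K\otimes L(p)^\ast)=0$ needed in the contradiction step is also in force. The main obstacle I expect is the justification of local uniform convergence on $M\setminus\{p\}$ (one must check that the normalization gives a genuine bound on compact subsets of every small punctured neighborhood of $p$, which does follow from the maximum principle applied to the Laurent tail estimate above), but the coefficient-by-coefficient argument makes this routine.
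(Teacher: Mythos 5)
The paper does not actually prove this lemma---it is one of the two results whose proofs are declared ``straightforward'' and omitted---so there is no proof of record to compare against; I can only assess your argument on its own terms. Your route (Serre duality, Proposition \ref{prop:serre}, to reduce to holomorphic sections of $K\otimes L^\ast$ on $R_\varepsilon$ that are real on $|z|=\varepsilon$ in the induced framing, then a normal-families argument showing that a hypothetical nonvanishing family would produce a nonzero element of $H^0(M,K\otimes L^\ast)$, which vanishes since $h^0(K\otimes L^\ast)=h^1(L)=0$) is correct, and it is entirely in the spirit of the degeneration arguments the paper does carry out: the coefficient symmetry $a_{-k}^{(n)}=\varepsilon_n^{2k}\overline{a_k^{(n)}}$ is exactly the one exploited in the proof of Lemma \ref{lem:boundary_det}.

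One step is stated in a way that is literally circular: you invoke ``Cauchy estimates on the fixed circle $|z|=r_0/2$'' to bound the coefficients $a_k^{(n)}$, $k\geq 0$, but $|z|=r_0/2$ lies inside the region where you have no a priori bound on $\psi_n$---such a bound is precisely what you are trying to establish. The fix is immediate: compute the Laurent coefficients by integrating over $|z|=r_0$ (a circle contained in $\overline{R_{r_0}}$ and in the annulus of validity of the expansion), where the normalization gives $|\psi_n|\leq 1$; this yields $|a_k^{(n)}|\leq r_0^{-k}$ for all $k\in\Z$, hence via the symmetry $|a_{-m}^{(n)}|\leq(\varepsilon_n^2/r_0)^m$, and therefore uniform boundedness of $\psi_n$ on $\{\delta\leq|z|\leq r_0'\}$ for any fixed $0<\delta<r_0'<r_0$ and $n$ large---which is exactly what your normal-families step needs. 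Two further small points. First, the framing of $K\otimes L^\ast$ induced by $\hat\omega_0\otimes\onep^{-1}$ and $-idz/z$ is holomorphic and nonvanishing at $p$ (the pole of $dz/z$ cancels the zero of $\onep$), so removability for your coefficient function is indeed removability for $\psi$; for $L(p)$ the induced framing $(-idz/z)\otimes\hat\omega_0^{-1}$ has a simple pole at $p$, so the limit section a priori lies only in $H^0(M,K\otimes L(p)^\ast\otimes\Ocal(p))=H^0(M,K\otimes L^\ast)=0$, which still gives the contradiction. Second, since $L$ and $L(p)$ are isomorphic as framed bundles on $R_\varepsilon$, the $L(p)$ case is literally the same statement as the $L$ case, which is presumably what the parenthetical ``and therefore also'' in the lemma intends; rerunning the argument as you do is harmless but unnecessary.
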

 
 \begin{lemma} \label{lem:degeneration_kernels1}
Let $\nu_i$ be the order of vanishing of $\omega_i$  at $p$.
 Then for any  sequence $\varepsilon_k\to 0$ there is a subsequence $($also denoted $\{\varepsilon_k\}$ $)$ and
a collection $\{\omega_{i,\varepsilon_k}\}_{i=1}^{m}$, 
$\omega_{m,\varepsilon_k}=\hat\omega_0\otimes {\one}^{-1}_p$ 
for all $k$, satisfying the following. 
\begin{itemize}
\item 
The set $\{\omega_{i,\varepsilon_k}\}_{i=1}^{m}$
is a real basis for the 
subspace of $\ker\dbar_L$ on $R_{\varepsilon_k}$
 with 
 $\omega_{i,\varepsilon_k}=f_{i,\varepsilon_k}\hat\omega_0\otimes
 {\one}_p^{-1}$ near $p$ satisfying
  $\imag (f_{i,\varepsilon_k})\bigr|_{|z|=\varepsilon_k}=0$ (cf.\ Remark \ref{rem:kernels}).  
\item 
 For each $1\leq j\leq N$,
\begin{align*} 
 \sup_{z\in R_{\varepsilon_k}}\varepsilon_k^{-\nu_i}\left|
\omega_{2j-1,\varepsilon_k}(z)-\omega_j(z)\right|&\lra 0
\\
 \sup_{z\in R_{\varepsilon_k}}
\varepsilon_k^{-\nu_i}\left|\omega_{2j,\varepsilon_k}(z)-i\omega_j(z)\right|&\lra 0
\end{align*} 
 as $k\to \infty$.
\end{itemize}
 \end{lemma}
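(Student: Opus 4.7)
The plan is to combine a dimension count with an explicit perturbative construction via meromorphic sections on $M$ made available by the hypothesis $h^1(L)=0$.  First I would verify that $\dim_\R\ker P_L^{\smalla}(R_{\varepsilon_k})=m$ for $\varepsilon_k$ small.  As a meromorphic section on $M$, $\tau_L=\hat\omega_0\otimes\onep^{-1}$ has exactly one pole (simple, at $p$), so by its degree $d$ it carries $d+1$ zeros, all lying in the interior of $R_{\varepsilon_k}$ once $\varepsilon_k$ is small.  Example \ref{ex:degree}(1) gives $\deg(\tau_L)=d+1$ on $R_{\varepsilon_k}$, and since $\chi(R_{\varepsilon_k})=1-2g$, Theorem \ref{thm:index} yields $\mathrm{index}\, P_L=2(d+1)+(1-2g)=2N+1=m$; combined with Lemma \ref{lem:degeneration_cokernel} this upgrades to $\dim_\R\ker P_L^{\smalla}=m$.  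The distinguished element $\omega_{m,\varepsilon_k}:=\hat\omega_0\otimes\onep^{-1}|_{R_{\varepsilon_k}}$ equals the constant $1$ in its own framing and hence has vanishing imaginary part on $\partial R_{\varepsilon_k}$, so by Remark \ref{rem:kernels} it belongs to $\ker P_L^{\smalla}$.

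Next I construct approximants of $\omega_j$ and $i\omega_j$.  In the framing near $p$, write $\omega_j=g_j(z)\tau_L$ in the annular coordinate.  Because $\tau_L$ has a simple pole while $\omega_j$ vanishes to order $\nu_j$ at $p$, the function $g_j$ is holomorphic and vanishes to order $\nu_j+1$, with Taylor expansion $g_j(z)=\sum_{n\geq\nu_j+1}a_n z^n$.  Riemann--Roch together with $h^1(L)=0$ shows that $h^0(L(np))=N+n$ for all $n\geq 0$, so by induction there exists for each $n\geq 1$ a meromorphic section $\sigma_n$ of $L$ on $M$, holomorphic away from $p$, with Laurent expansion in the framing of the form $z^{-n}+O(z^{-n+1})$.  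I then set
\[
\omega_{2j-1,\varepsilon_k}=\omega_j+\sum_{n\geq\nu_j+1}\overline{a_n}\,\varepsilon_k^{2n}\,\sigma_n,
\]
and define $\omega_{2j,\varepsilon_k}$ analogously starting from $i\omega_j$.  The coefficients $\overline{a_n}\,\varepsilon_k^{2n}$ are chosen so that the added terms cancel $\imag(g_j)$ mode-by-mode on $|z|=\varepsilon_k$, arranging for Alvarez boundary conditions to hold up to higher-order terms; an orthogonal projection onto $\ker P_L^{\smalla}$ absorbs any residual.  The Taylor decay $|a_n|\leq Cr^{-n}$ for some $r>0$ makes the sum effectively finite: truncating at large $n$ introduces a uniformly negligible error.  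Linear independence of $\{\omega_{i,\varepsilon_k}\}_{i=1}^m$ is forced by the distinct leading asymptotics of the different approximants (each has a different leading power in the framing at $p$, and $\omega_{m,\varepsilon_k}$ contributes the unique Laurent coefficient of $z^{-1}$), and the dimension count from the first paragraph closes that these form a basis.

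The main obstacle is the sup-norm estimate.  On any compact subset $K\subset M\setminus\{p\}$ each $\sigma_n$ is bounded uniformly, so the correction is $O(\varepsilon_k^{2\nu_j+2})$, yielding decay much faster than $\varepsilon_k^{\nu_j}$ on the interior.  The delicate region is the collar near $\partial R_{\varepsilon_k}$, where the added terms and $\omega_j$ itself are both of the same order $\varepsilon_k^{\nu_j}$; the construction is designed precisely so that the dominant Fourier mode of the imaginary part cancels exactly, leaving only a residual controlled by the next power of $\varepsilon_k$.  I would then apply a maximum-modulus argument on the thin annulus $\{\varepsilon_k\leq|z|\leq 2\varepsilon_k\}$, matched to the interior estimate, and extract a subsequence (possible because $\ker P_L^{\smalla}$ is finite-dimensional and the sections are uniformly controlled) to pass to the limit.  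This last extraction step is what forces passage to a subsequence in the statement and is where the asymptotic analysis must be carried out with care.
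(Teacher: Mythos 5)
The paper omits the proof of this lemma (it is declared ``straightforward''), so there is nothing to compare your argument against; I can only assess it on its own terms. Your overall architecture is the natural one and is surely close to what the author intended: the dimension count via Theorem \ref{thm:index} ($\deg(\tau_L)=d+1$ on $R_{\varepsilon_k}$, $\chi(R_{\varepsilon_k})=1-2g$, hence index $=2N+1=m$) combined with Lemma \ref{lem:degeneration_cokernel} correctly pins $\dim_\R\ker\dla=m$, and the mode-by-mode reflection $a_n\mapsto\overline{a_n}\,\varepsilon_k^{2n}$ is exactly the right mechanism, since for a holomorphic section the Alvarez condition forces the Laurent coefficients on $|z|=\varepsilon_k$ to satisfy $c_{-n}=\overline{c_n}\,\varepsilon_k^{2n}$.

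Two steps, however, do not close. First, your sections do not actually lie in $\ker\dla$: since $h_n(z)=z^{-n}+O(z^{-n+1})$ and each $\sigma_n$ carries a globally determined regular part near $p$, the reality condition holds only approximately, and ``an orthogonal projection onto $\ker P_L^{\smalla}$ absorbs any residual'' is precisely the step where the analytic content of the lemma lives. To justify it you need a quantitative statement that a holomorphic section whose imaginary part is small on $|z|=\varepsilon_k$ is correspondingly close to an exact kernel element, \emph{uniformly} as the domain degenerates; that is a bound on the inverse of the defect operator and is not supplied. Second, and more seriously, your own collar analysis exposes a term of the critical order that you then argue away incorrectly: the leading reflected correction $\overline{a_{\nu_j+1}}\,\varepsilon_k^{2\nu_j+2}\sigma_{\nu_j+1}$ has pointwise hermitian norm comparable to $|a_{\nu_j+1}|\,\varepsilon_k^{2\nu_j+2}\cdot\varepsilon_k^{-(\nu_j+1)}\cdot\Vert\tau_L\Vert\asymp |a_{\nu_j+1}|\,\varepsilon_k^{\nu_j}$ on $|z|=\varepsilon_k$ (because $\Vert\tau_L\Vert\sim C/|z|$ there), so $\varepsilon_k^{-\nu_j}\sup|\omega_{2j-1,\varepsilon_k}-\omega_j|$ does not tend to zero under your construction. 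The cancellation you invoke is of the \emph{imaginary part of the boundary value}, which is a different quantity from the difference $\omega_{2j-1,\varepsilon_k}-\omega_j$ being estimated; indeed the reality constraint forces this reflected mode into \emph{any} exact kernel element approximating $\omega_j$, so the stated estimate can only hold if $|\cdot|$ is read as the modulus of the framing coefficient $f_{i,\varepsilon_k}-g_j$ rather than the hermitian norm of the section. Your write-up needs to identify and resolve this, not assert that the borderline term vanishes. A minor further point: linear independence should be derived from convergence, on a compact set away from $p$, to the $\R$-independent family $\{\omega_j, i\omega_j, \tau_L\}$ (using unique continuation and the pole of $\tau_L$ at $p$), not from ``distinct leading powers,'' since the $\nu_j$ need not be distinct for an arbitrary basis.
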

 
Set $\Phi_m = \jmath(\hat\omega_0\otimes\one_p^{-1})$, 
$\widehat \Phi_m = \jmath(\hat\omega_0)$, and
$\widehat \Phi_{m+1} = \jmath(i\hat\omega_0)$.
For $1\leq i\leq m$,  
set $\Phi_{i,\varepsilon_k}=\jmath(\omega_{i,\varepsilon_k})$,  
 $\widehat\Phi_{i,\varepsilon_k}=\jmath(\omega_{i,\varepsilon_k}\otimes\onep)$, and
for $1\leq j\leq N$,  set 
$$
 \begin{array} {lclclcl}
  \Phi_{2j} & = &\jmath(i\omega_j) && \widehat\Phi_{2j} & = &
 \jmath(i\hat\omega_{2j}) \\
  \Phi_{2j-1} & = &\jmath(\omega_j) && \widehat\Phi_{2j-1} & = &
 \jmath(\hat\omega_{2j}) 
   \end{array}
$$
We will need the following asymptotics for the sections chosen as above.
\begin{lemma} \label{lem:boundary_det}  
Assume without loss of generality that the metric on $M$ is locally euclidean on a neighborhood of $p$. Then
as $k\to \infty$,
\begin{align*}
{\det}^\ast(\widehat\Phi_i'',\widehat\Phi_j'')_{{\partial R}_{\varepsilon_k}} &\simeq
{\det}(\Phi_i'',\Phi_j'')_{{\partial R}_{\varepsilon_k}}
(2\pi\varepsilon_k\Vert\widehat\Phi_m(p)\Vert^2)
 (\rho_{\smallAr}(p)\varepsilon_k^2)^{m-1} \\
\det(\widehat\Phi^{\smallA}_{i,\varepsilon_k},\widehat\Phi^{\smallA}_{j,\varepsilon_k})_{{\partial R}_{\varepsilon_k}} &\simeq
\det(\Phi^{\smallA}_{i,\varepsilon_k},\Phi^{\smallA}_{j,\varepsilon_k})_{{\partial R}_{\varepsilon_k}} (\rho_{\smallAr}(p)\varepsilon_k^2)^m  
\end{align*}
(where here because of the choice of  indexing, $\det^\ast$ denotes minus the determinant of the $m\times m$ minor, unlike in Theorem \ref{thm:bfk_adapted}).
\end{lemma}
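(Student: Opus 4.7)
The plan is to reduce both asymptotic identities to a direct boundary computation on $\{|z|=\varepsilon_k\}$, relying on two key structural facts. First, the two framings differ by multiplication by $\onep$: $\tau_L=\hat\omega_0\otimes\onep^{-1}$ and $\tau_{L(p)}=\hat\omega_0$. Consequently, for any $\omega\in H^0(M,L)$ the local coefficient of $\omega$ in the framing $\tau_L$ coincides with the local coefficient of $\omega\otimes\onep$ in the framing $\hat\omega_0$. Second, the admissible metric on $\Ocal(p)$ is characterized (up to a normalizing constant) by $\|\onep(z)\|^2=G(z,p)^2$, so by \eqref{E:metric} and the assumed locally Euclidean coordinate near $p$,
$$
\|\onep\|^2\big|_{|z|=\varepsilon_k}=\rho_{\smallAr}(p)\,\varepsilon_k^2\bigl(1+O(\varepsilon_k)\bigr).
$$
Since the admissible metric on $L(p)$ decomposes as $\hat h_{L(p)}=h_L\otimes h_{\Ocal(p)}$, this is precisely the pointwise ratio between the hermitian metrics on $L(p)$ and on $L$ along the boundary circle.

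For the first asymptotic I would write $\hat\omega_j=g_j\hat\omega_0$ near $p$ and, by the first structural fact, also $\omega_j=g_j\tau_L$ with the same holomorphic function $g_j$ (vanishing to order $\nu_j+1$ at $p$). The entries of the relevant matrices for $1\le i,j\le m-1$ then read
\begin{align*}
(\widehat\Phi_i'',\widehat\Phi_j'')_{\partial R_{\varepsilon_k}} &= 2\int_{|z|=\varepsilon_k}ds_{\rho_{\smallAr}}\,\hat h(\hat\omega_0,\hat\omega_0)\,\imag(g_i)\imag(g_j), \\
(\Phi_i'',\Phi_j'')_{\partial R_{\varepsilon_k}} &= 2\int_{|z|=\varepsilon_k}ds_{\rho_{\smallAr}}\, h(\tau_L,\tau_L)\,\imag(g_i)\imag(g_j),
\end{align*}
which differ entrywise by the uniform factor $\rho_{\smallAr}(p)\varepsilon_k^2\bigl(1+O(\varepsilon_k)\bigr)$, producing $(\rho_{\smallAr}(p)\varepsilon_k^2)^{m-1}$ from the leading $(m-1)\times(m-1)$ block. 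To handle the extra row and column of index $m+1$ on the $L(p)$ side, I would use that $\widehat\Phi_m''=0$ (the coefficient of $\hat\omega_0$ in its own framing is the real constant $1$): cofactor expansion along the zero $m$-th row collapses $\det^*$ to $(-1)$ times the product of this $(m-1)\times(m-1)$ block with the diagonal entry
$$
(\widehat\Phi_{m+1}'',\widehat\Phi_{m+1}'')_{\partial R_{\varepsilon_k}}=2\int_{|z|=\varepsilon_k}ds_{\rho_{\smallAr}}\,\hat h(\hat\omega_0,\hat\omega_0)\sim 2\pi\varepsilon_k\|\widehat\Phi_m(p)\|^2,
$$
and the off-diagonal entries in column $m+1$ enter only at subleading order because $g_i(p)=0$ for $i<m$. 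This yields the first formula.

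The second identity is proved in the same spirit but for the Alvarez-deformed sections. One checks that $\hat\omega_{i,\varepsilon_k}=\omega_{i,\varepsilon_k}\otimes\onep$ still satisfies the Alvarez boundary condition, and by the same coefficient-matching the corresponding sections on $L$ and $L(p)$ have identical local expressions. The boundary inner products then again differ pointwise by the metric ratio $\|\onep\|^2\sim\rho_{\smallAr}(p)\varepsilon_k^2$, contributing one such factor per matched pair of sections. The Alvarez basis for $L(p)$ carries one additional section corresponding to the global holomorphic section $\hat\omega_0$, with no analogue on $L$; cofactor expansion separating its row and column from the $(m+1)\times(m+1)$ matrix contributes a further factor of the same form, yielding $(\rho_{\smallAr}(p)\varepsilon_k^2)^m$ in total.

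The main obstacle will be careful control of the error terms. The sections $\omega_{i,\varepsilon_k}$ furnished by Lemma \ref{lem:degeneration_kernels1} only agree with $\omega_i$ up to $O(\varepsilon_k^{\nu_i})$ uniformly on $R_{\varepsilon_k}$, while the boundary integrals themselves carry oscillatory factors of various powers of $\varepsilon_k$; verifying that all correction terms are subdominant compared to the claimed leading behavior requires careful bookkeeping of the relative orders of vanishing of the $g_i$. The sign conventions implicit in $\det^*$ (traceable to the zero $m$-th row of the $(m+1)\times(m+1)$ matrix $(\widehat\Phi_i'',\widehat\Phi_j'')$) are the source of the overall minus sign noted in the statement.
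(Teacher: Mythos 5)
Your strategy is essentially the paper's, repackaged. The paper proves the second estimate by expanding the local coefficients $f_{2i,\varepsilon_k}(z)=\sum_n a^{(i)}_{n,\varepsilon_k}z^n$, using the reality condition $\imag(f_{2i,\varepsilon_k})\bigr|_{|z|=\varepsilon_k}=0$ (equivalently $a^{(i)}_{-n,\varepsilon_k}=\bar a^{(i)}_{n,\varepsilon_k}\varepsilon_k^{2n}$) together with Lemma \ref{lem:degeneration_kernels1} to show that, after rescaling rows and columns by the orders of vanishing, both boundary Gram matrices converge to the \emph{same} nonsingular limit, the two computations differing precisely by the factor $\Vert\hat\omega_0(p)\Vert^2$ versus $\Vert\hat\omega_0(p)\Vert^2/(\rho_{\smallAr}(p)\varepsilon_k^2)$ --- that is, by the value of $\Vert\onep\Vert^2$ on the circle. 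Your two ``structural facts'' (equal coefficient functions in the two framings; pointwise metric ratio $\Vert\onep\Vert^2\simeq\rho_{\smallAr}(p)\varepsilon_k^2$) are exactly this mechanism, and your handling of the extra row and column coming from $\jmath(\hat\omega_0)$ and $\jmath(i\hat\omega_0)$ supplies what the paper leaves implicit in ``the proof of the first estimate is similar.''

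The substantive point you defer is, however, the entire analytic content of the paper's proof. An entrywise relation of the form $\widehat{G}_{ij}\simeq\rho_{\smallAr}(p)\varepsilon_k^2\,G_{ij}$ does not by itself yield $\det\widehat{G}\simeq(\rho_{\smallAr}(p)\varepsilon_k^2)^m\det G$: the error in an off-diagonal entry is controlled by Cauchy--Schwarz only relative to $(G_{ii}G_{jj})^{1/2}$, not relative to $G_{ij}$, and the entries have very different orders in $\varepsilon_k$. To pass to determinants one must normalize by the orders of vanishing and verify that the normalized Gram matrix tends to a nonsingular limit; that is exactly what the Laurent-mode computation with the reality condition delivers, and it needs to be carried out rather than labeled ``bookkeeping.'' A second, smaller instance of the same issue: for the cross terms $(\widehat\Phi_i'',\widehat\Phi_{m+1}'')$ the bare bound $\imag(g_i)=O(\varepsilon_k^{\nu_i+1})$ gives only $O(1)$ after normalization, which is not subleading; you additionally need $\int_0^{2\pi}\imag(g_i)\,d\theta=0$ (mean value property, since $g_i(p)=0$) so that only the $O(\varepsilon_k)$ oscillation of $\Vert\hat\omega_0\Vert^2$ contributes and the normalized cross term is $O(\varepsilon_k)$. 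With those two verifications added, your argument is a correct and somewhat more transparent account of where each factor in the lemma comes from.
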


\begin{proof}  
It suffices to prove an estimate for $\det\langle\omega_{2i,\varepsilon_k},\omega_{2j,\varepsilon_k}\rangle_{{\partial R}_{\varepsilon_k}}$.  
Write the expansion
$$
f_{2i,\varepsilon_k}(z)=\sum_{n\in\Z} a^{(i)}_{n,\varepsilon_k} z^n
$$
Note that the condition $\imag(f_{2i,\varepsilon})\bigr|_{|z|=\varepsilon_k}=0$ 
 implies $a^{(i)}_{-n,\varepsilon_k}=\bar a^{(i)}_{n,\varepsilon_k}
\varepsilon_k^{2n}$, for $n\geq 0$.
By Lemma \ref{lem:degeneration_kernels1}, we have $|a^{(i)}_{\nu_i,\varepsilon_k}|\neq 0$ and
$|\varepsilon_k^{n-\nu_i} a^{(i)}_{n,\varepsilon_k}|\to 0$, for $n\neq \nu_i$.
Hence,
\begin{align*}
\langle\omega_{2i,\varepsilon_k},\omega_{2j,\varepsilon_k}
\rangle_{{\partial R}_{\varepsilon_k}}
&\simeq \sum_{\ell,n}\int_0^{2\pi}\, a^{(i)}_{\ell,\varepsilon_k}
 \bar a^{(j)}_{n,\varepsilon_k} z^\ell\bar z^n \varepsilon_k\, d\theta
 \times \frac{\Vert\hat\omega_0(p)\Vert^2}{(\rho_{\smallAr}(p)\varepsilon_k^2)}
\\
&\simeq 2\pi\varepsilon_k^{-1} \sum_{n\in \Z} a^{(i)}_{n,\varepsilon_k}
 \bar a^{(j)}_{n,\varepsilon_k} \varepsilon_k^{2n} 
 \times \frac{\Vert\hat\omega_0(p)\Vert^2}{\rho_{\smallAr}(p)}
\\
\varepsilon_k^{-\nu_i-\nu_j+1}\langle\omega_{2i,\varepsilon_k},\omega_{2j,
\varepsilon_k}\rangle_{{\partial R}_{\varepsilon_k}}
&\simeq 2\pi \sum_{n\in \Z} \varepsilon_k^{n-\nu_i} 
 a^{(i)}_{n,\varepsilon_k} \varepsilon_k^{n-\nu_j} \bar a^{(j)}_{n,\varepsilon_k}  
 \times \frac{\Vert\hat\omega_0(p)\Vert^2}{\rho_{\smallAr}(p)}
\\
&\simeq 2\pi |a^{(i)}_{\nu_i,0}|^2\delta_{ij}
 \times \frac{\Vert\hat\omega_0(p)\Vert^2}{\rho_{\smallAr}(p)}
+o(1)
\end{align*}
Similarly, 
$$
\varepsilon_k^{-\nu_i-\nu_j-1}\langle\hat\omega_{2i,\varepsilon_k},\hat\omega_{2j,
\varepsilon_k}\rangle_{{\partial R}_{\varepsilon_k}}
\simeq 2\pi |a^{(i)}_{\nu_i,0}|^2\delta_{ij}
 \times \Vert\hat\omega_0(p)\Vert^2
+o(1)
$$
The second estimate in the lemma follows from this.
The proof of the first estimate is similar.
\end{proof}

We now turn to the proof of Theorem \ref{thm:insertion}. 
On the one hand, for $L$ we may apply \eqref{eqn:sep_curve} to get
\begin{equation}
\left[\frac{{\Det}^\ast\dl}{\det(\Phi_i, \Phi_j)}\right]_M=c_Q\, 
\left[\frac{{\Det}^\ast\dla}{\det(\phia_{i,\varepsilon_k},\phia_{j,\varepsilon_k})}\right]_{R_{\varepsilon_k}}
\biggl[ {\Det}^\ast D^{\smallA}_K\biggr]_{B_{\varepsilon_k}} 
\frac{{\det}(\phia_{i,\varepsilon_k},\phia_{j,\varepsilon_k})_\Gamma }{{\det}(\Phi_i'',\Phi_j'')_\Gamma}\, {\Det}_Q^\ast \N_\Gamma
\end{equation}
On the other hand, from Theorem \ref{thm:bfk_adapted} applied to $L(p)$, we obtain
\begin{equation}
\left[\frac{{\Det}^\ast D_{L(p)}}{\det(\widehat\Phi_i, \widehat\Phi_j)}\right]_M=c_Q\, 
\left[\frac{{\Det}^\ast D^{\smallA}_{L(p)}}{\det(\widehat\Phi^{\smallA}_{i,\varepsilon_k},\widehat\Phi^{\smallA}_{j,\varepsilon_k})}\right]_{R_{\varepsilon_k}}
\left[ \frac{ {\Det}^\ast D^{\smallA}_\Ocal}{ \Vert \widehat\Phi_m\Vert^2       }\right]_{B_{\varepsilon_k}} 
\frac{{\det}(\widehat\Phi^{\smallA}_{i,\varepsilon_k},\widehat\Phi^{\smallA}_{j,\varepsilon_k})_\Gamma }{{\det}^\ast(\widehat\Phi_i'',\widehat\Phi_j'')_\Gamma}\, {\Det}_Q^\ast \widehat\N_\Gamma
\end{equation}
Now on $R_{\varepsilon_k}$, the framed bundles $L$ and $L(p)$ are isomorphic, and by Lemma \ref{lem:liouville}
$$
\left[\frac{{\Det}^\ast\dla}{\det(\phia_{i,\varepsilon_k},\phia_{j,\varepsilon_k})}\right]_{R_{\varepsilon_k}}\simeq
\left[\frac{{\Det}^\ast D^{\smallA}_{L(p)}}{\det(\widehat\Phi^{\smallA}_{i,\varepsilon_k},\widehat\Phi^{\smallA}_{j,\varepsilon_k})}\right]_{R_{\varepsilon_k}}
$$
 By Proposition \ref{prop:serre},  ${\Det}^\ast D^{\smallA}_K={\Det}^\ast D^{\smallA}_\Ocal$ on the disk.  Applying Propositions \ref{prop:dn_pinch_one} and \ref{prop:dn_pinch_two} (using Lemma \ref{lem:degeneration_cokernel} and noting that $h^0(L(p))=h^0(L)+1$), 
\begin{align*}
c_Q{\Det}_Q^\ast \N_\Gamma &\simeq (2/\varepsilon_k)2^{-4h^0(L(p))+2} \\
c_Q{\Det}_Q^\ast \widehat\N_\Gamma &\simeq  2^{-4h^0(L(p))+2} 
\end{align*}
Hence,

$$
\left[\frac{{\Det}^\ast D_{L(p)}}{\det(\widehat\Phi_i, \widehat\Phi_j)}\right]_M
\simeq
 \frac{(\varepsilon_k/2)}{\Vert \widehat\Phi_m\Vert^2_{B_{\varepsilon_k}} }
\frac{{\det}(\widehat\Phi^{\smallA}_{i,\varepsilon_k},\widehat\Phi^{\smallA}_{j,\varepsilon_k})_\Gamma }{{\det}^\ast(\widehat\Phi_i'',\widehat\Phi_j'')_\Gamma}
\left[\frac{{\det}(\phia_{i,\varepsilon_k},\phia_{j,\varepsilon_k})_\Gamma }{{\det}(\Phi_i'',\Phi_j'')_\Gamma}\right]^{-1}
\left[\frac{{\Det}^\ast\dl}{\det(\Phi_i, \Phi_j)}\right]_M
$$
Finally, 
$$
\Vert \widehat\Phi_m\Vert^2_{B_{\varepsilon_k}}\simeq \pi\varepsilon_k^2\, 
\Vert\Phi_m(p)\Vert^2\, \rho_{\smallAr}(p) 
$$
Combining this with
Lemma \ref{lem:boundary_det} and letting $k\to \infty$, we have

$$
4\pi^2\Vert \widehat\Phi_0\Vert^4(p)\left[\frac{{\Det}^\ast D_{L(p)}}{\det(\widehat\Phi_i, \widehat\Phi_j)}\right]_M
=
\left[\frac{{\Det}^\ast\dl}{\det(\Phi_i, \Phi_j)}\right]_M
$$
The result now follows from Lemma \ref{lem:real_complex}.

\noindent \frenchspacing

\end{document}